\def\vbar{\mathchoice{\vrule height6.3ptdepth-.5ptwidth.8pt\kern- .8pt}
{\vrule height6.3ptdepth-.5ptwidth.8pt\kern-.8pt} {\vrule
height4.1ptdepth-.35ptwidth.6pt\kern-.6pt} {\vrule
height3.1ptdepth-.25ptwidth.5pt\kern-.5pt}}
\def\<{\langle}
\def\>{\rangle}
\def\a{\alpha}
\def\c{\cdot}
\def\nw{\nwarrow}
\def\ne{\nearrow}
\newtheorem{thm}{Theorem}[section]
\newtheorem{lem}[thm]{Lemma}
\newtheorem{cor}[thm]{Corollary}
\newtheorem{pro}[thm]{Proposition}
\newtheorem{rem}[thm]{Remark}
\theoremstyle{definition}
\newtheorem{defi}{Definition}[section]
\date{}
\begin{document}
\title{Nijenhuis Operators on $3$-Hom-L-dendriform algebras}
\author{A. Ben Hassine, T. Chtioui and  S. Mabrouk}
\author{\normalsize \bf  A. Ben Hassine\small{$^{1,2}$} \footnote { Corresponding author,  E-mail: benhassine.abdelkader@yahoo.fr}, T. Chtioui\small{$^{2}$} \footnote { Corresponding author,  E-mail: chtioui.taoufik@yahoo.fr},  S. Mabrouk\small{$^{3}$} \footnote { Corresponding author,  E-mail: mabrouksami00@yahoo.fr}}
\date{{\small{$^{1}$ Department of Mathematics, Faculty of Science and Arts at
Belqarn, P. O. Box 60, Sabt Al-Alaya 61985, University of Bisha, Kingdom of Saudi Arabia \\  \small{$^{2}$    Faculty of Sciences, University of Sfax,   BP
1171, 3000 Sfax, Tunisia \\  \small{$^{3}$} Faculty of Sciences, University of Gafsa,   BP
2100, Gafsa, Tunisia
 }}}} \maketitle

\maketitle
\begin{abstract}
The goal of this work is to introduce the notion of $3$-Hom-Lie-dendriform algebras which is the dendriform version of $3$-Hom-Lie-algebras. They can be also regarded as the ternary analogous of Hom-Lie-dendriform algebras. We give the representation  of a  $3$-Hom-pre-Lie algebra. Moreover, we introduce the notion of Nijenhuis operators on a $3$-Hom-pre-Lie algebra and provide some constructions of $3$-Hom-Lie-dendriform algebras in term of  Nijenhuis operators. Parallelly, we introduce the notion of a product and complex structures
on a $3$-Hom-Lie-dendriform algebras and there are also four types special integrability conditions.
\end{abstract}

\noindent\textbf{Keywords:} $3$-Hom-L-dendriform algebras, $\mathcal{O}$-operators,  symplectic structure, Nijenhuis operator, complex structure, product structure.\\
\noindent{\textbf{MSC(2010):}}  17A40; 17B70.
\tableofcontents
\section*{Introduction}
Hom-Lie algebras were introduced in the context of q-deformation of Witt and Virasoro algebras. In a
sequel, various concepts and properties have been derived to the framework of other hom-algebras as well.
The study of Hom-algebras appeared extensively in the work of J. Hartwig, D. Larsson, A. Makhlouf, S. Silvestrov, D. Yau and other authors (\cite{Ammar&Ejbehi&Makhlouf,Hartwig&Larsson&Silvestrov,Makhlouf&Silvestrov2008,Makhlouf&Silvestrov2010,Yau2009}).
 Other types of algebras (such as associative, Leibniz,
Poisson, Hopf,. . .) twisted by homomorphisms have also been studied in the last few years.\\
Recently, there have been several interesting developments of Hom-Lie algebras in mathematics and mathematical physics, including Hom-Lie bialgebras \cite{chen&Wang&Zhang,Chen&Zheng&Zhang}, quadratic Hom-Lie
algebras \cite{Benayadi&Makhlouf}, involutive Hom-semigroups \cite{Zheng&Guo}, deformed vector fields and differential calculus \cite{Larsson&Silvestrov}, representations \cite{Sheng,{Zhao&Chen&Ma}}, cohomology and homology theory \cite{Ammar&Ejbehi&Makhlouf,Yau2009}, Yetter-Drinfeld
categories \cite{Wang&Wang}, Hom-Yang-Baxter equations \cite{Chen&Zhang,ChenZhang2018,Liu&Makhlouf&Menini&Panaite,Sheng,Yau2011}, Hom-Lie 2-algebras \cite{Sheng&Chen,Song&Tang},
$(m,n)-$Hom-Lie algebras \cite{Ma&Zheng}, Hom-left-symmetric algebras \cite{Makhlouf&Silvestrov2008} and enveloping algebras \cite{Guo&Zhang&Zheng}.\\
The twisted version of another type algebras, called dendriform algebras. These algebras were introduced by Loday as Koszul dual of associative dialgebras \cite{Loday2001}. Free dendriform algebra over a vector space has been constructed by using planar binary trees. Dendriform algebras also arise from Rota-Baxter operators on some associative algebra \cite{Aguiar200}. Recently, an
explicit cohomology theory for dendriform algebras has been introduced and the formal deformation
theory for dendriform algebras (as well as coalgebras) has been studied in \cite{Das1,Das2}. The cohomology
involves certain combinatorial maps. Some results in 3-L-dendriform algebras are given in \cite{Chtioui&Mabrouk}.

The twisted version of dendriform structures, called Hom-dendriform structures was introduced
in \cite{Makhlouf2012}. These algebras can be thought of as splitting of Hom-associative algebras. They also arise
from Rota-Baxter operator on Hom-associative algebras. In \cite{Ma&Zheng} the authors study Hom-dendriform
algebras from the point of view of monoidal categories. In \cite{Das2020} is given the twist of the construction of \cite{Das1} by a homomorphism to formulate a cohomology for Hom-dendriform algebras.

Deformations of n-Lie algebras have been studied from several aspects. See \cite{Azcrraga&Izquierdo,Figueroa}
for more details. In particular, a notion of a Nijenhuis operator on a 3-Lie algebra was
introduced in \cite{Zhang} in the study of the 1-order deformations of a 3-Lie algebra. But there
are some quite strong conditions in this definition of a Nijenhuis operator. In the case of
Lie algebras, one could obtain fruitful results by considering one-parameter infinitesimal
deformations, i.e. 1-order deformations. However, for n-Lie algebras, we believe that
one should consider (n - 1)-order deformations to obtain similar results. In \cite{Figueroa}, for $3-$
Lie algebras, the author had already considered 2-order deformations. For the case of pre-Lie algebras, the authors in \cite{Wang&Sheng&Bai&Liu} give the notion of Nijenhuis operator.\\
Thus it is time to study $3$-Hom-Lie-dendriform algebras, $3$-Hom-pre-Lie algebras and Nijenhuis operator. Similarly, we give the some proprieties of Nijenhuis operator on $3$-Hom-Lie-dendriform algebras and the relationship between the Nijenhuis operator on $3$-Hom-Lie-dendriform algebras and the Nijenhuis operator on $3$-Hom-pre-Lie algebras.

Product structures and complex structures on a $3$-Hom-Lie algebra
can be viewed as special Nijenhuis operator(\cite{ShengTang}).  These structures were considered by many authors from different points of view.  For example, A. Andrada and Aderi\'an studied Complexproduct structures on $6$-dimensional nilpotent Lie algebras(\cite{Andrada3}), A. Andrada, M. L. Barberis andI. Dotti studied Classification of abelian complex structures on $6$-dimensional Lie algebras(\cite{Andrada1}).A. Andrada, M. L. Barberis and I. Dotti studied Product structures on four dimensional solvable Lie algebras(\cite{Andrada2}).
\\
This paper is organized as follows: In Section 1, we recall the concepts of 3-Hom-Lie algebra and introduce the notion of $3-$Hom-pre-Lie algebra, representation of $3-$Hom-pre-Lie algebra. Some results and the definition of $3-$Hom-L-dendriform  are established
in Section 2. Section 3 is  dedicated to study the second order deformation of $3$-Hom-Lie-dendriform and introduce the
notion of Nijenhuis operator on $3-$Hom-Lie-dendriform, which could generate a trivial deformation. In the other part of this section we give some properties and results of Nijenhuis operators. Finally, we introduce the notion of
product and complex structures on a $3$-Hom-L-dendriform algebra and prove that a
it is the direct sum of two Hom-subalgebras (as vector spaces) if and only if there is a
product (resp. complex) structure on it. Next, we study several special product and
complex structures, which are called
strict product (complex) structures, abelian product (complex) structures,
strong abelian product (complex) structures and
perfect product (complex) structures, respectively.

All  vector spaces are considered over field $\mathbb{K}$ of characteristic $0$. Moreover  all ternary Hom-algebras are supposed multiplicative, that is the twisted map preserved ternary operations.

\section{Preliminaries and basics}
\label{sec:bas}

In this section, we give  some general results about $3$-Hom-Lie algebras   (see \cite{Ataguema&Makhlouf&Silvestrov}). First, recall that a {$3$-Hom-Lie algebra}  is a vector space $A$ together with a skew-symmetric linear map ($3$-Hom-Lie bracket) $[\cdot,\cdot,\cdot]:
\otimes^3 A\rightarrow A$ and a linear map $\alpha:A\rightarrow A$ such that the following fundamental identity (FI) holds:
 \begin{equation}\label{eq:de1}
 \begin{split}
&[\alpha(x_1),\alpha(x_2),[x_3,x_4,x_5]]=[[x_1,x_2,x_3],\alpha(x_4),\alpha(x_5)]\\
&+[\alpha(x_3),[x_1,x_2,x_4],\alpha(x_5)]+[\alpha(x_3),\alpha(x_4),[x_1,x_2,x_5]]
\end{split}
\end{equation}
for $x_i\in A, 1\leq i\leq 5$.
If $\a [\c,\c,\c]=[\c,\c,\c]\circ \a^{\otimes^3}$, we say that $A$ is multiplicative.
Define for any $x_1, x_2\in A$,  the linear map
\begin{equation}\label{eq:adjoint}
ad:A\wedge A\to gl(A), \quad ad(x_1,x_2)(x):=[x_1,x_2,x], \quad \forall x\in A.
\end{equation}
The identity \label{eq:de1} can be written as follows:
  \begin{align*}
ad(\alpha(x_1),\alpha(x_2))[x_3,x_4,x_5]=&[ad(x_1,x_2)(x_3),\alpha(x_4),\alpha(x_5)] +[\alpha(x_3),(x_1,x_2)(x_4),\alpha(x_5)] \\
+&[\alpha(x_3),\alpha(x_4),ad(x_1,x_2)(x_5)],
\end{align*}
for  all $x_1, x_2, x_3\in A$. A morphism of $3$-Hom-Lie algebras $\phi:(A_1,[\cdot,\cdot,\cdot]_{A_1},\alpha_1)\rightarrow(A_2,[\cdot,\cdot,\cdot]_{A_2},\alpha_2)$ is a linear map such that for all $x_1,x_2,x_3\in A$,
\begin{align*}
\phi([x_1,x_2,x_3]_{A_1})=&[\phi(x_1),\phi(x_2),\phi(x_3)]_{A_2},\\
\phi\circ\alpha_1=&\alpha_2\circ\phi.
\end{align*}
%
The notion of a representation of an $n$-Hom-Lie algebra was introduced in \cite{Ammar&mabrouk&makhlouf}.
\begin{defi}\label{defi:rep}
 Let $(A,[\cdot,\cdot,\cdot],\alpha)$ be a $3$-Hom-Lie algebra, $V$ be a vector space and $\phi:V\to V$ be a linear map. A representation of $A$ on $V$ with respect to $\phi$ is a  linear map $\rho: \wedge^2A\rightarrow gl(V)$ such that
\begin{eqnarray}
&&\phi\rho(x_1,x_2)=\rho(\alpha(x_1),\a(x_2))\phi,\\
&&\nonumber\rho (\alpha(x_1),\alpha(x_2))\rho(x_3,x_4)-\rho(\alpha(x_3),\alpha(x_4))\rho(x_1,x_2)\\
 &&\label{Rep3HomLie1}  =\rho([x_1,x_2,x_3],\alpha(x_4))\circ\phi-\rho([x_1,x_2,x_4],\alpha(x_3))\circ\phi,\\
&&\nonumber\rho ([x_1,x_2,x_3],\alpha(x_4))\circ\phi=\rho(\alpha(x_1),\alpha(x_2))\rho(x_3,x_4)\\
&&\label{Rep3HomLie2} +\rho(\alpha(x_2),\alpha(x_3))\rho(x_1,x_4)+\rho(\alpha(x_3),\alpha(x_1))\rho(x_2,x_4)
\end{eqnarray}
for $x_i\in A, 1\leq i\leq 4$. We say that $(V,\rho,\phi)$ is a representation of $A$ or $V$ is an $A$-module.
\end{defi}
The tuple $(A,ad,\a)$ is a representation of $A$ on itself which is called the adjoint  representation.

\begin{lem}
Let $(A,[\cdot,\cdot,\cdot],\alpha)$ be a $3$-Hom-Lie algebra, $V$  a vector space , $\phi\in End(V)$ and $\rho:
\wedge^2A\rightarrow  gl(V)$  a  linear
map. Then $(V,\rho,\phi)$ is a representation of $A$ if and only if there
is a $3$-Hom-Lie algebra structure $($called the semi-direct product$)$
on the direct sum $A\oplus V$ of vector spaces, defined by
\begin{align}
[x_1+v_1,x_2+v_2,x_3+v_3]_{A\oplus V}=&[x_1,x_2,x_3]+\rho(x_1,x_2)v_3+\rho(x_3,x_1)v_2+\rho(x_2,x_3)v_1, \\
\alpha_{A\oplus V}(x_1+v_1)=&\alpha(x_1)+\phi(v_1),
\end{align}
for $x_i\in A, v_i\in V, 1\leq i\leq 3$. We denote this semi-direct product  $3$-Hom-Lie algebra by $A\ltimes_\rho V.$
\end{lem}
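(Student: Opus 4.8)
The plan is to prove the equivalence by a direct verification that unwinds the definition of a $3$-Hom-Lie algebra structure on $A\oplus V$ applied to the bracket and twist map given in the statement, and then matches the resulting identities termwise against the three defining conditions of Definition~\ref{defi:rep}. Before anything else I would record the useful symmetry fact that the bracket $[x_1+v_1,x_2+v_2,x_3+v_3]_{A\oplus V}$ is skew-symmetric in its three arguments: this follows from skew-symmetry of $[\cdot,\cdot,\cdot]$ together with the cyclic arrangement $\rho(x_1,x_2)v_3+\rho(x_3,x_1)v_2+\rho(x_2,x_3)v_1$, since $\rho$ takes values on $\wedge^2 A$. So on both sides of the putative equivalence, skew-symmetry of the ternary operation is automatic and the only real content is the fundamental identity~\eqref{eq:de1} and multiplicativity of $\alpha_{A\oplus V}$.

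First I would check multiplicativity: $\alpha_{A\oplus V}[x_1+v_1,x_2+v_2,x_3+v_3]_{A\oplus V}=[\alpha_{A\oplus V}(x_1+v_1),\alpha_{A\oplus V}(x_2+v_2),\alpha_{A\oplus V}(x_3+v_3)]_{A\oplus V}$. The $A$-component gives exactly multiplicativity of $\alpha$ on $A$, which holds by hypothesis; the $V$-component gives $\phi\bigl(\rho(x_1,x_2)v_3+\cdots\bigr)=\rho(\alpha(x_1),\alpha(x_2))\phi(v_3)+\cdots$, and after using skew-symmetry to collect like terms this is precisely the first condition $\phi\rho(x_1,x_2)=\rho(\alpha(x_1),\alpha(x_2))\phi$. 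Next I would expand~\eqref{eq:de1} with $x_i$ replaced by $x_i+v_i$ for $1\le i\le 5$. The $A$-component reproduces~\eqref{eq:de1} for $A$ itself, which holds. The $V$-component is a sum of terms linear in exactly one $v_j$; I would group the terms according to which index $j$ the surviving $v_j$ carries. For the indices $j\in\{1,2\}$ appearing in the "outer" slots, the resulting relation, after using skew-symmetry and the already-established first condition to normalize the order of arguments, becomes condition~\eqref{Rep3HomLie1} (the bracket-type relation). For the indices $j\in\{3,4,5\}$ the grouping produces condition~\eqref{Rep3HomLie2} (the "cyclic sum" relation). Conversely, if $(V,\rho,\phi)$ is a representation, these same computations run backwards and show all instances of~\eqref{eq:de1} and multiplicativity hold on $A\oplus V$, so the semidirect product is a $3$-Hom-Lie algebra.

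The main obstacle is purely bookkeeping: the $V$-component of~\eqref{eq:de1} expands into roughly fifteen to twenty terms, and to recognize conditions~\eqref{Rep3HomLie1}--\eqref{Rep3HomLie2} inside this mess one must repeatedly commute arguments of $\rho$ (picking up signs) and use the first condition $\phi\rho(x,y)=\rho(\alpha x,\alpha y)\phi$ to move $\phi$ past $\rho$; the care needed to avoid sign errors and to verify that nothing is left over is where the argument's weight lies. A clean way to organize this is to fix the surviving index $j$, set all other $v_i=0$, and write out the five-term identity only for that $j$; doing this for one representative "outer" index (say $j=1$) and one representative "inner" index (say $j=5$) suffices, since the cases $j=2$ and $j\in\{3,4\}$ follow by the skew-symmetry in the respective pairs of slots. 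I expect that with this reduction the verification collapses to exactly~\eqref{Rep3HomLie1} and~\eqref{Rep3HomLie2}, with no residual constraints, giving the equivalence.
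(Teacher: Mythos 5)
Your proposal is correct and is essentially the intended argument: the paper states this lemma without proof, and your componentwise expansion of the fundamental identity on $A\oplus V$, grouped according to which $v_j$ survives, together with the observation that multiplicativity of $\alpha_{A\oplus V}$ encodes the condition $\phi\rho(x_1,x_2)=\rho(\alpha(x_1),\alpha(x_2))\phi$, is exactly the routine verification the paper itself carries out for the analogous semidirect product of $3$-Hom-pre-Lie algebras. One bookkeeping correction: the correspondence is the reverse of what you state --- the components in which $v_3$, $v_4$ or $v_5$ survives yield condition \eqref{Rep3HomLie1}, while those in which $v_1$ or $v_2$ survives yield condition \eqref{Rep3HomLie2}; your reduction to the two representative cases $j=1$ and $j=5$, justified by the sign-equivariance of the fundamental identity under permutations within the slots $\{1,2\}$ and $\{3,4,5\}$ (a consequence of the skew-symmetry of the bracket on $A\oplus V$ that you establish first), remains valid, and no residual constraints appear.
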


\begin{pro}\label{pro:rep properties}
Let $(V,\rho,\phi)$ be a representation of a $3$-Hom-Lie algebra $A$. Then the following identities hold:
{\small\begin{align}\label{pro:new-repn}
&(\rho([x_1,x_2,x_3],\alpha(x_4))-\rho([x_1,x_2,x_4],\alpha(x_3)])\nonumber\\&+\rho([x_1,x_3,x_4],\alpha(x_2))
    -\rho([x_2,x_3,x_4],\alpha(x_1)))\circ\phi=0,\\
&\rho(\alpha(x_1),\alpha(x_2))\rho(x_3,x_4)+\rho(\alpha(x_2),\alpha(x_3))\rho(x_1,x_4)+\rho(\alpha(x_3),\alpha(x_1))\rho(x_2,x_4)
\nonumber\\& +\rho(\alpha(x_3),\alpha(x_4))\rho(x_1,x_2)+\rho(\alpha(x_1),\alpha(x_4))\rho(x_2,x_3)+\rho(\alpha(x_2),\alpha(x_4))\rho(x_3,x_1)=0
\end{align}}
for $x_i\in A, 1\leq i\leq 4$.
\end{pro}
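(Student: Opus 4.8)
The plan is to first deduce \eqref{pro:new-repn} from the antisymmetry-type relation \eqref{Rep3HomLie1}, and then to obtain the second identity of the Proposition by expanding each bracket term of \eqref{pro:new-repn} via the Leibniz-type rule \eqref{Rep3HomLie2}.

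\emph{Step 1: the first identity.} Add to \eqref{Rep3HomLie1} the identity obtained from it by the substitution $x_1\mapsto x_3,\ x_2\mapsto x_4,\ x_3\mapsto x_1,\ x_4\mapsto x_2$. On the left-hand side the two copies cancel, since $\rho(\alpha(x_1),\alpha(x_2))\rho(x_3,x_4)-\rho(\alpha(x_3),\alpha(x_4))\rho(x_1,x_2)$ changes sign under this substitution; hence the sum of the two right-hand sides vanishes. Rewriting the brackets that come from the substituted copy by the cyclic symmetry of the skew-symmetric bracket, namely $[x_3,x_4,x_1]=[x_1,x_3,x_4]$ and $[x_3,x_4,x_2]=[x_2,x_3,x_4]$, this vanishing sum is exactly the left-hand side of \eqref{pro:new-repn}.

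\emph{Step 2: the second identity.} Let $\Sigma$ denote the left-hand side of the second displayed identity, a sum of six operator products of the form $\rho(\alpha(x_i),\alpha(x_j))\,\rho(x_k,x_l)$. Apply \eqref{Rep3HomLie2} to each of the four terms $\rho([x_1,x_2,x_3],\alpha(x_4))\circ\phi$, $\rho([x_1,x_2,x_4],\alpha(x_3))\circ\phi$, $\rho([x_1,x_3,x_4],\alpha(x_2))\circ\phi$ and $\rho([x_2,x_3,x_4],\alpha(x_1))\circ\phi$ occurring in the left-hand side of \eqref{pro:new-repn}. This turns that alternating sum into twelve products $\rho(\alpha(\cdot),\alpha(\cdot))\,\rho(\cdot,\cdot)$; normalising the order of the arguments by the skew-symmetry $\rho(x,y)=-\rho(y,x)$, these twelve terms pair up and collect into $2\Sigma$. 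Since \eqref{pro:new-repn} asserts that the alternating sum is zero and $\mathbb{K}$ has characteristic zero, we conclude $\Sigma=0$, i.e.\ the second identity.

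\emph{Expected difficulty.} Both identities are completely forced by the two defining relations of a representation; the only genuine work is the bookkeeping in Step 2 — verifying that the twelve expanded terms pair up exactly into $2\Sigma$ with nothing left over — which is routine but error-prone because of the sign changes caused by reordering the arguments of $\rho$, so some care is needed there.
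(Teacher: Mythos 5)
Your proof is correct: symmetrizing \eqref{Rep3HomLie1} under the interchange $(x_1,x_2)\leftrightarrow(x_3,x_4)$ and using that cyclic permutations leave the alternating bracket unchanged gives exactly \eqref{pro:new-repn}, and expanding its four terms via \eqref{Rep3HomLie2} and normalising the arguments of $\rho$ by skew-symmetry does collapse the twelve resulting products into twice the six-term sum, so that sum vanishes in characteristic $0$ (note also that no invertibility of $\phi$ is needed, since \eqref{Rep3HomLie2} already carries the composition with $\phi$ on its left-hand side). The paper states Proposition \ref{pro:rep properties} without any proof, so there is no argument to compare against; your derivation is the natural one directly from the two defining identities of a representation and fills that gap.
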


Let $(V,\rho,\phi)$ be a representation of a $3$-Hom-Lie algebra $(A,[\cdot,\cdot,\cdot],\a)$. In the sequel, we always assume that $\phi$ is invertible. Define $\rho^*: A\wedge A \longrightarrow gl(V^*)$ as usual by
$$\langle \rho^*(x,y)(\xi),u\rangle=-\langle\xi,\rho(x,y)(u)\rangle,\quad\forall x,y\in A,u\in V,\xi\in V^*.$$
However, in general $\rho^*$ is not a representation of $A$ anymore. To our knowledge, people need to add a very strong condition to obtain a representation on the dual space in the former study.  We recall the dual representation of a representation of a $3$-Hom-Lie algebra without any additional condition.

Define $\rho^\star:A \wedge A \longrightarrow gl(V^*)$ by
\begin{equation}\label{eq:new1}
 \rho^\star(x,y)(\xi):=\rho^*(\a(x),\a(y))\big{(}(\phi^{-2})^*(\xi)\big{)},\quad\forall x,y\in A,\xi\in V^*.
\end{equation}
More precisely, we have
\begin{eqnarray}\label{eq:new1gen}
\langle\rho^\star(x,y)(\xi),u\rangle=-\langle\xi,\rho(\a^{-1}(x),\a^{-1}(y))(\phi^{-2}(u))\rangle,\quad\forall x,y\in A, u\in V, \xi \in V^*.
\end{eqnarray}
\begin{lem}\label{lem:dualrep}
 Let $(V,\rho,\phi )$ be a representation of a 3-Hom-Lie algebra $(A,[\cdot,\cdot,\cdot],\a)$. Then $\rho^\star:A \longrightarrow gl(V^*)$ defined above by \eqref{eq:new1} is a representation of $(A,[\cdot,\cdot,\cdot],\a)$ on $V^*$ with respect to $(\phi^{-1})^*$.
\end{lem}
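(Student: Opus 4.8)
The plan is to verify, one at a time, the three defining relations of Definition~\ref{defi:rep} for the triple $(V^{*},\rho^{\star},(\phi^{-1})^{*})$. The common device will be to evaluate each relation on an arbitrary $\xi\in V^{*}$, pair the result with an arbitrary $u\in V$, and use the explicit formula~\eqref{eq:new1gen} to transport all the operators to the ``$V$-side''. There I can use that $(V,\rho,\phi)$ is a representation, the multiplicativity of $\alpha$, and the invertibility of $\phi$ and of $\alpha$ (the latter being implicit in~\eqref{eq:new1gen}); together these yield the conjugation rule $\rho(\alpha^{-k}(x),\alpha^{-k}(y))=\phi^{-k}\rho(x,y)\phi^{k}$ for all $k$, which is what makes the bookkeeping close up.

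First I would treat the twist compatibility $(\phi^{-1})^{*}\rho^{\star}(x_{1},x_{2})=\rho^{\star}(\alpha(x_{1}),\alpha(x_{2}))(\phi^{-1})^{*}$: pairing both sides with $u$ and applying~\eqref{eq:new1gen}, each side reduces to $-\langle\xi,\phi^{-2}\rho(x_{1},x_{2})\phi^{-1}(u)\rangle$ after one use of $\phi\rho(x_{1},x_{2})=\rho(\alpha(x_{1}),\alpha(x_{2}))\phi$, so this case is immediate. Next, for relation~\eqref{Rep3HomLie1} I would apply~\eqref{eq:new1gen} twice to get $\langle\rho^{\star}(a,b)\rho^{\star}(c,d)(\xi),u\rangle=\langle\xi,\rho(\alpha^{-1}(c),\alpha^{-1}(d))\phi^{-2}\rho(\alpha^{-1}(a),\alpha^{-1}(b))\phi^{-2}(u)\rangle$; using the conjugation rule, the left-hand side of~\eqref{Rep3HomLie1} for $\rho^{\star}$ (evaluated at $\xi$, paired with $u$) becomes a conjugate by powers of $\phi$ of $\rho(\alpha(x_{3}),\alpha(x_{4}))\rho(x_{1},x_{2})-\rho(\alpha(x_{1}),\alpha(x_{2}))\rho(x_{3},x_{4})$, which by~\eqref{Rep3HomLie1} for $\rho$ equals $-\rho([x_{1},x_{2},x_{3}],\alpha(x_{4}))\circ\phi+\rho([x_{1},x_{2},x_{4}],\alpha(x_{3}))\circ\phi$; on the other side, computing $\langle\rho^{\star}([x_{1},x_{2},x_{3}],\alpha(x_{4}))((\phi^{-1})^{*}\xi),u\rangle$ via~\eqref{eq:new1gen} together with the multiplicativity identity $\alpha^{-1}[x_{1},x_{2},x_{3}]=[\alpha^{-1}(x_{1}),\alpha^{-1}(x_{2}),\alpha^{-1}(x_{3})]$ produces exactly the same expression.

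Finally, for relation~\eqref{Rep3HomLie2} the same transport turns the cyclic sum $\sum\rho^{\star}(\alpha(x_{i}),\alpha(x_{j}))\rho^{\star}(x_{k},x_{4})$ into a conjugate by powers of $\phi$ of $\rho(\alpha(x_{3}),\alpha(x_{4}))\rho(x_{1},x_{2})+\rho(\alpha(x_{1}),\alpha(x_{4}))\rho(x_{2},x_{3})+\rho(\alpha(x_{2}),\alpha(x_{4}))\rho(x_{3},x_{1})$; by the second identity of Proposition~\ref{pro:rep properties} this equals $-\big(\rho(\alpha(x_{1}),\alpha(x_{2}))\rho(x_{3},x_{4})+\rho(\alpha(x_{2}),\alpha(x_{3}))\rho(x_{1},x_{4})+\rho(\alpha(x_{3}),\alpha(x_{1}))\rho(x_{2},x_{4})\big)$, which is $-\rho([x_{1},x_{2},x_{3}],\alpha(x_{4}))\circ\phi$ by~\eqref{Rep3HomLie2} for $\rho$; comparing with the transported left-hand side $\rho^{\star}([x_{1},x_{2},x_{3}],\alpha(x_{4}))\circ(\phi^{-1})^{*}$ then gives the claim. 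The only delicate point, and the main thing to get right, will be the bookkeeping of the powers of $\phi$ (and $\alpha$) produced by the iterated use of~\eqref{eq:new1gen} and of the conjugation rule, together with keeping track of which argument $\phi^{-2}$ acts on; once a consistent convention is fixed, each identity reduces mechanically to its counterpart for $\rho$, the last one additionally invoking Proposition~\ref{pro:rep properties}. If one wishes to avoid assuming $\alpha$ invertible, every occurrence of $\alpha^{-1}$ can be removed by rewriting $\phi^{-2}\rho(\alpha(x),\alpha(y))=\phi^{-1}\rho(x,y)\phi^{-1}$ directly from~\eqref{eq:new1}, and the three steps go through verbatim.
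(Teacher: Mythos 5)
Your proposal is correct and takes essentially the same route as the paper: evaluate on $\xi\in V^*$, pair with $u\in V$, transport everything to the $V$-side via \eqref{eq:new1gen}, and invoke the defining identities of $(V,\rho,\phi)$ — your explicit use of Proposition~\ref{pro:rep properties} for \eqref{Rep3HomLie2} is exactly the step the paper hides behind ``can be shown similarly''. One small bookkeeping slip in the twist-compatibility step: both sides reduce to $-\langle\xi,\rho(\a^{-1}(x_1),\a^{-1}(x_2))\phi^{-3}(u)\rangle=-\langle\xi,\phi^{-1}\rho(x_1,x_2)\phi^{-2}(u)\rangle$, not to $-\langle\xi,\phi^{-2}\rho(x_1,x_2)\phi^{-1}(u)\rangle$, but this does not affect the argument.
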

\begin{proof}
For any $x,y \in A, \xi \in V^*$ and $u \in V$, we have
\begin{align*}
\langle \rho^\star(\a(x),\a(y))((\phi^{-1})^*(\xi)),u \rangle=&
-\langle (\phi^{-1})^*(\xi),\rho(x,y)\phi^{-2}(u) \rangle \\
=&-\langle \xi,\rho(\a^{-1}(x),\a^{-1}(y))\phi^{-3}(u)\rangle
\end{align*}
and
\begin{align*}
\langle (\phi^{-1})^* \rho^\star(x,y)(\xi),u\rangle=&
\langle  \rho^\star(x,y)(\xi),\phi^{-1}(u) \rangle \\
=&-\langle \xi,\rho(\a^{-1}(x),\a^{-1}(y))\phi^{-3}(u)\rangle,
\end{align*}
which implies that $(\phi^{-1})^*\rho^\star(x,y)=\rho^\star(\alpha(x),\a(y))(\phi^{-1})^*$.
On the other hand,
\begin{align*}
&\langle  \rho^\star(\a(x_1),\a(x_2))\rho^\star(x_3,x_4) (\xi),u\rangle
-\langle\rho^\star(\a(x_3),\a(x_4))\rho^\star(x_1,x_2) (\xi),u\rangle \\
&=\langle \xi,\rho(\a^{-1}(x_3),\a^{-1}(x_4))\rho(\a^{-2}(x_1),\a^{-2}(x_2))\phi^{-4}(u)\rangle \\
&-   \langle \xi,\rho(\a^{-1}(x_1),\a^{-1}(x_2))\rho(\a^{-2}(x_3),\a^{-2}(x_4))\phi^{-4}(u)\rangle,
\end{align*}
\begin{align*}
&\langle \rho^\star([x_1,x_2,x_3],\a(x_4))(\phi^{-1})^* (\xi),u \rangle   -
\langle \rho^\star([x_1,x_2,x_4],\a(x_3))(\phi^{-1})^*(\xi),u \rangle \\
&=-\langle \xi,\rho([\a^{-2}(x_1),\a^{-2}(x_2),\a^{-2}(x_3)],\a^{-1}(x_4))\phi^{-3}(u) \rangle \\
&+\langle \xi, \rho([\a^{-2}(x_1),\a^{-2}(x_2),\a^{-2}(x_4)],\a^{-1}(x_3))\phi^{-3}(u) \rangle.
\end{align*}
Since $(\rho,\phi)$ is a representation, then we obtain
\begin{align*}
& \rho^\star(\a(x_1),\a(x_2))\rho^\star(x_3,x_4)- \rho^\star(\a(x_3),\a(x_4))\rho^\star(x_1,x_2) \\
=&\rho^\star([x_1,x_2,x_3],\a(x_4))(\phi^{-1})^* -
\rho^\star([x_1,x_2,x_4],\a(x_3))(\phi^{-1})^*.
\end{align*}
The identity \eqref{Rep3HomLie2} can be shown similarly. Therefore $\rho^\star$  is a representation of $A$ on $V^*$ with respect to $(\phi^{-1})^*$.
\end{proof}

The tuple $(V^*,\rho^\star, (\phi^{-1})^*)$ is called the dual representation of $(V,\rho,\phi)$ for the $3$-Hom-Lie algebra  $(A,[\cdot,\cdot,\cdot],\a)$.

\begin{cor}
Let $(A,[\c,\c,\c],\a)$ be a $3$-Hom-Lie algebra. Then $ad^\star: A \to gl(A^*)$ defined by
\begin{align}\label{coadjoint rep}
\langle ad^\star(x,y)(\xi),z\rangle=-\langle\xi,ad(\a^{-1}(x),\a^{-1}(y))(\a^{-2}(z))\rangle,\quad\forall x,y,z\in A, \xi \in A^*
\end{align}
is a representation of  $(A,[\c,\c,\c],\a)$ on $A^*$ with respect to $(\a^{-1})^*$, which is called the coadjoint representation.
\end{cor}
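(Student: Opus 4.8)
The plan is to obtain this as an immediate specialization of Lemma~\ref{lem:dualrep}. Recall that right after Definition~\ref{defi:rep} it was observed that the tuple $(A,ad,\a)$ is a representation of $A$ on itself, the adjoint representation. Since we are writing $\a^{-1}$ in the statement, $\a$ is assumed invertible, so Lemma~\ref{lem:dualrep} applies to the particular representation $(V,\r,\phi)=(A,ad,\a)$ and yields that the dual construction $(V^*,\r^\star,(\phi^{-1})^*)=(A^*,ad^\star,(\a^{-1})^*)$ is again a representation of $(A,[\c,\c,\c],\a)$.

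What remains is purely bookkeeping: one has to check that formula \eqref{coadjoint rep} is exactly the instance of \eqref{eq:new1gen} (equivalently \eqref{eq:new1}) under the substitution $\r=ad$, $\phi=\a$. Indeed, taking $u=z\in A$ in \eqref{eq:new1gen} with these choices gives
$$\langle ad^\star(x,y)(\xi),z\rangle=-\langle\xi,ad(\a^{-1}(x),\a^{-1}(y))(\a^{-2}(z))\rangle,$$
which is precisely \eqref{coadjoint rep}. Hence $ad^\star$ defined by \eqref{coadjoint rep} coincides with the dual representation of the adjoint representation, and the conclusion follows.

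There is essentially no obstacle: the entire analytic content — verifying the intertwining relation with $(\a^{-1})^*$ and the two representation axioms \eqref{Rep3HomLie1}, \eqref{Rep3HomLie2} for $ad^\star$ — is already carried out in the proof of Lemma~\ref{lem:dualrep}, so the corollary is just that lemma read off for the adjoint module. If one preferred a self-contained argument, one could instead plug $\r=ad$ directly into Definition~\ref{defi:rep} and verify the three identities using the fundamental identity \eqref{eq:de1} and the multiplicativity of $\a$; but this merely reproduces the computation in Lemma~\ref{lem:dualrep} and is not needed. The only hypothesis worth flagging is the invertibility of $\a$, required so that $\a^{-1}$ and $\a^{-2}$ are defined, consistent with the standing assumption in the excerpt that the twisting map of a representation is invertible.
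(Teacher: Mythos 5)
Your proposal is correct and matches the paper's (implicit) argument: the corollary is stated immediately after Lemma~\ref{lem:dualrep} with no separate proof, precisely because it is the lemma specialized to the adjoint representation $(A,ad,\a)$, with \eqref{coadjoint rep} being the instance of \eqref{eq:new1gen} for $\r=ad$, $\phi=\a$. Your remark on the needed invertibility of $\a$ is consistent with the paper's standing assumption that the twisting map of the representation is invertible.
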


In the next section, we recall the definition of $3$-Hom-pre-Lie algebras and give construction result in terms of  {$\mathcal O$-operators}  on $3$-Hom-Lie algebras $($see \cite{Guo&Zhang&Wang}$)$. The notion of $\mathcal{O}$-operator is useful tool for the construction of the solutions of the classical Yang-Baxter equation.
Let $(A,[\cdot,\cdot,\cdot],\alpha)$ be a $3$-Hom-Lie algebra and $(V,\rho,\phi)$
a representation.  A linear operator $T:V\rightarrow A$ is called
an $\mathcal O$-operator associated to $( V,\rho,\phi)$ if $T$
satisfies
\begin{align}\label{eq:Ooperator}
 [Tu,Tv,Tw]=&T\big(\rho(Tu,Tv)w+\rho(Tv,Tw)u+\rho(Tw,Tu)v\big),\quad \forall u,v,w\in V, \\
 T \phi=& \alpha  T.
\end{align}
A linear map $\mathcal R:A\to A$ is called a Rota-Baxter operator if its an $\mathcal O$-operator associated to the adjoint representation. That is $\mathcal R\a=\a R$ and for any $x,y,z\in A$ we have
$$[\mathcal R(x),\mathcal R(y),\mathcal R(z)]=\mathcal R([\mathcal R(x),\mathcal R(y),z]+[\mathcal R(x),y,\mathcal R(z)]+[x,\mathcal R(y),\mathcal R(z)]).$$

\section{Representations of $3$-Hom-pre-Lie algebras}
In this section, we introduce the notion of a representation of a $3$-Hom-pre-Lie algebra, construct the corresponding semidirect product $3$-Hom-Lie algebra and give the dual representation  of a  given representation without any additional condition.
\begin{defi}(\cite{Guo&Zhang&Wang})
Let $A$ be a vector space with a linear map $\{\cdot,\cdot,\cdot\}:A\otimes A\otimes A\rightarrow A$ and $\alpha:A\rightarrow A$ is a linear map.
The triplet $(A,\{\cdot,\cdot,\cdot\},\alpha)$ is called a $3$-Hom-pre-Lie algebra if the following identities hold:
\begin{eqnarray}
\{x,y,z\}&=&-\{y,x,z\},\label{3-pre-Lie 0}\\
\nonumber\{\alpha(x_1),\alpha(x_2),\{x_3,x_4,x_5\}\}&=&\{[x_1,x_2,x_3]^C,\alpha(x_4),\alpha(x_5)\}+\{\alpha(x_3),[x_1,x_2,x_4]^C,\alpha(x_5)\}\\
&&+\{\alpha(x_3),\alpha(x_4),\{x_1,x_2,x_5\}\},\label{3-hom-pre-Lie 1}\\
\nonumber\{ [x_1,x_2,x_3]^C,\alpha(x_4), \alpha(x_5)\}&=&\{\alpha(x_1),\alpha(x_2),\{ x_3,x_4, x_5\}\}+\{\alpha(x_2),\alpha(x_3),\{ x_1,x_4,x_5\}\}\\
&&+\{\alpha(x_3),\alpha(x_1),\{ x_2,x_4, x_5\}\},\label{3-hom-pre-Lie 2}
\end{eqnarray}
 where $x,y,z, x_i\in A, 1\leq i\leq 5$ and $[\cdot,\cdot,\cdot]^C$ is defined by
\begin{equation}
[x,y,z]^C=\{x,y,z\}+\{y,z,x\}+\{z,x,y\},\quad \forall  x,y,z\in A.\label{eq:3cc}
\end{equation}
\end{defi}

 \begin{pro}(\cite{Guo&Zhang&Wang})
Let $(A,\{\cdot,\cdot,\cdot\},\alpha)$ be a $3$-Hom-pre-Lie algebra. Then the induced $3$-commutator given by Eq.~\eqref{eq:3cc} defines
a $3$-Hom-Lie algebra.
It  is called the  sub-adjacent $3$-Hom-Lie algebra of $(A,\{\cdot,\cdot,\cdot\},\alpha)$ and $(A,\{\cdot,\cdot,\cdot\},\alpha)$ is the compatible
$3$-Hom-pre-Lie algebra of  $(A,[\cdot,\cdot,\cdot]^C,\alpha)$.\\
The sub-adjacent $3$-Hom-Lie algebra $(A,\{\cdot,\cdot,\cdot\},\alpha)$ will be denoted by $A^c$.
\end{pro}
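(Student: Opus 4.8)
The plan is to verify directly that the $3$-commutator $[\cdot,\cdot,\cdot]^C$ defined by \eqref{eq:3cc} is skew-symmetric and satisfies the fundamental identity \eqref{eq:de1}, using only the axioms \eqref{3-pre-Lie 0}, \eqref{3-hom-pre-Lie 1}, \eqref{3-hom-pre-Lie 2} of the $3$-Hom-pre-Lie algebra together with multiplicativity of $\alpha$. Skew-symmetry is the easy part: from \eqref{eq:3cc} the bracket $[x,y,z]^C=\{x,y,z\}+\{y,z,x\}+\{z,x,y\}$ is manifestly invariant under cyclic permutations, so it suffices to check one transposition; swapping $x$ and $y$ and invoking \eqref{3-pre-Lie 0} on each of the three summands turns $[x,y,z]^C$ into $-[y,x,z]^C$, and combined with cyclicity this gives full skew-symmetry. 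Multiplicativity $\alpha([x,y,z]^C)=[\alpha(x),\alpha(y),\alpha(z)]^C$ is immediate since $\alpha\{x,y,z\}=\{\alpha(x),\alpha(y),\alpha(z)\}$ by hypothesis.

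The substantive step is the fundamental identity. I would start from the left-hand side $[\alpha(x_1),\alpha(x_2),[x_3,x_4,x_5]^C]^C$, expand the outer bracket via \eqref{eq:3cc} into three terms of the form $\{\alpha(x_1),\alpha(x_2),[x_3,x_4,x_5]^C\}$, $\{\alpha(x_2),[x_3,x_4,x_5]^C,\alpha(x_1)\}$, $\{[x_3,x_4,x_5]^C,\alpha(x_1),\alpha(x_2)\}$, and then expand each inner $[x_3,x_4,x_5]^C$ again, producing nine terms $\{?,?,\{x_i,x_j,x_k\}\}$ with various slot positions. The terms where $[x_3,x_4,x_5]^C$ sits in the third slot are handled by \eqref{3-hom-pre-Lie 1}, while the terms where it sits in the first slot are handled by \eqref{3-hom-pre-Lie 2} (after using \eqref{3-pre-Lie 0} to move arguments into the pattern of the axiom); the terms with $[x_3,x_4,x_5]^C$ in the middle slot are rewritten using skew-symmetry \eqref{3-pre-Lie 0} to reduce to one of the previous two cases. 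In parallel, I would expand the right-hand side of \eqref{eq:de1}, namely $[[x_1,x_2,x_3]^C,\alpha(x_4),\alpha(x_5)]^C + [\alpha(x_3),[x_1,x_2,x_4]^C,\alpha(x_5)]^C + [\alpha(x_3),\alpha(x_4),[x_1,x_2,x_5]^C]^C$, using \eqref{eq:3cc} on the outer brackets and then \eqref{3-hom-pre-Lie 1}--\eqref{3-hom-pre-Lie 2} on the resulting inner brackets $[\cdot,\cdot,\cdot]^C$. After all expansions both sides become a (large) alternating sum of triple-bracket monomials in the $\{\cdot,\cdot,\cdot\}$ operation applied to $\alpha(x_i)$'s and $\{x_j,x_k,x_l\}$'s; the goal is to match them term by term.

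The main obstacle is purely bookkeeping: organizing the roughly two dozen monomials on each side so that the cancellations coming from the two pre-Lie axioms, the skew-symmetry, and the cyclic structure of $[\cdot,\cdot,\cdot]^C$ line up correctly. A clean way to manage this is to group the monomials according to which pair among $\{x_1,x_2,x_3,x_4,x_5\}$ appears ``inside'' the innermost bracket $\{x_i,x_j,\cdot\}$ and which variable occupies the outermost free slot, so that applying \eqref{3-hom-pre-Lie 1} to the left-hand side exactly produces the collection of monomials that \eqref{3-hom-pre-Lie 2} extracts from the right-hand side. Since this proposition is quoted from \cite{Guo&Zhang&Wang}, I would present the verification in condensed form, carrying out the expansion of the left-hand side in full and indicating that the right-hand side expands symmetrically, leaving the final term-matching to the reader. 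No step presents a genuine conceptual difficulty once the skew-symmetry relation \eqref{3-pre-Lie 0} is used systematically to normalize the slot order of every monomial before comparison.
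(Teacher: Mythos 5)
The paper itself offers no proof of this proposition: it is imported verbatim from \cite{Guo&Zhang&Wang}, so there is no in-paper argument to compare against. Your plan is the standard direct verification, and its easy parts are complete and correct: cyclic invariance of \eqref{eq:3cc} plus the transposition flip coming from \eqref{3-pre-Lie 0} does give full skew-symmetry of $[\cdot,\cdot,\cdot]^C$, and multiplicativity of $\alpha$ for $[\cdot,\cdot,\cdot]^C$ follows at once from multiplicativity for $\{\cdot,\cdot,\cdot\}$. Your routing of the fundamental identity is also the right one: expand the outer $C$-brackets on both sides of \eqref{eq:de1} into cyclic sums, use \eqref{3-pre-Lie 0} to push any middle-slot occurrence of a $C$-bracket into the first slot, and then apply \eqref{3-hom-pre-Lie 1} to the third-slot occurrences and \eqref{3-hom-pre-Lie 2} to the first-slot occurrences; note that these applications do not reduce everything to ``atoms'' in one pass, since \eqref{3-hom-pre-Lie 1} reintroduces composite terms such as $\{[x_1,x_2,x_3]^C,\alpha(x_4),\alpha(x_5)\}$ and $\{\alpha(x_3),[x_1,x_2,x_4]^C,\alpha(x_5)\}$, which must be matched directly against the corresponding pieces of the expanded right-hand side rather than expanded further.

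The only reservation is that, as written, the decisive step is asserted rather than performed: the claim that ``the cancellations line up'' and the final term-matching being ``left to the reader'' is precisely where all the mathematical content of the proposition lives, so your text is a proof plan rather than a proof. Since the statement is a cited result, presenting it this way is defensible, but if you intend it as a self-contained verification you should either carry out the bookkeeping explicitly (grouping terms as you describe) or say plainly that you are deferring to the computation in \cite{Guo&Zhang&Wang}.
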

Let $(A,\{\cdot,\cdot,\cdot\},\alpha)$ be a $3$-Hom-pre-Lie algebra. Define the left multiplication $L:\wedge^2 A\longrightarrow gl(A)$ by
$L(x,y)z=\{x,y,z\}$ for all $x,y,z\in A$. Then $(A,L,\alpha)$ is a representation of the
$3$-Hom-Lie algebra $A^c$. Moreover, we define the right multiplication
$R:\otimes^2 A \to  gl(A)$ by $R(x,y)z=\{z,x,y\}$.

By the definition of  $3$-Hom-pre-Lie algebra and  the representation of a $3$-Hom-Lie algebra, we immediately obtain
\begin{pro}
If $A$ is  a vector space with linear maps
$\{\cdot,\cdot,\cdot\}:A\otimes A\otimes A\rightarrow A$ and $\alpha:A\rightarrow A$
satisfying
 Eq.~\eqref{3-hom-pre-Lie 1}. Then $(A,\{\cdot,\cdot,\cdot\},\alpha) $ is a $3$-Hom-pre-Lie algebra if $(A,[\cdot,\cdot,\cdot]^C,\alpha)$ is a $3$-Hom-Lie algebra and $(A,L,\alpha)$ gives a representation of it.
\end{pro}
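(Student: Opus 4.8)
The statement is the converse direction of the preceding proposition: given a vector space $A$ with a skew-symmetric-in-the-first-two-arguments operation $\{\cdot,\cdot,\cdot\}$ and a linear map $\alpha$ satisfying only \eqref{3-hom-pre-Lie 1}, together with the hypotheses that $[\cdot,\cdot,\cdot]^C$ defined by \eqref{eq:3cc} is a $3$-Hom-Lie bracket and that $(A,L,\alpha)$ is a representation of $(A,[\cdot,\cdot,\cdot]^C,\alpha)$, we must recover the remaining defining identity \eqref{3-hom-pre-Lie 2} (equations \eqref{3-pre-Lie 0} and \eqref{3-hom-pre-Lie 1} being part of the hypotheses already). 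So the core of the argument is: \emph{deduce \eqref{3-hom-pre-Lie 2} from the representation axioms for $(A,L,\alpha)$.}

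The plan is to unwind what it means for $(A,L,\alpha)$ to be a representation. Writing $L(x,y)z=\{x,y,z\}$, the axiom \eqref{Rep3HomLie2} of Definition~\ref{defi:rep} applied to $\rho=L$, $\phi=\alpha$ reads
\[
L([x_1,x_2,x_3]^C,\alpha(x_4))\circ\alpha
=L(\alpha(x_1),\alpha(x_2))L(x_3,x_4)+L(\alpha(x_2),\alpha(x_3))L(x_1,x_4)+L(\alpha(x_3),\alpha(x_1))L(x_2,x_4).
\]
Evaluating both sides on an element $x_5\in A$ and translating back through $L(x,y)z=\{x,y,z\}$, the left side becomes $\{[x_1,x_2,x_3]^C,\alpha(x_4),\alpha(x_5)\}$ (using multiplicativity $\alpha\{\cdot,\cdot,\cdot\}=\{\cdot,\cdot,\cdot\}\circ\alpha^{\otimes 3}$ is not even needed here since $\alpha$ lands inside), and the right side becomes exactly the right-hand side of \eqref{3-hom-pre-Lie 2}. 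Hence \eqref{Rep3HomLie2} for $(A,L,\alpha)$ \emph{is} precisely \eqref{3-hom-pre-Lie 2}. So the real work is to check that the first two representation axioms — the compatibility $\alpha L(x,y)=L(\alpha(x),\alpha(y))\alpha$ and the identity \eqref{Rep3HomLie1} — hold automatically, and then to observe that the list \eqref{3-pre-Lie 0}, \eqref{3-hom-pre-Lie 1}, \eqref{3-hom-pre-Lie 2} is exactly the definition. The compatibility $\alpha L(x,y)=L(\alpha(x),\alpha(y))\alpha$ is just multiplicativity of the $3$-Hom-pre-Lie product, which is part of our standing assumption that all ternary Hom-algebras are multiplicative, so it is free.

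First I would state explicitly that the hypotheses give \eqref{3-pre-Lie 0} (skew-symmetry, needed to even define $L$ on $\wedge^2 A$) and \eqref{3-hom-pre-Lie 1}; then I would verify \eqref{Rep3HomLie1} for $(A,L,\alpha)$. Here is where \eqref{3-hom-pre-Lie 1} enters: rewriting \eqref{3-hom-pre-Lie 1} with $L$-notation gives $L(\alpha(x_1),\alpha(x_2))L(x_3,x_4)z_5 - L(\alpha(x_3),[x_1,x_2,x_4]^C,\alpha(x_5))\cdots$ — more carefully, \eqref{3-hom-pre-Lie 1} is literally the statement that $L$ "acts as a derivation", and a short rearrangement using skew-symmetry and the Jacobi-type identity \eqref{eq:de1} for $[\cdot,\cdot,\cdot]^C$ turns it into \eqref{Rep3HomLie1}. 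Then, since all three representation axioms hold, Definition~\ref{defi:rep} tells us $(A,L,\alpha)$ is a representation — but we are \emph{given} that, so actually the logic runs the other way: from "$(A,L,\alpha)$ is a representation" we extract \eqref{Rep3HomLie2}, which is \eqref{3-hom-pre-Lie 2}, completing the list of axioms for a $3$-Hom-pre-Lie algebra.

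**Main obstacle.** There is essentially no analytic difficulty; the whole proof is bookkeeping in translating between the $\{\cdot,\cdot,\cdot\}$-language and the $\rho=L$, $\phi=\alpha$ language of Definition~\ref{defi:rep}. The one point that requires a little care is matching \eqref{Rep3HomLie1} — which involves a difference $\rho(\cdots)\rho(\cdots)-\rho(\cdots)\rho(\cdots)$ — against \eqref{3-hom-pre-Lie 1}, which is written as a single equation with three terms on the right; the reconciliation uses \eqref{3-pre-Lie 0} to swap $x_3\leftrightarrow x_4$ in one term and then \eqref{eq:de1} (the FI for $[\cdot,\cdot,\cdot]^C$) to absorb the bracket terms. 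I expect that half-page of index chasing to be the only place a reader could stumble; everything else is immediate from the definitions and the multiplicativity convention. Consequently the proof can be kept to a short paragraph: "Skew-symmetry and \eqref{3-hom-pre-Lie 1} are assumed; multiplicativity gives the $\alpha$-compatibility of $L$; \eqref{3-hom-pre-Lie 1} together with the FI for $[\cdot,\cdot,\cdot]^C$ gives \eqref{Rep3HomLie1}; hence $(A,L,\alpha)$ being a representation forces \eqref{Rep3HomLie2}, which is \eqref{3-hom-pre-Lie 2}, so $(A,\{\cdot,\cdot,\cdot\},\alpha)$ is a $3$-Hom-pre-Lie algebra."
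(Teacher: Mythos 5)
Your proposal is correct and follows essentially the same route the paper leaves implicit (``we immediately obtain''): skew-symmetry \eqref{3-pre-Lie 0} is encoded in $L$ being defined on $\wedge^2A$, \eqref{3-hom-pre-Lie 1} is assumed, and the representation axiom \eqref{Rep3HomLie2} for $\rho=L$, $\phi=\alpha$ evaluated on $x_5$ is literally \eqref{3-hom-pre-Lie 2}, which completes the list of defining identities. Your detour verifying the $\alpha$-compatibility of $L$ and \eqref{Rep3HomLie1} is superfluous (being a representation is a hypothesis, not something to be checked, and the latter identity needs only skew-symmetry and \eqref{3-hom-pre-Lie 1}, not the fundamental identity), but it does not affect the correctness of the argument.
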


New identities of 3-Hom-pre-Lie algebras can be derived from Proposition~\ref{pro:rep properties}. For example,

\begin{cor}
Let $(A,\{\cdot,\cdot,\cdot\},\alpha)$ be a $3$-Hom-pre-Lie algebra. Then the following identities hold:
{\small\begin{align}
&\nonumber\{ [x_1,x_2,x_3]^C,\alpha(x_4), \alpha(x_5)\}-\{[x_1,x_2,x_4]^C,\alpha(x_3), \alpha(x_5) \}\\
+&\{[x_1,x_3,x_4]^C,\alpha(x_2),\alpha( x_5)\}
-\{[x_2,x_3,x_4]^C,\alpha(x_1), \alpha(x_5)\}=0,\\
&\{\alpha(x_1),\alpha(x_2),\{x_3,x_4, x_5\}\}+\{\alpha(x_3),\alpha(x_4),\{x_1,x_2, x_5\}\}+\{\alpha(x_2),\alpha(x_4),\{x_3,x_1, x_5\}\}\nonumber\\
+&\{\alpha(x_3),\alpha(x_1),\{x_2,x_4, x_5\}\}+\{\alpha(x_2),\alpha(x_3),\{x_1,x_4, x_5\}\}+\{\alpha(x_1),\alpha(x_4),\{x_2,x_3, x_5\}\}=0,
\end{align}}
for $x_i\in A, 1\leq i\leq 5$.
\end{cor}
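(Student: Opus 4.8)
The plan is to derive both identities by specializing Proposition~\ref{pro:rep properties} to one particular representation, namely the left-multiplication representation $(A,L,\alpha)$ of the sub-adjacent $3$-Hom-Lie algebra $A^c=(A,[\cdot,\cdot,\cdot]^C,\alpha)$, where $L(x,y)z=\{x,y,z\}$. This is legitimate because, as noted above, $(A,L,\alpha)$ is indeed a representation of $A^c$; hence every general identity for representations applies here with $\rho$ replaced by $L$, the bracket $[\cdot,\cdot,\cdot]$ replaced by $[\cdot,\cdot,\cdot]^C$, and $\phi$ replaced by $\alpha$.

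For the first displayed identity, I would start from Eq.~\eqref{pro:new-repn}. After the above substitutions it becomes the operator identity
$$\Big(L([x_1,x_2,x_3]^C,\alpha(x_4))-L([x_1,x_2,x_4]^C,\alpha(x_3))+L([x_1,x_3,x_4]^C,\alpha(x_2))-L([x_2,x_3,x_4]^C,\alpha(x_1))\Big)\circ\alpha=0.$$
Evaluating this on an arbitrary $x_5\in A$ and using $L(a,b)\big(\alpha(x_5)\big)=\{a,b,\alpha(x_5)\}$ turns each summand into the corresponding bracket $\{[x_i,x_j,x_k]^C,\alpha(x_l),\alpha(x_5)\}$, which is exactly the first identity of the corollary. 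The one point to watch is that the right composition with $\phi=\alpha$ in~\eqref{pro:new-repn} is precisely what produces the inner $\alpha(x_5)$ appearing in the statement.

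For the second displayed identity I would use the second relation of Proposition~\ref{pro:rep properties}. With $\rho=L$ it reads as a vanishing sum of six compositions of the form $L(\alpha(x_i),\alpha(x_j))L(x_k,x_l)$; evaluating on an arbitrary $x_5\in A$ and unravelling $L(a,b)L(c,d)x_5=\{a,b,\{c,d,x_5\}\}$ yields a six-term sum that coincides with the one in the corollary after merely reordering the summands (the index patterns already agree, so not even the skew-symmetry~\eqref{3-pre-Lie 0} is needed here). Since both reductions are purely mechanical substitutions, there is essentially no genuine obstacle; the only care required is the bookkeeping of which $x_i$ sits inside the innermost bracket and which appears twisted by $\alpha$, so that the six terms are matched correctly.
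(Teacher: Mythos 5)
Your proposal is correct and is exactly the argument the paper intends: the corollary is stated as a direct specialization of Proposition~\ref{pro:rep properties} to the left-multiplication representation $(A,L,\alpha)$ of the sub-adjacent $3$-Hom-Lie algebra $A^c$, with $\rho=L$, $\phi=\alpha$ and the bracket $[\cdot,\cdot,\cdot]^C$, evaluated on $x_5$. Your bookkeeping of the composition with $\alpha$ producing $\alpha(x_5)$ and the term-by-term matching in the six-term identity are both accurate.
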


\begin{pro}(\cite{Guo&Zhang&Wang})\label{pro:3hompreLieT}
Let $(A,[\cdot,\cdot,\cdot],\alpha)$ be a $3$-Hom-Lie algebra and $(V,\rho,\phi)$ a representation. Suppose that the linear map $T:V\rightarrow A$ is an $\mathcal O$-operator associated
to $(V,\rho,\phi)$. Then $(V,\{\cdot,\cdot,\cdot\},\phi)$ is a $3$-Hom-pre-Lie algebra where the bracket $\{\cdot,\cdot,\cdot\}$ is given by
\begin{equation}
\{u,v,w\}=\rho(Tu,Tv)w,\quad\forall ~ u,v,w\in V.
\end{equation}
\end{pro}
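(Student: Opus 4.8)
The plan is to verify directly the four things that make $(V,\{\cdot,\cdot,\cdot\},\phi)$ a $3$-Hom-pre-Lie algebra: skew-symmetry \eqref{3-pre-Lie 0}, multiplicativity, and the two compatibility identities \eqref{3-hom-pre-Lie 1} and \eqref{3-hom-pre-Lie 2} (read with $\phi$ in place of $\alpha$, since $\phi$ is the twisting map of $V$). The skew-symmetry $\{u,v,w\}=-\{v,u,w\}$ is immediate: $\rho$ is defined on $\wedge^2 A$, so $\rho(Tu,Tv)=-\rho(Tv,Tu)$. Multiplicativity follows by combining the equivariance $\phi\rho(x_1,x_2)=\rho(\alpha(x_1),\alpha(x_2))\phi$ from Definition~\ref{defi:rep} with the $\mathcal{O}$-operator relation $T\phi=\alpha T$: indeed $\phi\{u,v,w\}=\phi\rho(Tu,Tv)w=\rho(\alpha Tu,\alpha Tv)\phi w=\rho(T\phi u,T\phi v)\phi w=\{\phi u,\phi v,\phi w\}$.

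The one conceptual ingredient is the intertwining property linking the induced $3$-commutator on $V$ with the bracket on $A$. Writing, as in \eqref{eq:3cc}, $[u,v,w]^C=\{u,v,w\}+\{v,w,u\}+\{w,u,v\}=\rho(Tu,Tv)w+\rho(Tv,Tw)u+\rho(Tw,Tu)v$, the defining identity \eqref{eq:Ooperator} of an $\mathcal{O}$-operator says exactly that $T([u,v,w]^C)=[Tu,Tv,Tw]$. This is the bridge that converts the representation axioms \eqref{Rep3HomLie1}--\eqref{Rep3HomLie2} for $(V,\rho,\phi)$ into the $3$-Hom-pre-Lie axioms.

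For \eqref{3-hom-pre-Lie 1}, I would expand every term of
$$\{\phi(u_1),\phi(u_2),\{u_3,u_4,u_5\}\}=\{[u_1,u_2,u_3]^C,\phi(u_4),\phi(u_5)\}+\{\phi(u_3),[u_1,u_2,u_4]^C,\phi(u_5)\}+\{\phi(u_3),\phi(u_4),\{u_1,u_2,u_5\}\}$$
using the definition of $\{\cdot,\cdot,\cdot\}$, then push the internal twists outward via $T\phi=\alpha T$ and $T([u_i,u_j,u_k]^C)=[Tu_i,Tu_j,Tu_k]$. Setting $x_i:=Tu_i$, the identity to be proved becomes
$$\rho(\alpha x_1,\alpha x_2)\rho(x_3,x_4)u_5=\rho([x_1,x_2,x_3],\alpha x_4)\phi u_5+\rho(\alpha x_3,[x_1,x_2,x_4])\phi u_5+\rho(\alpha x_3,\alpha x_4)\rho(x_1,x_2)u_5,$$
which is precisely \eqref{Rep3HomLie1} after rearranging and rewriting $-\rho([x_1,x_2,x_4],\alpha x_3)=\rho(\alpha x_3,[x_1,x_2,x_4])$ by skew-symmetry of $\rho$. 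Likewise, the same substitutions turn \eqref{3-hom-pre-Lie 2} (for $V$) into
$$\rho([x_1,x_2,x_3],\alpha x_4)\phi u_5=\rho(\alpha x_1,\alpha x_2)\rho(x_3,x_4)u_5+\rho(\alpha x_2,\alpha x_3)\rho(x_1,x_4)u_5+\rho(\alpha x_3,\alpha x_1)\rho(x_2,x_4)u_5,$$
which is \eqref{Rep3HomLie2} evaluated at $x_i=Tu_i$ and applied to $u_5$.

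There is no genuine obstacle here; once the relation $T([u,v,w]^C)=[Tu,Tv,Tw]$ is recorded, the argument is pure bookkeeping. The only points requiring care are keeping track of which slot carries the twist after expansion, and moving each internal $\phi(u_i)$ to the outside in the right order (using $T\phi=\alpha T$ together with $\phi\rho(x,y)=\rho(\alpha x,\alpha y)\phi$) so that everything collects cleanly on the last argument $u_5$. Assembling the four verifications — skew-symmetry, multiplicativity, \eqref{3-hom-pre-Lie 1}, and \eqref{3-hom-pre-Lie 2} — then yields that $(V,\{\cdot,\cdot,\cdot\},\phi)$ is a $3$-Hom-pre-Lie algebra.
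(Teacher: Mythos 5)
Your verification is correct: the identity $T([u,v,w]^C)=[Tu,Tv,Tw]$ supplied by the $\mathcal O$-operator condition, together with $T\phi=\alpha T$ and the equivariance of $\rho$, does reduce \eqref{3-hom-pre-Lie 1} and \eqref{3-hom-pre-Lie 2} for the bracket $\{u,v,w\}=\rho(Tu,Tv)w$ exactly to \eqref{Rep3HomLie1} and \eqref{Rep3HomLie2} evaluated at $x_i=Tu_i$ (using skew-symmetry of $\rho$ to flip $\rho(\alpha x_3,[x_1,x_2,x_4])$), and skew-symmetry and multiplicativity are immediate as you say. The paper itself states this proposition without proof, quoting it from \cite{Guo&Zhang&Wang}, and your argument is the standard direct verification that the cited source carries out, so there is nothing to reconcile beyond noting that your write-up fills in the omitted details correctly.
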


\begin{cor}
With the above conditions,  $(V,[\cdot,\cdot,\cdot]^C,\phi)$ is a $3$-Hom-Lie
algebra as the sub-adjacent $3$-Hom-Lie algebra of the $3$-Hom-pre-Lie
algebra given in Proposition \ref{pro:3hompreLieT}, and $T$ is a $3$-Hom-Lie algebra morphism from $(V,[\cdot,\cdot,\cdot]^C,\phi)$ to $(A,[\cdot,\cdot,\cdot],\alpha)$. Furthermore,
$T(V)=\{Tv|v\in V\}\subset A$ is a $3$-Hom-Lie subalgebra of $A$ and there is an induced $3$-Hom-pre-Lie algebra structure $\{\cdot,\cdot,\cdot\}_{T(V)}$ on
$T(V)$ given by
\begin{equation}
\{Tu,Tv,Tw\}_{T(V)}:=T\{u,v,w\},\quad\;\forall u,v,w\in V.
\end{equation}
\end{cor}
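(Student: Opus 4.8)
The plan is to treat the four assertions in order; three of them are formal consequences of Proposition~\ref{pro:3hompreLieT}, of the preceding proposition on sub-adjacent $3$-Hom-Lie algebras, and of the defining relations \eqref{eq:Ooperator} of an $\mathcal O$-operator, while the genuinely substantive point is the well-definedness of $\{\cdot,\cdot,\cdot\}_{T(V)}$.

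First, since Proposition~\ref{pro:3hompreLieT} already provides the $3$-Hom-pre-Lie algebra $(V,\{\cdot,\cdot,\cdot\},\phi)$ with $\{u,v,w\}=\rho(Tu,Tv)w$, applying the proposition on sub-adjacent algebras immediately gives that $(V,[\cdot,\cdot,\cdot]^C,\phi)$ is a $3$-Hom-Lie algebra; here, by \eqref{eq:3cc},
$$[u,v,w]^C=\rho(Tu,Tv)w+\rho(Tv,Tw)u+\rho(Tw,Tu)v .$$
Applying $T$ to this expression and invoking the $\mathcal O$-operator identity \eqref{eq:Ooperator} turns the right-hand side into $[Tu,Tv,Tw]$, so $T([u,v,w]^C)=[Tu,Tv,Tw]$; combined with $T\phi=\alpha T$ (the second defining relation of an $\mathcal O$-operator) this shows $T$ is a morphism of $3$-Hom-Lie algebras. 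From this the subalgebra assertion is immediate: $[Tu,Tv,Tw]=T([u,v,w]^C)\in T(V)$ and $\alpha(Tu)=T(\phi(u))\in T(V)$, so $T(V)$ is stable under the bracket of $A$ and under $\alpha$.

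The main work is the last assertion. I would show that $\ker T$ is an ideal of the $3$-Hom-pre-Lie algebra $(V,\{\cdot,\cdot,\cdot\},\phi)$. Stability under $\phi$ holds because $T\phi(w)=\alpha T(w)=0$ whenever $Tw=0$. If $Tu=0$ then $\{u,v,w\}=\rho(Tu,Tv)w=0$ and $\{v,u,w\}=\rho(Tv,Tu)w=0$ for all $v,w\in V$, so $\{\ker T,V,V\}=\{V,\ker T,V\}=\{0\}$. The only case needing the $\mathcal O$-operator equation is $\{V,V,\ker T\}\subseteq\ker T$: taking $w$ with $Tw=0$ in \eqref{eq:Ooperator} and using linearity of $\rho$ gives $0=[Tu,Tv,Tw]=T(\rho(Tu,Tv)w)=T\{u,v,w\}$, hence $\rho(Tu,Tv)w\in\ker T$. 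Therefore $\ker T$ is an ideal, the quotient $V/\ker T$ carries the induced $3$-Hom-pre-Lie structure, and the linear isomorphism $V/\ker T\to T(V)$ induced by $T$ transports this structure to exactly the operation $\{Tu,Tv,Tw\}_{T(V)}:=T\{u,v,w\}$ together with $\alpha|_{T(V)}$; in particular this operation is well defined and makes $T(V)$ a $3$-Hom-pre-Lie algebra. The one subtlety to be careful about is precisely this descent to the quotient — namely, that $T\{u,v,w\}$ depends only on $Tu,Tv,Tw$ and not on the chosen preimages — which is the content of the computation $T\{u,v,w\}=0$ whenever $Tw=0$; all the remaining verifications are straightforward substitutions into the identities already established.
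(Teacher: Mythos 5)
Your proof is correct. The paper states this corollary without proof (it is treated as an immediate consequence of Proposition \ref{pro:3hompreLieT} and the sub-adjacent construction), and your argument is precisely the intended one: the morphism property $T[u,v,w]^C=[Tu,Tv,Tw]$ and $T\phi=\alpha T$ follow directly from \eqref{eq:Ooperator}, and your verification that $\ker T$ is stable (so that $T\{u,v,w\}$ depends only on $Tu,Tv,Tw$) supplies exactly the well-definedness point the paper leaves implicit.
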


\begin{cor}\label{pro:preLieOoper}
Let $(A,[\cdot,\cdot,\cdot],\alpha)$ be a $3$-Hom-Lie algebra. Then there exists a compatible $3$-Hom-pre-Lie algebra if and only
if there exists an invertible $\mathcal O$-operator on $A$.
\end{cor}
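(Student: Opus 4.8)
The plan is to prove the two implications separately. The ``if'' direction is almost immediate, and the ``only if'' direction is a transport-of-structure argument built on Proposition~\ref{pro:3hompreLieT}.

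For the ``if'' direction, suppose $(A,\{\cdot,\cdot,\cdot\},\alpha)$ is a compatible $3$-Hom-pre-Lie algebra, so that $[\cdot,\cdot,\cdot]^C=[\cdot,\cdot,\cdot]$. I would use the left-multiplication representation $(A,L,\alpha)$ of the sub-adjacent $3$-Hom-Lie algebra $A^c$ recalled above; since $[\cdot,\cdot,\cdot]^C=[\cdot,\cdot,\cdot]$, this is a representation of $(A,[\cdot,\cdot,\cdot],\alpha)$ itself. Then $T=\mathrm{id}_A$ is invertible, satisfies $T\alpha=\alpha T$ trivially, and the $\mathcal O$-operator identity \eqref{eq:Ooperator} for $T=\mathrm{id}_A$ reduces to $[x,y,z]=L(x,y)z+L(y,z)x+L(z,x)y=\{x,y,z\}+\{y,z,x\}+\{z,x,y\}=[x,y,z]^C$, which holds by compatibility. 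Hence $\mathrm{id}_A$ is an invertible $\mathcal O$-operator on $A$.

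For the ``only if'' direction, let $T\colon V\to A$ be an invertible $\mathcal O$-operator associated to a representation $(V,\rho,\phi)$. By Proposition~\ref{pro:3hompreLieT}, $(V,\{\cdot,\cdot,\cdot\}_V,\phi)$ with $\{u,v,w\}_V=\rho(Tu,Tv)w$ is a $3$-Hom-pre-Lie algebra. I would transport this structure along the linear isomorphism $T$, defining
$$\{x,y,z\}:=T\{T^{-1}x,T^{-1}y,T^{-1}z\}_V=T\big(\rho(x,y)(T^{-1}z)\big),\qquad x,y,z\in A.$$
Transport of structure makes $(A,\{\cdot,\cdot,\cdot\},\alpha)$ a $3$-Hom-pre-Lie algebra: one uses $T\phi=\alpha T$, equivalently $\phi=T^{-1}\alpha T$, to check that $\alpha$ is the twisting map of the transported product, and the identities \eqref{3-hom-pre-Lie 1} and \eqref{3-hom-pre-Lie 2} for $\{\cdot,\cdot,\cdot\}$ are just the $T$-images of the corresponding identities for $\{\cdot,\cdot,\cdot\}_V$. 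It remains to verify compatibility: applying \eqref{eq:Ooperator} to $u=T^{-1}x$, $v=T^{-1}y$, $w=T^{-1}z$ yields
$$[x,y,z]=T\big(\rho(x,y)T^{-1}z+\rho(y,z)T^{-1}x+\rho(z,x)T^{-1}y\big)=\{x,y,z\}+\{y,z,x\}+\{z,x,y\}=[x,y,z]^C,$$
so $(A,\{\cdot,\cdot,\cdot\},\alpha)$ is a $3$-Hom-pre-Lie algebra compatible with $(A,[\cdot,\cdot,\cdot],\alpha)$.

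The step I expect to require the most care is the bookkeeping in the ``only if'' direction: confirming that the multiplicativity constraint $\alpha\circ\{\cdot,\cdot,\cdot\}=\{\cdot,\cdot,\cdot\}\circ\alpha^{\otimes 3}$ survives the transport — which it does precisely because $T$ intertwines $\phi$ with $\alpha$ — and making sure that no hidden invertibility of $\phi$ or extra hypothesis is needed (only $T$ is assumed invertible). Everything else follows directly from Proposition~\ref{pro:3hompreLieT} and the defining identity \eqref{eq:Ooperator} of an $\mathcal O$-operator.
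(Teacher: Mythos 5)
Your proof is correct and takes essentially the same route the paper itself uses for the analogous Corollary \ref{3HomLden by invert O-op1}: given a compatible $3$-Hom-pre-Lie structure, $T=\mathrm{id}_A$ is an invertible $\mathcal O$-operator for the left-multiplication representation $(A,L,\alpha)$, and conversely an invertible $\mathcal O$-operator transports the structure of Proposition \ref{pro:3hompreLieT} from $V$ to $T(V)=A$, with compatibility read off from \eqref{eq:Ooperator} and multiplicativity from $T\phi=\alpha T$. The only blemish is that your ``if''/``only if'' labels are interchanged relative to the statement, but both implications are correctly established.
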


 Now, we introduce the definition of representation of  $3$-Hom-pre-Lie algebra and give some related results.
\begin{defi}\label{def-rep-3-hompre}
 A  representation of a $3$-Hom-pre-Lie algebra $(A,\{\cdot,\cdot,\cdot\},\alpha)$   on a vector space $V$ consists of a triplet $(l,r,\phi)$,
  where $l:\wedge^2 A \rightarrow gl(V)$ is a representation of the sub-adjacent $3$-Hom-Lie algebra $A^c$ on $V$ with respect to $\phi \in End(V)$ and $r:\otimes^2 A \rightarrow gl(V)$  is a linear map such that  for all $x_1,x_2,x_3,x_4\in A$, the following conditions hold
{\small \begin{align}
&\phi\circ r(x_1,x_2)=r(\alpha(x_1),\alpha(x_2))\circ\phi,\\
 &\label{rep1} l(\alpha(x_1),\alpha(x_2))r(x_3,x_4) =r(\alpha(x_3),\alpha(x_4))\mu(x_1,x_2)+r([x_1,x_2,x_3]^C,\alpha(x_4))\circ\phi+ r(\alpha(x_3),\{x_1,x_2,x_4\})\circ\phi, \\
&\label{rep2} r([x_1,x_2,x_3]^C,\alpha(x_4))\circ\phi=l(\alpha(x_1),\alpha(x_2))r(x_3,x_4)+l(\alpha(x_2),\alpha(x_3))r(x_1,x_4)+l(\alpha(x_3),\alpha(x_1))r(x_2,x_4),\\
 & r(\alpha(x_1),\{x_2,x_3,x_4\})\circ\phi =r(\alpha(x_3),\alpha(x_4))\mu(x_1,x_2) -r(\alpha(x_2),\alpha(x_4))\mu(x_1,x_3)
                         + l(\alpha(x_2),\alpha(x_3))r(x_1,x_4),  \label{rep3}\\
& r(\alpha(x_3),\alpha(x_4))\mu(x_1,x_2) =
                       l(\alpha(x_1),\alpha(x_2))r(x_3,x_4)           -r(\alpha(x_2),\{x_1,x_3,x_4\})\circ\phi+
                        r(\alpha(x_1),\{x_2,x_3,x_4\})\circ\phi,  \label{rep4}
\end{align}}
where $\mu(x,y)=l(x,y)+r(x,y)-r(y,x)$, for any $x,y \in A$.
\end{defi}
It is obvious that $(A,L,R,\alpha)$ is a
representation of a $3$-Hom-pre-Lie algebra on itself, which is called the adjoint
representation.
\begin{thm}
Let $(A,\{\cdot,\cdot,\cdot\})$ be 3-pre-Lie algebras and $(V,l,r)$ be a representation on $A$. Let  $\phi:V\rightarrow V$ and $\alpha:A\rightarrow A$ be two morphisms such that:
$$
\phi\circ l(x_1,x_2)=l(\alpha(x_1),\alpha(x_2))\circ\phi, ~~~\phi\circ r(x_1,x_2)=r(\alpha(x_1),\alpha(x_2))\circ\phi, \forall x_1,x_2\in A.
$$
Then $(V,\tilde l,\tilde r,\phi)$ is a representation of 3-Hom-pre-Lie algebras  $(A,\{\cdot,\cdot,\cdot\}_{\alpha,C}=\alpha\circ\{\cdot,\cdot,\cdot\}^C,\alpha)$, where $\tilde l=\phi\circ l$ and $\tilde r=\phi\circ r$.

\end{thm}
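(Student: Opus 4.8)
The plan is to verify directly that the tuple $(V,\tilde l,\tilde r,\phi)$ satisfies all the axioms in Definition~\ref{def-rep-3-hompre} for the twisted $3$-Hom-pre-Lie algebra $(A,\{\cdot,\cdot,\cdot\}_{\alpha,C},\alpha)$, using only the representation axioms for $(V,l,r)$ over the (untwisted) $3$-pre-Lie algebra $(A,\{\cdot,\cdot,\cdot\})$ together with the two intertwining hypotheses $\phi\circ l(x_1,x_2)=l(\alpha(x_1),\alpha(x_2))\circ\phi$ and $\phi\circ r(x_1,x_2)=r(\alpha(x_1),\alpha(x_2))\circ\phi$. The key preliminary observation is that the $3$-Hom-Lie bracket induced by $\{\cdot,\cdot,\cdot\}_{\alpha,C}$ via Eq.~\eqref{eq:3cc} is exactly $\alpha\circ[\cdot,\cdot,\cdot]^C$ (this is the standard Yau-twist statement, and is consistent with the sub-adjacent construction of Proposition after Definition), so the $3$-commutator of the twisted structure is $[\cdot,\cdot,\cdot]_{\alpha,C}^C = \alpha\circ[\cdot,\cdot,\cdot]^C$. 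In the same way, $l$ being a representation of the $3$-pre-Lie algebra $(A,\{\cdot,\cdot,\cdot\})$ makes $l$ a representation of the sub-adjacent $3$-Lie algebra $(A,[\cdot,\cdot,\cdot]^C)$, and by the already-available twisting lemma for $3$-Hom-Lie representations (the first Lemma in Section~1, applied to $\tilde l=\phi\circ l$) one gets for free that $\tilde l$ is a representation of the sub-adjacent $3$-Hom-Lie algebra of the twisted structure with respect to $\phi$. This disposes of the ``$l$-part'' of the definition.

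Next I would handle the multiplicativity-type conditions $\phi\circ\tilde r(x_1,x_2)=\tilde r(\alpha(x_1),\alpha(x_2))\circ\phi$ (and the analogous one for $\tilde l$): writing $\tilde r=\phi\circ r$ and pushing $\phi$ through using $\phi\circ r(x_1,x_2)=r(\alpha(x_1),\alpha(x_2))\circ\phi$ gives $\phi\circ\tilde r(x_1,x_2)=\phi\circ\phi\circ r(x_1,x_2)=\phi\circ r(\alpha(x_1),\alpha(x_2))\circ\phi=\tilde r(\alpha(x_1),\alpha(x_2))\circ\phi$, which is immediate. The real work is in the four compatibility identities \eqref{rep1}--\eqref{rep4}. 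For each one I would take the corresponding identity for $(l,r)$ over the untwisted $3$-pre-Lie algebra, apply $\phi$ to both sides, and repeatedly use the two intertwining relations to convert every occurrence of $\phi\circ l(\cdot,\cdot)$ into $l(\alpha(\cdot),\alpha(\cdot))\circ\phi$, every $\phi\circ r(\cdot,\cdot)$ into $r(\alpha(\cdot),\alpha(\cdot))\circ\phi$, and likewise $\phi\circ\mu(\cdot,\cdot)=\mu(\alpha(\cdot),\alpha(\cdot))\circ\phi$ (which follows since $\mu$ is built linearly from $l$ and $r$). One must also check that the composite map $\tilde\mu$ defined from $(\tilde l,\tilde r)$ by $\tilde\mu(x,y)=\tilde l(x,y)+\tilde r(x,y)-\tilde r(y,x)$ equals $\phi\circ\mu$, which is clear by linearity. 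After these substitutions, each term will carry exactly one extra factor of $\phi$ on the outside and the $\{\cdot,\cdot,\cdot\}$ inside the arguments will have been replaced by $\alpha\circ\{\cdot,\cdot,\cdot\}$ and $[\cdot,\cdot,\cdot]^C$ by $\alpha\circ[\cdot,\cdot,\cdot]^C$; matching the result against the definition of \eqref{rep1}--\eqref{rep4} written for the twisted structure with $(\tilde l,\tilde r,\phi)$ then finishes the verification.

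The main obstacle I anticipate is purely bookkeeping: in identities \eqref{rep1}, \eqref{rep3} and \eqref{rep4} the term $r(\alpha(x_i),\{x_j,x_k,x_\ell\})\circ\phi$ has the bracket $\{\cdot,\cdot,\cdot\}$ sitting inside the \emph{second} slot of $r$, so when passing to the twisted structure one needs $r(\alpha(x_i),\alpha\{x_j,x_k,x_\ell\})$, and it must be checked carefully that applying $\phi$ to $r(\alpha(x_i),\{x_j,x_k,x_\ell\})\circ\phi$ and then intertwining produces precisely $\tilde r(\alpha^2(x_i),\alpha\{x_j,x_k,x_\ell\})\circ\phi = \phi\circ r(\alpha^2(x_i),\alpha\{x_j,x_k,x_\ell\})\circ\phi$, i.e.\ that the ``$\alpha$ on the bracket'' lands in the right place. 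A second subtlety is that the hypotheses only require $\phi$ and $\alpha$ to be intertwining morphisms for $l$ and $r$ separately, not that they be multiplicative for the product $\{\cdot,\cdot,\cdot\}$ itself; but multiplicativity of the twisted product $\{\cdot,\cdot,\cdot\}_{\alpha,C}$ with respect to $\alpha$ is automatic from $\alpha$ being linear, and one should confirm that no stronger compatibility between $\alpha$ and $\{\cdot,\cdot,\cdot\}$ is actually used anywhere in the four identities — only the interaction of $\phi$/$\alpha$ with $l$, $r$ and the induced $[\cdot,\cdot,\cdot]^C$ is needed. Once these two points are pinned down, the four verifications are routine and I would present them as four short displayed computations, one per axiom.
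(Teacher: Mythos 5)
Your proposal is correct and follows essentially the same route as the paper: use the intertwining relations $\phi\circ l(x_1,x_2)=l(\alpha(x_1),\alpha(x_2))\circ\phi$ and $\phi\circ r(x_1,x_2)=r(\alpha(x_1),\alpha(x_2))\circ\phi$ to pull a factor $\phi^2$ out of each twisted axiom and reduce it to the corresponding untwisted identity for $(l,r)$, which is exactly how the paper verifies \eqref{rep2} before declaring the remaining identities analogous. The only small inaccuracy is your appeal to a ``twisting lemma'' for $3$-Hom-Lie representations (the first Lemma of Section~1 is the semidirect-product characterization, not such a lemma), but the $\tilde l$-part follows by the very same $\phi^2$-pullout computation, so nothing essential is affected.
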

\begin{proof}
We will proof the equality \eqref{rep2}, i.e. for all $x_1,x_2,x_3,x_4\in A$
\begin{align*}
&\tilde r([x_1,x_2,x_3]_{\alpha,C},\alpha(x_4))\circ\phi-\tilde l(\alpha(x_1),\alpha(x_2))\tilde r(x_3,x_4) -\tilde l(\alpha(x_2),\alpha(x_3))\tilde r(x_1,x_4)-\tilde l(\alpha(x_3),\alpha(x_1))\tilde r(x_2,x_4)=0.
\end{align*}
We have
\begin{align*}
&\tilde r([x_1,x_2,x_3]_{\a,C},\a(x_4))\circ\phi-\tilde l(\a(x_1),\alpha(x_2))\tilde r(x_3,x_4) -\tilde l(\alpha(x_2),\a(x_3))\tilde r(x_1,x_4) -\tilde l(\a(x_3),\alpha(x_1))\tilde r(x_2,x_4)\\
=&\phi\circ r(\alpha\circ[x_1,x_2,x_3],\alpha(x_4))\circ\phi-\phi\circ l(\alpha(x_1),\alpha(x_2))\phi\circ r(x_3,x_4) \\
-&\phi\circ l(\alpha(x_2),\alpha(x_3))\phi\circ r(x_1,x_4)
-\phi\circ l(\alpha(x_3),\alpha(x_1))\phi\circ r(x_2,x_4)\\
=&\phi^2\circ \big(r([x_1,x_2,x_3],x_4)-l(x_1,x_2) r(x_3,x_4)- l(x_2,x_3) r(x_1,x_4)-l(x_3,x_1) r(x_2,x_4)\big)=0.
\end{align*}
The other identities of the definition \eqref{def-rep-3-hompre} can be shown similarly.
\end{proof}

\begin{pro}
Let $(A,\{\c,\c,\c\},\alpha)$ be a $3$-Hom-pre-Lie algebra, $V$  a vector space and $l,r:
\otimes^2A\rightarrow  gl(V)$  two  linear
maps and $\phi:V\rightarrow V$ a  morphism algebra. Then $(V,l,r,\phi)$ is a representation of $A$ if and only if there
is a $3$-Hom-pre-Lie algebra structure $($called the semi-direct product$)$
on the direct sum $A\oplus V$ of vector spaces, defined by
\begin{align}\label{eq:sum}
(\alpha+\phi)_{{A\oplus V}}(x_1+u_1)=&\alpha(x_1)+\phi(u_1),\\
[x_1+u_1,x_2+u_2,x_3+u_3]_{A\oplus V}=&\{x_1,x_2,x_3\}+l(x_1,x_2)u_3-r(x_1,x_3)u_2+r(x_2,x_3)u_1,
\end{align}
for $x_i\in A, u_i\in V, 1\leq i\leq 3$. We denote this semi-direct product $3$-Hom-pre-Lie algebra by $A\ltimes_{l,r}^{\a,\phi} V.$
\end{pro}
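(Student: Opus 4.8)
The plan is to check, on the space $A\oplus V$ equipped with $[\cdot,\cdot,\cdot]_{A\oplus V}$ and $(\alpha+\phi)_{A\oplus V}$, the three defining identities of a $3$-Hom-pre-Lie algebra — skew-symmetry \eqref{3-pre-Lie 0}, the fundamental identities \eqref{3-hom-pre-Lie 1}, \eqref{3-hom-pre-Lie 2}, and multiplicativity of the twist — and to read off exactly what each of them demands, mirroring the semidirect-product construction already recorded for $3$-Hom-Lie algebras. Since $[\cdot,\cdot,\cdot]_{A\oplus V}$ and the induced $3$-commutator \eqref{eq:3cc} are multilinear, it suffices to test every identity on homogeneous tuples, each entry lying either in $A$ or in $V$. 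The key simplification is that $[a_1,a_2,a_3]_{A\oplus V}=0$ whenever at least two of the $a_i$ are in $V$; hence in every multilinear identity all terms with two or more $V$-arguments vanish. On tuples with all entries in $A$ the identities are precisely the axioms of $A$, and multiplicativity of $(\alpha+\phi)_{A\oplus V}$ amounts to multiplicativity of $\alpha$ together with $\phi\circ l(x,y)=l(\alpha(x),\alpha(y))\circ\phi$ and $\phi\circ r(x,y)=r(\alpha(x),\alpha(y))\circ\phi$. Thus the whole content is carried by tuples with exactly one entry in $V$.

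First I would compute the induced $3$-commutator on the semidirect sum; a short calculation gives
\[
[x_1+u_1,x_2+u_2,x_3+u_3]^C_{A\oplus V}=[x_1,x_2,x_3]^C+\mu(x_2,x_3)u_1+\mu(x_3,x_1)u_2+\mu(x_1,x_2)u_3 ,
\]
so that the sub-adjacent $3$-Hom-Lie algebra of $A\ltimes_{l,r}^{\a,\phi}V$ is the $3$-Hom-Lie semidirect product $A^c\ltimes_\mu V$; in particular the lone $V$-argument produces a factor $\mu$ precisely when it is fed into a $C$-bracket, and an $l$ or an $r$ otherwise, which is why \eqref{rep1}–\eqref{rep4} are phrased via $\mu$. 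I would then run through the placements of the single $V$-entry, using $[a,b,v]_{A\oplus V}=l(a,b)v$, $[v,a,b]_{A\oplus V}=r(a,b)v$, $[a,v,b]_{A\oplus V}=-r(a,b)v$ for $v\in V$, $a,b\in A$. For \eqref{3-hom-pre-Lie 1}: the $V$-entry in the third (equivalently fourth) slot returns \eqref{rep1}, in the first (equivalently second) slot returns \eqref{rep3}, in the fifth slot returns \eqref{Rep3HomLie1} for $l$; for \eqref{3-hom-pre-Lie 2}: the $V$-entry in any of the first three slots returns \eqref{rep4}, in the fourth slot returns \eqref{rep2}, in the fifth slot returns \eqref{Rep3HomLie2} for $l$. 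Skew-symmetry \eqref{3-pre-Lie 0} on the $V$-slots is just antisymmetry of $l$ on $\wedge^2A$ (part of the datum) and antisymmetry of the $r$-terms under $1\leftrightarrow 2$. Each of these reductions is an equivalence, so running the computation backwards establishes the converse: an ``$A\oplus V$'' of this shape forces $l$ to be a representation of $A^c$, $r$ to be $\phi$-equivariant, and \eqref{rep1}–\eqref{rep4}, i.e. $(l,r,\phi)$ to be a representation in the sense of Definition~\ref{def-rep-3-hompre}.

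The expansions are routine multilinear algebra; the actual difficulty, and the likeliest source of a slip, is bookkeeping. One must keep track of which half of the twist acts on each slot — $\alpha$ on the $A$-entries and $\phi$ on the $V$-entry, so that an $\alpha(x_i)$ in the original identity becomes $\phi(u_i)$ exactly when $x_i$ is the entry moved into $V$ — and of the signs coming from the skew-symmetry of $l$ and of $[\cdot,\cdot,\cdot]^C$ and from whether the $V$-vector sits in the first, middle, or last slot of a bracket. One should also verify that the six sub-identities extracted from \eqref{3-hom-pre-Lie 1} and \eqref{3-hom-pre-Lie 2} (three from each, once placements related by the antisymmetric slots are identified) match the six fundamental conditions packaged in Definition~\ref{def-rep-3-hompre} — \eqref{Rep3HomLie1} and \eqref{Rep3HomLie2} for $l$, together with \eqref{rep1}–\eqref{rep4} — bijectively, with nothing left over and no extra constraint appearing: this is the substantive point behind the equivalence.
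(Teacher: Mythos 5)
Your proposal is correct and takes essentially the same route as the paper: a direct verification that the two defining identities \eqref{3-hom-pre-Lie 1}--\eqref{3-hom-pre-Lie 2} (together with skew-symmetry and multiplicativity of the twist) for the semidirect bracket are equivalent to the conditions of Definition~\ref{def-rep-3-hompre}. The only difference is bookkeeping: the paper expands the identities once on generic mixed elements $x_i+u_i$ and reads off \eqref{rep1}--\eqref{rep4} and the $l$-conditions in aggregate, whereas you reduce by multilinearity to tuples with a single $V$-entry and match each placement (via the computation $[x_1+u_1,x_2+u_2,x_3+u_3]^C_{A\oplus V}=[x_1,x_2,x_3]^C+\mu(x_2,x_3)u_1+\mu(x_3,x_1)u_2+\mu(x_1,x_2)u_3$) to a specific axiom; your slot-by-axiom dictionary checks out.
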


\begin{proof}
Let $x_i \in A$ and $u_i \in V$, for $1 \leq i \leq 5$. Then
{\small\begin{align*}
    &[(\alpha+\phi)_{{A\oplus V}}(x_1+u_1),(\alpha+\phi)_{{A\oplus V}}(x_2+u_2),[x_3+u_3,x_4+u_4,x_5+u_5]_{A\oplus V}]_{A\oplus V}\\
    -&
    [(\alpha+\phi)_{{A\oplus V}}(x_3+u_3),(\alpha+\phi)_{{A\oplus V}}(x_4+u_4),[x_1+u_1,x_2+u_2,x_5+u_5]_{A\oplus V}]_{A\oplus V}\\
    -&[[x_1+u_1,x_2+u_2,x_3+u_3]_{A\oplus V}^C,(\alpha+\phi)_{{A\oplus V}}(x_4+u_4),(\alpha+\phi)_{{A\oplus V}}(x_5+u_5)]_{A\oplus V}\\
    +&[[x_1+u_1,x_2+u_2,x_4+u_4]_{A\oplus V}^C,(\alpha+\phi)_{{A\oplus V}}(x_3+u_3),(\alpha+\phi)_{{A\oplus V}}(x_5+u_5)]_{A\oplus V} \\
=&\{\alpha(x_1),\alpha(x_2),\{x_3,x_4,x_5\}\}+l(\alpha(x_1),\alpha(x_2))l(x_3,x_4)\phi(u_5)
-l(\alpha(x_1),\alpha(x_2))r(x_3,x_5)\phi(u_4) \\
+&l(\alpha(x_1),\alpha(x_2))r(x_4,x_5)\phi(u_3)
-r(\alpha(x_1),\{x_3,x_4,x_5\})\phi(u_2)+r(\alpha(x_2),\{x_3,x_4,x_5\})\phi(u_1)\\
-&\{\alpha(x_3),\alpha(x_4),\{x_1,x_2,x_5\}\}-l(\alpha(x_3),\alpha(x_4))l(x_1,x_3)\phi(u_5)
+l(\alpha(x_3),\alpha(x_4))r(x_1,x_5)\phi(u_2)  \\
-& l(\alpha(x_3),\alpha(x_4))r(x_2,x_5)\phi(u_1)
+r(\alpha(x_3),\{x_1,x_2,x_5\})\phi(u_4)-r(\alpha(x_4),\{x_1,x_2,x_5\})\phi(u_3)
\\-&\{[x_1,x_2,x_3]^C,\alpha(x_4),\alpha(x_5)\}+l([x_1,x_2,x_3]^C,\alpha(x_4))\phi(u_5)-l([x_1,x_2,x_3]^C,\alpha(x_5))\phi(u_4)
\\+&\circlearrowleft_{1,2,3}\Big(r(\alpha(x_4),\alpha(x_5))l(x_1,x_2)\phi(u_3)
-r(\alpha(x_4),\alpha(x_5))r(x_1,x_3)\phi(u_2)+r(\alpha(x_4),\alpha(x_5))r(x_2,x_3)\phi(u_1)\Big)
\\+&\{[x_1,x_2,x_4]^C,\alpha(x_3),\alpha(x_5)\}-l([x_1,x_2,x_4]^C,\alpha(x_3))\phi(u_5)+l([x_1,x_2,x_4]^C,\alpha(x_5))\phi(u_3)
\\-&\circlearrowleft_{1,2,4}\Big(r(\alpha(x_3),\alpha(x_5))l(x_1,x_2)\phi(u_4)
+r(\alpha(x_3),\alpha(x_5))r(x_1,x_4)\phi(u_2)-r(\alpha(x_3),\alpha(x_5))r(x_2,x_4)\phi(u_1)\Big)=0
\end{align*}}
if and only if the identities \eqref{rep1}-\eqref{rep3}  hold.
Moreover,
{\small\begin{align*}
    &[[x_1+u_1,x_2+u_2,x_3+u_3]_{A\oplus V}^C,(\alpha+\phi)_{{A\oplus V}}(x_4+u_4),(\alpha+\phi)_{{A\oplus V}}(x_5+u_5)]_{A\oplus V}\\
    -&[(\alpha+\phi)_{{A\oplus V}}(x_1+u_1),(\alpha+\phi)_{{A\oplus V}}(x_2+u_2),[x_3+u_3,x_4+u_4,x_5+u_5]_{A\oplus V}]_{A\oplus V}\\
    -&
    [(\alpha+\phi)_{{A\oplus V}}(x_2+u_2),(\alpha+\phi)_{{A\oplus V}}(x_3+u_3),[x_1+u_1,x_4+u_4,x_5+u_5]_{A\oplus V}]_{A\oplus V}\\
    -&[(\alpha+\phi)_{{A\oplus V}}(x_3+u_3),(\alpha+\phi)_{{A\oplus V}}(x_1+u_1),[x_2+u_2,x_4+u_4,x_5+u_5]_{A\oplus V}]_{A\oplus V}\\
    =&[[x_1+u_1,x_2+u_2,x_3+u_3]_{A\oplus V}^C,(\alpha+\phi)_{{A\oplus V}}(x_4+u_4),(\alpha+\phi)_{{A\oplus V}}(x_5+u_5)]_{A\oplus V}\\
    -&\circlearrowleft_{1,2,3}[(\alpha+\phi)_{{A\oplus V}}(x_1+u_1),(\alpha+\phi)_{{A\oplus V}}(x_2+u_2),[x_3+u_3,x_4+u_4,x_5+u_5]_{A\oplus V}]_{A\oplus V}
\\=&\{[x_1,x_2,x_3]^C,\alpha(x_4),\alpha(x_5)\}-l([x_1,x_2,x_3]^C,\alpha(x_4))\phi(u_5)+l([x_1,x_2,x_3]^C,\alpha(x_5))\phi(u_4)
\\-&\circlearrowleft_{1,2,3}\Big(r(\alpha(x_4),\alpha(x_5))l(x_1,x_2)\phi(u_3)
-r(\alpha(x_4),\alpha(x_5))r(x_1,x_3)\phi(u_2)+r(\alpha(x_4),\alpha(x_5))r(x_2,x_3)\phi(u_1)\Big)\\
-&\circlearrowleft_{1,2,3}\Big(\{\alpha(x_1),\alpha(x_2),\{x_3,x_4,x_5\}\}
+l(\alpha(x_1),\alpha(x_2))l(x_3,x_4)\phi(u_5)\\
-&l(\alpha(x_1),\alpha(x_2))r(x_3,x_5)\phi(u_4)
+l(\alpha(x_1),\alpha(x_2))r(x_4,x_5)\phi(u_3)\\&-r(\alpha(x_1),\{x_3,x_4,x_5\})\phi(u_2)+r(\alpha(x_2),\{x_3,x_4,x_5\})\phi(u_1)\Big).=0
\end{align*}}
if and only if the identities \eqref{rep2}-\eqref{rep4}  hold.
\end{proof}

  \begin{pro}
Let $(V,l,r,\phi)$ be a representation of a $3$-Hom-pre-Lie algebra $(A,\{\cdot,\cdot,\cdot\},\a)$. Then $\mu$ is a representation of the
sub-adjacent $3$-Hom-Lie algebra $(A^c,[\cdot,\cdot,\cdot]^C,\alpha)$ on the vector space $V$ with respect to $\phi$,
where $\mu$ is defined in definition \ref{def-rep-3-hompre}.
  \end{pro}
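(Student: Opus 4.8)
The plan is to avoid verifying the three representation axioms for $\mu$ by hand and instead bootstrap from the semidirect product constructions already available. First I would record the two cheap facts. Since $l$ is skew-symmetric (being a representation of $A^c$ on $\wedge^2A$) and $r(x,y)-r(y,x)$ is manifestly skew, the operator $\mu(x,y)=l(x,y)+r(x,y)-r(y,x)$ is skew in $x,y$, so it is a genuine map $\wedge^2A\to gl(V)$; and $\phi\circ\mu(x,y)=\mu(\alpha(x),\alpha(y))\circ\phi$ follows at once from the corresponding multiplicativity relations for $l$ and $r$ in Definition~\ref{def-rep-3-hompre}. (Both facts will in fact reappear for free at the end.)

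The substance of the argument is the following identification. By the preceding proposition, $A\ltimes_{l,r}^{\a,\phi}V$ is a $3$-Hom-pre-Lie algebra; hence, by the proposition stating that the sub-adjacent $3$-commutator of a $3$-Hom-pre-Lie algebra is a $3$-Hom-Lie algebra, the triple $\big(A\oplus V,\ [\cdot,\cdot,\cdot]^C_{A\oplus V},\ \alpha\oplus\phi\big)$ is a $3$-Hom-Lie algebra. I would then compute this induced commutator explicitly from $[a,b,c]^C_{A\oplus V}=[a,b,c]_{A\oplus V}+[b,c,a]_{A\oplus V}+[c,a,b]_{A\oplus V}$ together with the semidirect product formula for $[\cdot,\cdot,\cdot]_{A\oplus V}$. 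The $A$-component is visibly $[x_1,x_2,x_3]^C$. In the $V$-component each $u_i$ occurs exactly once in each of the three cyclic terms, sitting in the left, middle, or right slot; collecting the coefficients of $u_3$, of $u_1$, and of $u_2$ yields $l(x_1,x_2)+r(x_1,x_2)-r(x_2,x_1)$, $l(x_2,x_3)+r(x_2,x_3)-r(x_3,x_2)$, and $l(x_3,x_1)+r(x_3,x_1)-r(x_1,x_3)$, i.e. $\mu(x_1,x_2)$, $\mu(x_2,x_3)$, $\mu(x_3,x_1)$ respectively. Thus
\[
[x_1+u_1,x_2+u_2,x_3+u_3]^C_{A\oplus V}=[x_1,x_2,x_3]^C+\mu(x_1,x_2)u_3+\mu(x_3,x_1)u_2+\mu(x_2,x_3)u_1 ,
\]
which is precisely the semidirect product $3$-Hom-Lie bracket from the semidirect product Lemma for $3$-Hom-Lie algebras with $\rho$ replaced by $\mu$, and the twist map $\alpha\oplus\phi$ matches as well.

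Since this structure on $A\oplus V$ has exactly the shape occurring in that Lemma (with $\rho=\mu$) and has just been shown to be a $3$-Hom-Lie algebra, the ``if'' direction of the Lemma gives that $(V,\mu,\phi)$ is a representation of the sub-adjacent $3$-Hom-Lie algebra $A^c$, which is the claim. The only place where actual work is needed is the coefficient-collecting step in the $V$-component; it is short but must be done carefully, and it is the main (mild) obstacle. A more pedestrian alternative would be to substitute $\mu(x,y)=l(x,y)+r(x,y)-r(y,x)$ directly into \eqref{Rep3HomLie1}--\eqref{Rep3HomLie2} and reduce using \eqref{rep1}--\eqref{rep4}, the fact that $l$ represents $A^c$, and the identities of Proposition~\ref{pro:rep properties}; this works but is considerably messier, and I would not pursue it.
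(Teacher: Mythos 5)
Your proposal is correct, but note that the paper itself gives no argument at all for this proposition (its proof reads only ``Straightforward''), which presumably points to the pedestrian route you mention at the end: substituting $\mu=l+r-r\circ\tau$ into the representation axioms and reducing via \eqref{rep1}--\eqref{rep4}. Your route is genuinely different and, in my view, cleaner: you use the semidirect product $3$-Hom-pre-Lie algebra $A\ltimes_{l,r}^{\a,\phi}V$, pass to its sub-adjacent $3$-Hom-Lie algebra, and identify the induced commutator. I checked the coefficient-collecting step: with the bracket $[x_1+u_1,x_2+u_2,x_3+u_3]_{A\oplus V}=\{x_1,x_2,x_3\}+l(x_1,x_2)u_3-r(x_1,x_3)u_2+r(x_2,x_3)u_1$, cyclic summation gives the coefficient $l(x_1,x_2)+r(x_1,x_2)-r(x_2,x_1)=\mu(x_1,x_2)$ on $u_3$, and likewise $\mu(x_3,x_1)$ on $u_2$ and $\mu(x_2,x_3)$ on $u_1$, so the sub-adjacent bracket is exactly the semidirect-product $3$-Hom-Lie bracket with $\rho=\mu$ and twist $\alpha\oplus\phi$; the backward direction of the semidirect product Lemma (applied to the $3$-Hom-Lie algebra $A^c$) then yields that $(V,\mu,\phi)$ is a representation. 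The two hypotheses you flag are indeed the only ones needed beyond the cited results: skewness of $\mu$ (immediate since $l$ is defined on $\wedge^2A$ and $r-r\circ\tau$ is skew, and in any case forced by the skew-symmetry of the sub-adjacent commutator) and applicability of the semidirect product proposition, which holds because $(V,l,r,\phi)$ is assumed to be a representation. What your approach buys is that all the hard identity-checking is delegated to results already proved (or asserted) in the paper, at the cost of relying on the correctness of the semidirect-product proposition for $3$-Hom-pre-Lie algebras; the direct verification is self-contained but substantially messier.
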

\begin{proof}
Straightforward.
\end{proof}

Now, we study the dual representation of a $3$-Hom-pre-Lie algebra. We give a definition
without any additional condition, similarly as for Hom-Lie algebras and $3$-Hom-Lie algebras.
Let $(V,l,r,\phi)$ be a representation of a $3$-Hom-pre-Lie algebra  $(A,\{\cdot,\cdot,\cdot\},\a)$.
 Define $l^*: A\wedge A \longrightarrow gl(V^*)$  and $r^*: A \otimes A \longrightarrow gl(V^*)$ as usual by
 \begin{align*}
\langle l^*(x,y)(\xi),u\rangle=&-\langle\xi,l(x,y)(u)\rangle,\\
\langle r^*(x,y)(\xi),u\rangle=&-\langle\xi,r(x,y)(u)\rangle,
\end{align*}
for all $x,y \in A,u\in V,\xi\in V^*$.
However, in general $(l^*-r^*\tau+r^*,-r^*)$ is not a representation of $A$ anymore.  We  will try to construct the dual representation of a representation of a 3-Hom-pre-Lie algebra without any condition.

Define $l^\star:A \wedge A \longrightarrow gl(V^*)$ and $r^\star :A \otimes A \longrightarrow gl(V^*)$ by
\begin{align}\label{l,r star 1}
 l^\star(x,y)(\xi):=&l^*(\a(x),\a(y))\big{(}(\phi^{-2})^*(\xi)\big{)}, \\
  r^\star(x,y)(\xi):=&r^*(\a(x),\a(y))\big{(}(\phi^{-2})^*(\xi)\big{)}.
\end{align}
More precisely,  we have
\begin{align}\label{l,r star 2}
\langle l^\star(x,y)(\xi),u\rangle=&-\langle\xi,l(\a^{-1}(x),\a^{-1}(y))(\phi^{-2}(u))\rangle, \\
\langle r^\star(x,y)(\xi),u\rangle=&-\langle\xi,r(\a^{-1}(x),\a^{-1}(y))(\phi^{-2}(u))\rangle,
\end{align}
for any $x,y \in A, u\in V, \xi \in V^*$.

\begin{pro}
Let $(V,l,r,\phi)$ be a representation  of a $3$-Hom-pre-Lie algebra $(A,\{\cdot,\cdot,\cdot\},\a)$.  Then $(\mu^\star,-r^\star)$ is a representation of the  $3$-Hom-pre-Lie algebra $(A,\{\cdot,\cdot,\cdot\},\a)$  on the vector space $V^*$ with respect to $(\phi^{-1})^*$, which is called the dual representation of the representation $(V,l,r,\phi)$.
\end{pro}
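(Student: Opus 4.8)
The plan is to verify the defining identities of a representation of a $3$-Hom-pre-Lie algebra (Definition \ref{def-rep-3-hompre}) for the pair $(\mu^\star,-r^\star)$ acting on $V^*$ with respect to $(\phi^{-1})^*$. By the earlier Proposition, $\mu = l + r - r\tau$ is already a representation of the sub-adjacent $3$-Hom-Lie algebra $(A^c,[\cdot,\cdot,\cdot]^C,\alpha)$ on $V$ with respect to $\phi$; hence, by Lemma \ref{lem:dualrep} applied to the representation $(V,\mu,\phi)$ of the $3$-Hom-Lie algebra $A^c$, the dual $\mu^\star$ is automatically a representation of $A^c$ on $V^*$ with respect to $(\phi^{-1})^*$. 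This disposes of the requirement that the ``left'' part of the pair be a sub-adjacent representation, and it is the first step I would record.

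Next I would write down, using the pairing formula \eqref{l,r star 2} (and its analogue for $\mu^\star$), the explicit form of each operator: for $x,y\in A$, $\xi\in V^*$, $u\in V$,
$$\langle \mu^\star(x,y)(\xi),u\rangle = -\langle \xi, \mu(\alpha^{-1}(x),\alpha^{-1}(y))(\phi^{-2}(u))\rangle,\qquad \langle (-r^\star)(x,y)(\xi),u\rangle = \langle \xi, r(\alpha^{-1}(x),\alpha^{-1}(y))(\phi^{-2}(u))\rangle.$$
The multiplicativity/compatibility condition $ (\phi^{-1})^*\circ(-r^\star)(x_1,x_2) = (-r^\star)(\alpha(x_1),\alpha(x_2))\circ(\phi^{-1})^*$ follows exactly as in the proof of Lemma \ref{lem:dualrep}: both sides, paired against $u$, equal $\langle\xi, r(\alpha^{-1}(x_1),\alpha^{-1}(x_2))\phi^{-3}(u)\rangle$. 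The substance is to check the four mixed identities \eqref{rep1}--\eqref{rep4} for $(\mu^\star,-r^\star)$. The strategy is dualization: take the dual (transpose) of each of \eqref{rep1}--\eqref{rep4} for the original representation $(l,r,\phi)$, insert the shifts by $\alpha^{\pm 1}$ and $\phi^{\pm 2}$ dictated by the $\star$-construction, and match terms. A useful observation that makes the bookkeeping manageable is that the auxiliary operator attached to the dual pair, $\mu' := \mu^\star + (-r^\star) - (-r^\star)\tau = \mu^\star - r^\star + r^\star\tau$, when evaluated via \eqref{l,r star 2}, dualizes the original $\mu = l+r-r\tau$ — more precisely $\langle \mu'(x,y)\xi, u\rangle = -\langle \xi, \mu(\alpha^{-1}(x),\alpha^{-1}(y))\phi^{-2}(u)\rangle$ is *not* quite right because $\mu$ is built from $l,r$ and $-r^\star$ dualizes $r$ with a sign; so instead one gets $\mu' = \mu^\star$ up to the sign pattern, and tracking that $l = \mu - r + r\tau$ lets one rewrite every occurrence of $l^\star$-type terms in the dualized identities in terms of $\mu^\star$ and $r^\star$ only. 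With this substitution, each of \eqref{rep1}--\eqref{rep4} for $(\mu^\star,-r^\star)$ becomes, after pairing with $u\in V$ and moving everything to the $V$-side, precisely one of the original identities \eqref{rep1}--\eqref{rep4} (or a $\mathbb{K}$-linear combination of them together with \eqref{3-pre-Lie 0}) evaluated at the shifted arguments $\alpha^{-1}(x_i)$ on input and $\phi^{-k}(u)$ on output. Since $(l,r,\phi)$ is a representation, the right-hand sides vanish, and we are done.

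The main obstacle is purely combinatorial: correctly matching the shift parameters. Each $\star$-operator carries an $\alpha^{-1}$ on its two algebra arguments and a $\phi^{-2}$ (from $(\phi^{-2})^*$) acting on $V$; a composition of two of them, together with the $(\phi^{-1})^*$ that appears in \eqref{rep1}--\eqml{rep4}, produces a $\phi^{-5}$ (analogous to the $\phi^{-4}$ and $\phi^{-3}$ discrepancies that appear and then cancel in the proof of Lemma \ref{lem:dualrep}), and one must check that the power of $\phi$ and the power of $\alpha$ come out the same on every term of a given identity before invoking the original relation. The compatibility conditions $\phi\circ l = l(\alpha,\alpha)\circ\phi$ and $\phi\circ r = r(\alpha,\alpha)\circ\phi$, together with multiplicativity $\alpha[\cdot,\cdot,\cdot]^C = [\cdot,\cdot,\cdot]^C\circ\alpha^{\otimes 3}$, are exactly what is needed to slide these shifts past one another so that the powers align; I would invoke them silently, as in Lemma \ref{lem:dualrep}. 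The remaining identities \eqref{rep2} and one of \eqref{rep3}/\eqref{rep4} are shown ``similarly'' to the first two, exactly as the author does for Lemma \ref{lem:dualrep}, so I would spell out \eqref{rep1} (or \eqref{rep4}, whichever is cleanest after the $l = \mu - r + r\tau$ substitution) in full and indicate that the others follow by the same dualization.
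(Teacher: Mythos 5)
Your plan is correct and is essentially the intended argument: the paper's own proof is just the word ``Straightforward'', and what you outline --- $\mu^\star$ is a representation of the sub-adjacent $3$-Hom-Lie algebra on $V^*$ by the preceding proposition together with Lemma \ref{lem:dualrep}, compatibility of $-r^\star$ with $(\phi^{-1})^*$ by the same computation as in that lemma, and the mixed identities \eqref{rep1}--\eqref{rep4} by pairing with $u\in V$, sliding the $\alpha$- and $\phi$-shifts via the compatibility and multiplicativity relations, and then invoking the identities of $(l,r,\phi)$ --- is exactly the direct verification the paper suppresses. A few small touch-ups: the auxiliary operator of the dual pair is exactly $l^\star$, since $\mu^\star-r^\star+r^\star\tau=(\mu-r+r\tau)^\star=l^\star$ (not ``$\mu^\star$ up to the sign pattern''), and this is precisely what makes your substitution $l=\mu-r+r\tau$ work; the composite shifts that occur are $\phi^{-4}$ against $\phi^{-3}$, as in Lemma \ref{lem:dualrep}, not $\phi^{-5}$; and your hedge about linear combinations is genuinely needed, since e.g.\ the dual of \eqref{rep2} unwinds to the identity $r([x_1,x_2,x_3]^C,\alpha(x_4))\circ\phi=-\circlearrowleft_{1,2,3}\,r(\alpha(x_3),\alpha(x_4))\mu(x_1,x_2)$, which follows from the cyclic sums of \eqref{rep1} and \eqref{rep3} combined with \eqref{rep2} and the skew-symmetries of $l$ and $\mu$, not from any single one of the original identities.
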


\begin{proof}
Straightforward.
\end{proof}

 If $(V,l,r,\phi)=(A,L,R,\a)$ is the adjoint representation of a $3$-Hom-pre-Lie algebra $(A,\{\cdot,\cdot,\cdot\},\a)$, then we obtain
$(l^\star-r^\star\tau+r^\star,-r^\star)=(ad^\star,-R^\star)$ and $(\phi^{-1})^*=(\a^{-1})^*$.

\begin{defi}
 Let $(V,l,r,\phi)$ be a representation of a $3$-Hom-pre-Lie algebra $(A,\{\c,\c,\c\},\alpha)$. A linear operator $T: V \to A$ is called and $\mathcal{O}$-operator associated to $(V,l,r,\phi)$ if $T$ satisfies
 \begin{eqnarray}\label{O-op 3-pre-Lie}
   &T\phi=\alpha T.&\nonumber\\
   & \{Tu,Tv,Tw\}=T\left(l(Tu,Tv)w-r(Tu,Tw)v+r(Tv,Tw)u\right),\quad \forall u,v,w\in V.&
 \end{eqnarray}
\end{defi}
If $V=A$, then $T$ is called a Rota-Baxter operator on $A$ of weight zero. That is
\begin{align*}
\{\mathcal R(x),\mathcal R(y),\mathcal R(z)\}= \mathcal R\big(\{\mathcal R(x),\mathcal R(y),z\}+\{\mathcal R(x),y,\mathcal R(z)\}+\{x,\mathcal R(y),\mathcal R(z)\}\big),
\end{align*}
for all $x,y,z \in A$.
%

\section{$3$-Hom-L-dendriform algebras}
In this section, we introduce the notion of a $3$-Hom-L-dendriform algebra which is exactly the ternary version of a Hom-L-dendriform algebra.

\begin{defi}
Let $A$ be a vector space with three linear maps $\nwarrow, \nearrow : \otimes^3 A \to A$ and $\alpha:A\rightarrow A$. The 4-uple $(A,\nwarrow,\nearrow,\alpha)$ is called a $3$-Hom-L-dendriform algebra if the following identities hold:
\begin{align}\label{3-Hom-L-dendriform0}
&  \nw(x_1,x_2,x_3)+ \nw(x_2,x_1,x_3)=0  ,\\
 \label{3-Hom-L-dendriform1}&   \nw(\alpha(x_1),\alpha(x_2),\nw(x_3,x_4,x_5))-  \nw(\alpha(x_3),\alpha(x_4),\nw(x_1,x_2,x_5)) \nonumber\\
&\hspace{2 cm} =\nw([x_1,x_2,x_3]^C,\alpha(x_4),\alpha(x_5))-\nw([x_1,x_2,x_4]^C,\alpha(x_3),\alpha(x_5))      ,\\
 &\label{3-Hom-L-dendriform2}  \nw(\alpha(x_1),\alpha(x_2),\ne(x_5,x_3,x_4))-\ne(\alpha(x_5),\alpha(x_3),\{x_1,x_2,x_4\}^h) \nonumber\\
& \hspace{2 cm} =\ne(\alpha(x_5),[x_1,x_2,x_3]^C,\alpha(x_4))+\ne(\{x_1,x_2,x_5\}^v,\alpha(x_3),\alpha(x_4))      , \\
 & \label{3-Hom-L-dendriform3}\ne(\alpha(x_5),\alpha(x_1),\{x_2,x_3,x_4\}^h)-\nw(\alpha(x_2),\alpha(x_3),\ne(x_5,x_1,x_4))
 \nonumber\\
&\hspace{2 cm}=\ne(\{x_1,x_2,x_5\}^v,\alpha(x_3),\alpha(x_4))-\ne(\{x_1,x_3,x_5\}^v,\alpha(x_2),\alpha(x_4))
      ,\\
 &  \label{3-Hom-L-dendriform4}\nw([x_1,x_2,x_3]^C,\alpha(x_4),\alpha(x_5))=\circlearrowleft_{1,2,3}\nw(\alpha(x_1),\alpha(x_2),\nw(x_3,x_4,x_5)),
   \\
 &   \label{3-Hom-L-dendriform5}     \ne(\alpha(x_5),[x_1,x_2,x_3]^C,\alpha(x_4))=\circlearrowleft_{1,2,3}\nw(\alpha(x_1),\alpha(x_2),\ne(x_5,x_3,x_4)) ,\\
 &   \label{3-Hom-L-dendriform6}   \nw(\alpha(x_1),\alpha(x_2),\ne(x_5,x_3,x_4))+\ne(\alpha(x_5),\alpha(x_1),\{x_2,x_3,x_4\}^h)\nonumber\\
&\hspace{2 cm} =\ne(\alpha(x_5),\alpha(x_2),\{x_1,x_3,x_4\}^h)+\ne(\{x_1,x_2,x_5\}^v,\alpha(x_3),\alpha(x_4))     ,
\end{align}
 for all $x_i \in A$, $1\leq i \leq 5$, where
\begin{align}
& \{x,y,z\}^h=\nw(x,y,z)+\ne(x,y,z)-\ne(y,x,z)  ,   \label{accolade horizintal}\\
& \{x,y,z\}^v=\nw(x,y,z)+\ne(z,x,y)-\ne(z,y,x)  ,   \label{accolade vertical}\\
& [x,y,z]^C= \circlearrowleft_{x,y,z} \{x,y,z\}^h=\circlearrowleft_{x,y,z} \{x,y,z\}^v, \label{crochet}
\end{align}
for an $x,y,z \in A$.
\end{defi}

A morphism of $3$-Hom-L-dendriform algebra $f:(A_1,\nwarrow_1,\nearrow_1,\alpha_1)\rightarrow(A_2,\nwarrow_2,\nearrow_2,\alpha_2)$ is a linear map such that:
\begin{eqnarray*}
f\nwarrow_1(x_1,x_2,x_3)&=&\nwarrow_2(f(x_1),f(x_2),f(x_3)),\\
f\nearrow_1(x_1,x_2,x_3)&=&\nearrow_2(f(x_1),f(x_2),f(x_3)),\\
f\circ\alpha_1&=&\alpha_2\circ f,
\end{eqnarray*}
for all $x_1,x_2,x_3\in A_1$.
\begin{pro}
Let $(A,\nw,\ne)$ be a $3$-L-dendriform algebra. Define two linear maps $\nwarrow_\alpha , \nearrow_\alpha : \otimes^3 A \to A$ by
$$
\nwarrow_\alpha(x_1,x_2,x_3)=\nwarrow(\alpha(x_1),\alpha(x_2),\alpha(x_3));~~~~\nearrow_\alpha(x_1,x_2,x_3)=\nearrow(\alpha(x_1),\alpha(x_2),\alpha(x_3)),\forall x_1,x_2,x_3\in A.
$$
Then $(A,\nw_\alpha,\ne_\alpha,\alpha)$ is $3$-Hom-L-dendriform algebra.

\end{pro}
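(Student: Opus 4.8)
The plan is to verify the seven defining identities \eqref{3-Hom-L-dendriform0}--\eqref{3-Hom-L-dendriform6} for the twisted operations $\nwarrow_\alpha,\nearrow_\alpha$ directly, using multiplicativity of $\alpha$ to push it through the original identities for $(A,\nwarrow,\nearrow)$. The guiding observation is that each defining identity of a $3$-Hom-L-dendriform algebra is ``homogeneous'' in $\alpha$: the untwisted case is the $\alpha=\mathrm{id}$ specialization, and conversely inserting $\alpha$'s on the outer arguments and composing once more with $\alpha$ reproduces the Hom-version. Concretely, I would first record the auxiliary brackets for the twisted structure, namely that $\{x_1,x_2,x_3\}^h_\alpha := \nwarrow_\alpha(x_1,x_2,x_3)+\nearrow_\alpha(x_1,x_2,x_3)-\nearrow_\alpha(x_2,x_1,x_3) = \{\alpha(x_1),\alpha(x_2),\alpha(x_3)\}^h$, and similarly $\{x_1,x_2,x_3\}^v_\alpha = \{\alpha(x_1),\alpha(x_2),\alpha(x_3)\}^v$ and $[x_1,x_2,x_3]^C_\alpha = [\alpha(x_1),\alpha(x_2),\alpha(x_3)]^C$; this follows since $\alpha$ is a linear map and the brackets are defined by the same linear combinations. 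This reduces everything to substitution.

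Next I would dispatch the identities one at a time. The skew-symmetry \eqref{3-Hom-L-dendriform0} for $\nwarrow_\alpha$ is immediate from that for $\nwarrow$ applied to $\alpha(x_1),\alpha(x_2),\alpha(x_3)$. For \eqref{3-Hom-L-dendriform1}, the left-hand side for the twisted structure is
$\nwarrow(\alpha^2(x_1),\alpha^2(x_2),\nwarrow(\alpha^2(x_3),\alpha^2(x_4),\alpha^2(x_5)))-\nwarrow(\alpha^2(x_3),\alpha^2(x_4),\nwarrow(\alpha^2(x_1),\alpha^2(x_2),\alpha^2(x_5)))$,
where I have used $\nwarrow_\alpha(\alpha(a),\alpha(b),\alpha(c))=\nwarrow(\alpha^2(a),\alpha^2(b),\alpha^2(c))$ and multiplicativity to collect the $\alpha$'s; the right-hand side becomes $\nwarrow([\alpha^2(x_1),\alpha^2(x_2),\alpha^2(x_3)]^C,\alpha^2(x_4),\alpha^2(x_5))-(\cdots)$, which is exactly \eqref{3-Hom-L-dendriform1} for the original algebra evaluated at $\alpha^2(x_i)$. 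The same pattern handles \eqref{3-Hom-L-dendriform2}, \eqref{3-Hom-L-dendriform3}, \eqref{3-Hom-L-dendriform6}: in each one replaces every argument by its $\alpha^2$-image, using the bracket-compatibility from the previous paragraph to match the $\{\cdot,\cdot,\cdot\}^h$, $\{\cdot,\cdot,\cdot\}^v$, $[\cdot,\cdot,\cdot]^C$ terms. For the two ``cyclic'' identities \eqref{3-Hom-L-dendriform4} and \eqref{3-Hom-L-dendriform5}, one also notes that $\circlearrowleft_{1,2,3}$ commutes with the substitution $x_i\mapsto\alpha^2(x_i)$ for $i=1,2,3$, so again the twisted identity is the original one evaluated on $\alpha^2$-images.

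The only point requiring a little care — and the step I expect to be the main (mild) obstacle — is bookkeeping of the powers of $\alpha$: one must check that on both sides exactly the same total number of $\alpha$'s is applied to each argument, so that the substitution instance of the original identity is literally what appears. This is where multiplicativity $\alpha\circ\nwarrow = \nwarrow\circ\alpha^{\otimes 3}$ (and likewise for $\nearrow$) is used repeatedly to move $\alpha$ outward or inward at will. Once the counting is seen to work uniformly, each of the seven identities is a one-line deduction from its untwisted counterpart, and the proposition follows. I would present the verification of \eqref{3-Hom-L-dendriform1} (or \eqref{3-Hom-L-dendriform2}, whichever is most representative) in full and remark that the remaining identities are obtained in the same way.
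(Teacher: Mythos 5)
Your proposal is correct and follows essentially the same route as the paper: verify skew-symmetry directly, observe that the twisted brackets $\{\cdot,\cdot,\cdot\}^h_\alpha$, $\{\cdot,\cdot,\cdot\}^v_\alpha$, $[\cdot,\cdot,\cdot]^C_\alpha$ are the original ones evaluated on $\alpha$-images, and then reduce each identity \eqref{3-Hom-L-dendriform1}--\eqref{3-Hom-L-dendriform6} to the untwisted identity evaluated at $\alpha^2(x_i)$, exactly as the paper does for \eqref{3-Hom-L-dendriform1} before invoking ``similarly''. You also rightly make explicit the multiplicativity $\alpha\circ\nwarrow=\nwarrow\circ\alpha^{\otimes 3}$ (and likewise for $\nearrow$), which the paper uses implicitly under its blanket multiplicativity convention.
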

\begin{proof}
For any $x_1,x_2,x_3\in A$, we have:
$$\nw_\alpha(x_1,x_2,x_3)+ \nw_\alpha(x_2,x_1,x_3)=\nw(\alpha(x_1),\alpha(x_2),\alpha(x_3))+ \nw(\alpha(x_2),\alpha(x_1),\alpha(x_3))=0.$$ Moreover, we have
 for  $x_1,x_2,x_3,x_4,x_5\in A$ \begin{align*}
& \nw_\alpha(\alpha(x_1),\alpha(x_2),\nw_\alpha(x_3,x_4,x_5))-  \nw_\alpha(\alpha(x_3),\alpha(x_4),\nw_\alpha(x_1,x_2,x_5)) \\
&=\nw(\alpha^2(x_1),\alpha^2(x_2),\nw(\alpha^2(x_3),\alpha^2(x_4),\alpha^2(x_5)))-  \nw(\alpha^2(x_3),\alpha^2(x_4),\nw(\alpha^2(x_1),\alpha^2(x_2),\alpha^2(x_5)))\\
&=\nw_\alpha([x_1,x_2,x_3]^C,\alpha(x_4),\alpha(x_5))-\nw_\alpha([x_1,x_2,x_4]^C,\alpha(x_3),\alpha(x_5)).
\end{align*}
Which imply  \eqref{3-Hom-L-dendriform1}.
 Similarly, we obtain the identities \eqref{3-Hom-L-dendriform2}-\eqref{3-Hom-L-dendriform6}.

\end{proof}

\begin{pro}\label{3LDendTo3PreLie}
Let $(A,\nw,\ne,\alpha)$ be a $3$-Hom-L-dendriform algebra. The bracket defined in \eqref{accolade horizintal} $($respectively \eqref{accolade vertical}$)$ defines a $3$-Hom-pre-Lie algebra structure on $A$ which is called the associated  horizontal  $($ respectively associated vertical $)$ $3$-Hom-pre-Lie algebra of  $(A,\nw,\ne,\alpha)$  and $(A,\nw,\ne,\alpha)$ is also called  a compatible $3$-Hom-L-dendriform algebra structure  on the  $3$-Hom-pre-Lie algebra $(A,\{\c,\c,\c\}^h,\alpha)$\\ $($ respectively $(A,\{\c,\c,\c\}^v,\alpha)$ $)$.
\end{pro}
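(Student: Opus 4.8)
The plan is to check, for the $4$-uple $(A,\nw,\ne,\alpha)$, that the bracket $\{\cdot,\cdot,\cdot\}^h$ of \eqref{accolade horizintal} satisfies the three defining identities \eqref{3-pre-Lie 0}, \eqref{3-hom-pre-Lie 1} and \eqref{3-hom-pre-Lie 2} of a $3$-Hom-pre-Lie algebra; the assertion for $\{\cdot,\cdot,\cdot\}^v$ is then entirely parallel. Two of these are cheap. The skew-symmetry \eqref{3-pre-Lie 0} is read off \eqref{accolade horizintal}: in $\{x,y,z\}^h+\{y,x,z\}^h$ the two $\ne$-terms cancel and the surviving $\nw(x,y,z)+\nw(y,x,z)$ vanishes by \eqref{3-Hom-L-dendriform0}. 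Multiplicativity of $\alpha$ for $\{\cdot,\cdot,\cdot\}^h$, and hence for the $3$-commutator $[\cdot,\cdot,\cdot]^C$ of \eqref{crochet}, is inherited from its multiplicativity for $\nw$ and $\ne$.

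For \eqref{3-hom-pre-Lie 1} and \eqref{3-hom-pre-Lie 2} the idea is to expand $\{\cdot,\cdot,\cdot\}^h$ via \eqref{accolade horizintal} at every occurrence --- including inside the copies of $[\cdot,\cdot,\cdot]^C$, where \eqref{crochet} lets one use, at each spot, whichever of $\circlearrowleft\{\cdot,\cdot,\cdot\}^h$ or $\circlearrowleft\{\cdot,\cdot,\cdot\}^v$ makes the bookkeeping close --- and then to group the resulting quadratic expressions in $\nw$ and $\ne$. The seven identities \eqref{3-Hom-L-dendriform1}--\eqref{3-Hom-L-dendriform6}, together with the auxiliary relations \eqref{accolade horizintal}--\eqref{crochet}, are precisely the pieces into which the two pre-Lie identities decompose: the purely-$\nw$ part of \eqref{3-hom-pre-Lie 1} is \eqref{3-Hom-L-dendriform1} (after using the skew-symmetry of $\nw$ on the term $\nw(\alpha(x_3),[x_1,x_2,x_4]^C,\alpha(x_5))$), the purely-$\nw$ part of \eqref{3-hom-pre-Lie 2} is \eqref{3-Hom-L-dendriform4}, and the remaining mixed terms are accounted for by \eqref{3-Hom-L-dendriform2}, \eqref{3-Hom-L-dendriform3}, \eqref{3-Hom-L-dendriform5} and \eqref{3-Hom-L-dendriform6} once the inner brackets are rewritten. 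The skew-symmetry \eqref{3-pre-Lie 0}, already available, is used throughout to bring inner arguments into the positions in which they appear in \eqref{3-Hom-L-dendriform1}--\eqref{3-Hom-L-dendriform6}. One may alternatively package this via Definition \ref{defi:rep}: setting $L^h(x,y)z:=\{x,y,z\}^h$, identity \eqref{3-hom-pre-Lie 2} is exactly \eqref{Rep3HomLie2} for $\rho=L^h$ and $\phi=\alpha$, while \eqref{Rep3HomLie1} for $\rho=L^h$ follows from \eqref{3-hom-pre-Lie 1} together with \eqref{3-pre-Lie 0}.

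The main obstacle is purely organizational. Each of \eqref{3-hom-pre-Lie 1} and \eqref{3-hom-pre-Lie 2} unfolds into a large collection of quadratic $\nw$/$\ne$-terms, and the cancellations against \eqref{3-Hom-L-dendriform1}--\eqref{3-Hom-L-dendriform6} materialize only if, at each occurrence, one selects the correct presentation of $[\cdot,\cdot,\cdot]^C$ (and of the inner horizontal product) among the equivalent forms \eqref{accolade horizintal}--\eqref{crochet}; deciding in advance which dendriform axiom is to absorb each term-type, and then carrying the matching through consistently, is the crux. The vertical statement follows by repeating the computation with \eqref{accolade vertical} in place of \eqref{accolade horizintal}, reading \eqref{3-Hom-L-dendriform1}--\eqref{3-Hom-L-dendriform6} in their ``$v$''-form.
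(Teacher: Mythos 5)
Your proposal is correct and follows essentially the same route as the paper's proof: one expands \eqref{3-hom-pre-Lie 1} and \eqref{3-hom-pre-Lie 2} for $\{\cdot,\cdot,\cdot\}^h$ via \eqref{accolade horizintal}, rewrites the inner occurrences of $[\cdot,\cdot,\cdot]^C$ using \eqref{accolade vertical}--\eqref{crochet} and the skew-symmetry \eqref{3-Hom-L-dendriform0}, and groups the resulting terms into blocks that vanish by \eqref{3-Hom-L-dendriform1}--\eqref{3-Hom-L-dendriform3} and \eqref{3-Hom-L-dendriform4}--\eqref{3-Hom-L-dendriform6} respectively (the paper's $r_1,\dots,r_5$ and $s_1,\dots,s_5$), with the vertical case handled analogously.
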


\begin{proof}
We will just prove that $(A,\{\c,\c,\c\}^h,\alpha)$ is a $3$-Hom-pre-Lie algebra.\\
Note, first that $\{x,y,z\}^h=-\{y,x,z\}^h$, for any $x,y,z \in A$.
Let $x_i \in A,\ 1 \leq i \leq 5$, then
\begin{align*}
&\{\alpha(x_1),\alpha(x_2),\{x_3,x_4,x_5\}^h\}^h-\{\alpha(x_3),\alpha(x_4),\{x_1,x_2,x_5\}^h\}^h \\
&-\{[x_1,x_2,x_3]^C,\alpha(x_4),\alpha(x_5)\}^h+\{[x_1,x_2,x_4]^C,\alpha(x_3),\alpha(x_5)\}^h\\
&= r_1+ r_2+r_3+r_4+r_5,
\end{align*}
where
\begin{align*}
r_1&= \nw(\alpha(x_1),\alpha(x_2),\nw(x_3,x_4,x_5))-  \nw(\alpha(x_3),\alpha(x_4),\nw(x_1,x_2,x_5))  -\nw([x_1,x_2,x_3]^C,\alpha(x_4),\alpha(x_5))\\
&+\nw([x_1,x_2,x_4]^C,\alpha(x_3),\alpha(x_5)), \\
r_2&= \ne(\alpha(x_3),[x_1,x_2,x_4]^C,\alpha(x_5))+\ne(\alpha(x_3),\alpha(x_4),\{x_1,x_2,x_5\}^h) - \nw(\alpha(x_1),\alpha(x_2),\ne(x_3,x_4,x_5))  \\
&+\ne(\{x_1,x_2,x_3\}^v,\alpha(x_4),\alpha(x_5)) ,\\
r_3&= \ne(\alpha(x_4),[x_1,x_2,x_3]^C,\alpha(x_5))+\ne(\alpha(x_4),\alpha(x_3),\{x_1,x_2,x_5\}^h)- \nw(\alpha(x_1),\alpha(x_2),\ne(x_4,x_3,x_5))\\
&  +\ne(\{x_1,x_2,x_4\}^v,\alpha(x_3),\alpha(x_5)),\\
r_4&= \ne(\alpha(x_1),\alpha(x_2),\{x_3,x_4,x_5\}^h)-\nw(\alpha(x_3),\alpha(x_4),\ne(x_1,x_2,x_5))
 \nonumber-\ne(\{x_2,x_3,x_1\}^v,\alpha(x_4),\alpha(x_5))
 \\
&+\ne(\{x_2,x_4,x_1\}^v,\alpha(x_3),\alpha(x_5)) ,\\
 r_5&= \ne(\alpha(x_2),\alpha(x_1),\{x_3,x_4,x_5\}^h)-\nw(\alpha(x_3),\alpha(x_4),\ne(x_2,x_1,x_5))
 \nonumber-\ne(\{x_1,x_3,x_2\}^v,\alpha(x_4),\alpha(x_5))
\\
& +\ne(\{x_1,x_4,x_2\}^v,\alpha(x_3),\alpha(x_5)).
\end{align*}
From identities \eqref{3-Hom-L-dendriform1}-\eqref{3-Hom-L-dendriform3}, we obtain immediately $r_i=0,\ \forall 1 \leq i \leq 5$.
This implies that Eq.\eqref{3-hom-pre-Lie 1} holds.

On the other hand, we have
{\small\begin{align*}
&\{ [x_1,x_2,x_3]^C,\alpha(x_4), \alpha(x_5)\}^h-\{\alpha(x_1),\alpha(x_2),\{ x_3,x_4, x_5\}^h\}^h-\{\alpha(x_2),\alpha(x_3),\{ x_1,x_4,x_5\}^h\}^h -\{\alpha(x_3),\alpha(x_1),\{ x_2,x_4, x_5\}^h\}^h \\
=& s_1+s_2+s_3+s_4+s_5,
\end{align*}}
where
\begin{align*}
&s_1=\nw([x_1,x_2,x_3]^C,\alpha(x_4),\alpha(x_5))-
\circlearrowleft_{1,2,3}\nw(\alpha(x_1),\alpha(x_2),\nw(x_3,x_4,x_5)),\\
&s_2= \ne(\alpha(x_4),[x_1,x_2,x_3]^C,\alpha(x_5))-
\circlearrowleft_{1,2,3}\nw(\alpha(x_1),\alpha(x_2),\ne(x_4,x_3,x_5)) ,\\
&s_3=\nw(\alpha(x_1),\alpha(x_2),\ne(x_3,x_4,x_5))+\ne(\alpha(x_3),\alpha(x_1),\{x_2,x_3,x_5\}^h)\\
&\hspace{2 cm} -\ne(\alpha(x_3),\alpha(x_2),\{x_1,x_4,x_5\}^h)-\ne(\{x_1,x_2,x_3\}^v,\alpha(x_4),\alpha(x_5)),\\
&s_4= \nw(\alpha(x_2),\alpha(x_3),\ne(x_1,x_4,x_5))+\ne(\alpha(x_1),\alpha(x_2),\{x_3,x_4,x_5\}^h)\\
&\hspace{2 cm} -\ne(\alpha(x_1),\alpha(x_3),\{x_2,x_4,x_5\}^h)-\ne(\{x_2,x_3,x_1\}^v,\alpha(x_4),\alpha(x_5)),\\
&s_5=  \nw(\alpha(x_3),\alpha(x_1),\ne(x_2,x_4,x_5))+\ne(\alpha(x_2),\alpha(x_3),\{x_1,x_4,x_5\}^h)\\
&\hspace{2 cm} -\ne(\alpha(x_2),\alpha(x_1),\{x_3,x_4,x_5\}^h)-\ne(\{x_3,x_1,x_2\}^v,\alpha(x_4),\alpha(x_5)).
\end{align*}
From identities \eqref{3-Hom-L-dendriform4}-\eqref{3-Hom-L-dendriform6}, we obtain immediately $s_i=0,\ \forall 1 \leq i \leq 5$.
This implies that Eq.\eqref{3-hom-pre-Lie 2} holds.
\end{proof}
It is straightforward to verify that:
\begin{cor}
Let $(A,\nw,\ne,\alpha)$ be  a $3$-Hom-L-dendriform algebra. Then the bracket given by  \eqref{crochet} together with $\alpha$ define a $3$-Hom-Lie algebra structure on $A$ which is called the associated   $3$-Hom-Lie algebra of  $(A,\nw,\ne,\alpha)$.
\end{cor}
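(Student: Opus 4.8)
The plan is to obtain this as a direct consequence of Proposition~\ref{3LDendTo3PreLie} together with the sub-adjacent construction for $3$-Hom-pre-Lie algebras recalled after Eq.~\eqref{eq:3cc}. First I would invoke Proposition~\ref{3LDendTo3PreLie}: the horizontal product $\{\c,\c,\c\}^h$ of \eqref{accolade horizintal} makes $(A,\{\c,\c,\c\}^h,\a)$ into a $3$-Hom-pre-Lie algebra. Then, applying the cited proposition on sub-adjacent $3$-Hom-Lie algebras to $(A,\{\c,\c,\c\}^h,\a)$, the induced $3$-commutator \[ [x,y,z] = \{x,y,z\}^h+\{y,z,x\}^h+\{z,x,y\}^h = \circlearrowleft_{x,y,z}\{x,y,z\}^h \] together with $\a$ defines a $3$-Hom-Lie algebra structure on $A$. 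Since this bracket coincides with $[\c,\c,\c]^C$ of \eqref{crochet}, the corollary follows.

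The one small point to record is the well-definedness built into \eqref{crochet}, i.e. that $\circlearrowleft_{x,y,z}\{x,y,z\}^h=\circlearrowleft_{x,y,z}\{x,y,z\}^v$. This is checked by substituting \eqref{accolade horizintal} and \eqref{accolade vertical}: in each cyclic sum the $\nw$-terms already agree, and a short permutation bookkeeping shows that the cyclic sums of the $\ne$-terms also coincide. Consequently one obtains the same $3$-Hom-Lie bracket whether one passes through the horizontal or, by the parallel part of Proposition~\ref{3LDendTo3PreLie}, the vertical $3$-Hom-pre-Lie algebra, so the associated $3$-Hom-Lie algebra is canonically attached to $(A,\nw,\ne,\a)$.

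There is no genuine obstacle here; the statement is purely formal once Proposition~\ref{3LDendTo3PreLie} is available. Were one to want a proof that does not cite it, the work would be to verify directly that $[\c,\c,\c]^C$ is skew-symmetric in its first two slots — immediate from \eqref{3-Hom-L-dendriform0} after cyclic symmetrisation — and that it satisfies the fundamental identity \eqref{eq:de1}; but unravelling the latter reduces, after expanding $\{\c,\c,\c\}^h$ via \eqref{accolade horizintal}, to an appropriate permutation-sum of the defining axioms \eqref{3-Hom-L-dendriform1}--\eqref{3-Hom-L-dendriform6}, which is exactly the computation already carried out inside the proof of Proposition~\ref{3LDendTo3PreLie} combined with the sub-adjacent result. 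Hence it suffices to chain the two established facts, and the proof is a one-line reduction.
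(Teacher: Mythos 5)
Your proposal is correct and is exactly the route the paper intends: the paper states this as a straightforward corollary of Proposition~\ref{3LDendTo3PreLie} combined with the sub-adjacent construction $[x,y,z]^C=\circlearrowleft_{x,y,z}\{x,y,z\}^h$ for $3$-Hom-pre-Lie algebras, which is what you do. Your extra check that $\circlearrowleft_{x,y,z}\{x,y,z\}^h=\circlearrowleft_{x,y,z}\{x,y,z\}^v$ is a harmless (and correct) verification of the well-definedness already built into \eqref{crochet}.
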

The following Proposition is obvious.
\begin{pro}
Let $(A,\nw,\ne,\alpha)$ be a $3$-Hom-L-dendriform algebra. Define $L_{\nw},R_{\ne}: \otimes^2 A \to gl(A)$ by
$$L_{\nw}(x,y)z=\nw(x,y,z),\quad  R_{\ne}(x,y)z=\ne(z,x,y),\ \forall x,y,z \in A.$$
Then $(A,L_{\nw},R_{\ne},\alpha)$ is a representation of  its  associated   $3$-Hom-pre-Lie algebra $(A,\{\c,\c,\c\},\alpha)$.
\end{pro}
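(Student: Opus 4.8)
The plan is to verify directly that the triple $(A,L_{\nw},R_{\ne},\alpha)$ satisfies the defining axioms of a representation of the $3$-Hom-pre-Lie algebra $(A,\{\cdot,\cdot,\cdot\}^h,\alpha)$ as listed in Definition~\ref{def-rep-3-hompre}, with $V=A$, $\phi=\alpha$, $l=L_{\nw}$ and $r=R_{\ne}$. First I would record the bookkeeping identities: from the definitions $L_{\nw}(x,y)z=\nw(x,y,z)$ and $R_{\ne}(x,y)z=\ne(z,x,y)$, together with $[\cdot,\cdot,\cdot]^C$ and $\{\cdot,\cdot,\cdot\}^h$, one computes what the operator $\mu(x,y)=L_{\nw}(x,y)+R_{\ne}(x,y)-R_{\ne}(y,x)$ does to an element $z$; unravelling \eqref{accolade horizintal} shows $\mu(x,y)z=\{x,y,z\}^h$, i.e.\ $\mu=L_{\{\cdot,\cdot,\cdot\}^h}$ is the left multiplication of the associated horizontal $3$-Hom-pre-Lie algebra. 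Similarly one checks that the ``vertical'' combination appearing through $R_{\ne}$ reproduces $\{\cdot,\cdot,\cdot\}^v$, and then invokes \eqref{crochet} to identify both induced brackets with $[\cdot,\cdot,\cdot]^C$. This translation step is what converts each axiom of Definition~\ref{def-rep-3-hompre} into one of the structural identities \eqref{3-Hom-L-dendriform0}--\eqref{3-Hom-L-dendriform6}.

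Next, I would match axioms one by one. The multiplicativity conditions $\phi\circ l(x_1,x_2)=l(\alpha(x_1),\alpha(x_2))\circ\phi$ and the analogue for $r$ are immediate from the multiplicativity of $\alpha$ with respect to $\nw$ and $\ne$. The fact that $l=L_{\nw}$ is a representation of the sub-adjacent $3$-Hom-Lie algebra $A^c=(A,[\cdot,\cdot,\cdot]^C,\alpha)$ is exactly the content of Proposition~\ref{3LDendTo3PreLie} combined with the remark that for a $3$-Hom-pre-Lie algebra the left multiplication gives a representation of the sub-adjacent $3$-Hom-Lie algebra (stated just before the display defining $L$ and $R$ in Section~2); so \eqref{Rep3HomLie1} and \eqref{Rep3HomLie2} for $L_{\nw}$ follow from \eqref{3-Hom-L-dendriform1} and \eqref{3-Hom-L-dendriform4} after applying the cyclic sum in the definition of $[\cdot,\cdot,\cdot]^C$. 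For the mixed axioms \eqref{rep1}--\eqref{rep4}: axiom \eqref{rep2} should correspond directly to \eqref{3-Hom-L-dendriform5} (rewritten via $R_{\ne}(x,y)z=\ne(z,x,y)$ and the cyclic sum $\circlearrowleft_{1,2,3}$ on the right), axiom \eqref{rep1} to the combination of \eqref{3-Hom-L-dendriform2} with \eqref{3-Hom-L-dendriform6}, axiom \eqref{rep3} to \eqref{3-Hom-L-dendriform3}, and axiom \eqref{rep4} to \eqref{3-Hom-L-dendriform6}, after replacing every occurrence of $\{x_1,x_2,x_3\}^h$ and $\{x_1,x_2,x_3\}^v$ by their expansions \eqref{accolade horizintal}, \eqref{accolade vertical} and using $[\cdot,\cdot,\cdot]^C=\circlearrowleft\{\cdot,\cdot,\cdot\}^h$.

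The main obstacle I expect is purely notational: the axioms of Definition~\ref{def-rep-3-hompre} are written in terms of abstract $l,r,\mu$ acting on a module, while identities \eqref{3-Hom-L-dendriform1}--\eqref{3-Hom-L-dendriform6} are written in terms of $\nw,\ne$ acting on five elements of $A$, and the dictionary between the two reshuffles arguments and introduces cyclic sums; one has to be careful that the ``module'' slot of $R_{\ne}$ is the \emph{first} argument of $\ne$, not the last, so several terms in \eqref{rep1}--\eqref{rep4} that look like $r(\alpha(x_i),\{\cdots\})\circ\phi$ unfold into $\ne$-expressions with $\alpha^2$ inside, and matching them to \eqref{3-Hom-L-dendriform2}, \eqref{3-Hom-L-dendriform6} requires using multiplicativity to pull an $\alpha$ through. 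Since none of this is conceptually deep, I would present the proof by stating the identification $\mu=L_{\{\cdot,\cdot,\cdot\}^h}$, citing Proposition~\ref{3LDendTo3PreLie} for the $3$-Hom-Lie part, and then exhibiting the term-by-term correspondence of \eqref{rep1}--\eqref{rep4} with \eqref{3-Hom-L-dendriform2}, \eqref{3-Hom-L-dendriform3}, \eqref{3-Hom-L-dendriform5}, \eqref{3-Hom-L-dendriform6}, leaving the fully expanded verification to the reader as ``straightforward''.
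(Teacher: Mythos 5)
Your overall strategy is the natural (and essentially only) one --- the paper itself offers no argument beyond declaring the proposition obvious, so a direct unfolding of Definition~\ref{def-rep-3-hompre} with $V=A$, $\phi=\alpha$, $l=L_{\nw}$, $r=R_{\ne}$ against the axioms \eqref{3-Hom-L-dendriform0}--\eqref{3-Hom-L-dendriform6} is exactly what is called for. However, your key dictionary entry is wrong: since the module slot of $R_{\ne}$ is the \emph{first} argument of $\ne$, one has $\mu(x,y)z=L_{\nw}(x,y)z+R_{\ne}(x,y)z-R_{\ne}(y,x)z=\nw(x,y,z)+\ne(z,x,y)-\ne(z,y,x)$, which by \eqref{accolade vertical} is $\{x,y,z\}^v$, \emph{not} $\{x,y,z\}^h$ (that would require $\ne(x,y,z)-\ne(y,x,z)$). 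This is not a cosmetic slip, because the whole point of the matching is the $h/v$ split: in \eqref{rep1}, \eqref{rep3}, \eqref{rep4} the operator $\mu$ must unfold to the vertical bracket sitting in the first slot of $\ne$, while the bracket $\{\cdot,\cdot,\cdot\}$ of the algebra being represented is the horizontal one sitting in the third slot, and it is precisely this pattern that appears in \eqref{3-Hom-L-dendriform2}, \eqref{3-Hom-L-dendriform3}, \eqref{3-Hom-L-dendriform6}. With your identification $\mu=L_{\{\cdot,\cdot,\cdot\}^h}$, the terms that should read $\ne(\{x_1,x_2,x_5\}^v,\alpha(x_3),\alpha(x_4))$ would come out as $\ne(\{x_1,x_2,x_5\}^h,\alpha(x_3),\alpha(x_4))$, and the verification of \eqref{rep1}, \eqref{rep3}, \eqref{rep4} would fail, since these two expressions differ in general.

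With the corrected identification ($\mu(x,y)z=\{x,y,z\}^v$, ambient bracket $=\{\cdot,\cdot,\cdot\}^h$, which also resolves the ambiguity in the statement of which ``associated'' pre-Lie structure is meant), the matching is cleaner than you anticipate: applying each axiom to an element $x_5$ gives \emph{exact} coincidences, namely \eqref{rep1} is \eqref{3-Hom-L-dendriform2} alone (no combination with \eqref{3-Hom-L-dendriform6} is needed), \eqref{rep2} is \eqref{3-Hom-L-dendriform5}, \eqref{rep3} is \eqref{3-Hom-L-dendriform3}, \eqref{rep4} is \eqref{3-Hom-L-dendriform6}, and the requirement that $L_{\nw}$ be a representation of $A^c$ is, term by term, \eqref{3-Hom-L-dendriform0}, \eqref{3-Hom-L-dendriform1} and \eqref{3-Hom-L-dendriform4} read as operator identities (so no appeal to Proposition~\ref{3LDendTo3PreLie} or extra cyclic sums is required). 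Your worry about $\alpha^2$ is also unfounded: for instance $r(\alpha(x_3),\{x_1,x_2,x_4\}^h)\circ\phi$ applied to $x_5$ is simply $\ne(\alpha(x_5),\alpha(x_3),\{x_1,x_2,x_4\}^h)$, and the conditions $\phi\circ l(x_1,x_2)=l(\alpha(x_1),\alpha(x_2))\circ\phi$, $\phi\circ r(x_1,x_2)=r(\alpha(x_1),\alpha(x_2))\circ\phi$ follow from multiplicativity of $\alpha$ with no further manipulation. Once you replace the erroneous claim $\mu=L_{\{\cdot,\cdot,\cdot\}^h}$ by $\mu(x,y)z=\{x,y,z\}^v$, your plan goes through verbatim.
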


\begin{thm}\label{3HomLden by O-op}
 Let $(A,\{\c,\c,\c\},\alpha)$ be a $3$-Hom-pre-Lie algebra and $(V,l,r,\phi)$ be a representation. Suppose that  $T: V \to A$ is an $\mathcal{O}$-operator associated to $(V,l,r,\phi)$. Then $(V,\nw,\ne,\phi)$ is a $3$-Hom-L-dendriform algebra, where
 \begin{align}
  \nw(u,v,w)=l(Tu,Tv)w  , \quad \ne(u, v,w)= r(Tv,Tw)u, \forall u,v,w \in V.
 \end{align}
 Therefore, there exists an associated $3$-Hom-pre-Lie algebra structure on $V$ and $T$ is a homomorphism of $3$-Hom-pre-Lie algebras. Moreover, $T(V)=\{T(v)| v \in V \}$ is $3$-Hom-pre-Lie subalgebra of $(A,\{\c,\c,\c\},\alpha)$ and there is an induced $3$-Hom-L-dendriform algebra structure on $T(V)$ given by
  \begin{align}
  \nw(Tu,Tv,Tw)=T(  \nw(u,v,w))  , \quad \ne(Tu, Tv,Tw)= T(\ne(u, v,w)), \forall u,v,w \in V.
 \end{align}

\end{thm}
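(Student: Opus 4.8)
The plan is to verify directly that the pair of operations $\nw(u,v,w)=l(Tu,Tv)w$ and $\ne(u,v,w)=r(Tv,Tw)u$ satisfy the seven defining axioms \eqref{3-Hom-L-dendriform0}--\eqref{3-Hom-L-dendriform6} of a $3$-Hom-L-dendriform algebra, using only the fact that $(V,l,r,\phi)$ is a representation of $(A,\{\cdot,\cdot,\cdot\},\a)$ and that $T$ is an $\mathcal{O}$-operator (so $T\phi=\a T$ and the Rota--Baxter-type identity of \eqref{O-op 3-pre-Lie} holds). The first step is a dictionary lemma: I would show that under these definitions the induced horizontal product $\{u,v,w\}^h=\nw(u,v,w)+\ne(u,v,w)-\ne(v,u,w)$ equals $l(Tu,Tv)w-r(Tu,Tw)v+r(Tv,Tw)u$, and hence by the $\mathcal{O}$-operator identity that $T\{u,v,w\}^h=\{Tu,Tv,Tw\}$. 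Consequently $T$ intertwines the bracket $[\cdot,\cdot,\cdot]^C$ on $V$ (defined by \eqref{crochet}) with the sub-adjacent bracket $[\cdot,\cdot,\cdot]^C$ on $A$: $T[u,v,w]^C=[Tu,Tv,Tw]^C$. I would also record that $\{u,v,w\}^v$ maps under $T$ to $\{Tu,Tv,Tw\}$ as well, since both equal $[\cdot]^C$-cyclic sums; more useful is the pointwise identity $\{u,v,w\}^v = l(Tu,Tv)w + r(Tw,Tu)v - r(Tw,Tv)u$, which one checks again using the $\mathcal{O}$-operator relation only after applying $T$ — but here it will be cleaner to keep $\{\cdot\}^h,\{\cdot\}^v$ as formal combinations and push $T$ through at the end.

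The second and main step is the seven axiom verifications. Axiom \eqref{3-Hom-L-dendriform0} is immediate from skew-symmetry of $l$ in its two $A$-arguments (since $l$ is a representation of the $3$-Hom-Lie algebra $A^c$, $l(Tu,Tv)=-l(Tv,Tu)$). For the remaining six, the strategy is uniform: expand each $\nw,\ne$ in terms of $l,r$ and $T$; use $T\phi=\a T$ to move $\phi$ past $T$; use the dictionary lemma to replace every occurrence of $T\{\cdot,\cdot,\cdot\}^h$ or $T[\cdot,\cdot,\cdot]^C$ arising inside a slot of $l$ or $r$ by the corresponding bracket of $T$-images in $A$; and then recognize the resulting identity as exactly one of the representation axioms \eqref{rep1}--\eqref{rep4} of Definition~\ref{def-rep-3-hompre} (together with the $l$-equivariance axioms for the $3$-Hom-Lie algebra $A^c$) evaluated at the points $Tu,Tv,Tw,\dots\in A$. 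Concretely, \eqref{3-Hom-L-dendriform1} and \eqref{3-Hom-L-dendriform4} will come from the fact that $(V,l,\phi)$ is a representation of $A^c$ (the two $3$-Hom-Lie representation axioms \eqref{Rep3HomLie1}, \eqref{Rep3HomLie2}); \eqref{3-Hom-L-dendriform2}, \eqref{3-Hom-L-dendriform3}, \eqref{3-Hom-L-dendriform5} and \eqref{3-Hom-L-dendriform6} will each be a transcription of one of \eqref{rep1}, \eqref{rep3}, \eqref{rep2}, \eqref{rep4} respectively, after the substitution, since the "$\mu$" appearing in those axioms, $\mu(x,y)=l(x,y)+r(x,y)-r(y,x)$, is precisely what $T$ sends $\{u,v,w\}^h$-type combinations to act by.

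The last step is the "Moreover" part. Since $T$ is a homomorphism of $3$-Hom-pre-Lie algebras (by the dictionary lemma, $T\{u,v,w\}^h=\{Tu,Tv,Tw\}$ and $T\phi=\a T$), its image $T(V)$ is closed under $\{\cdot,\cdot,\cdot\}$ and under $\a$, hence is a $3$-Hom-pre-Lie subalgebra; this also appears already as a Corollary following Proposition~\ref{pro:3hompreLieT}. To get the induced $3$-Hom-L-dendriform structure on $T(V)$, define $\nw(Tu,Tv,Tw):=T\nw(u,v,w)=T(l(Tu,Tv)w)$ and $\ne(Tu,Tv,Tw):=T\ne(u,v,w)=T(r(Tv,Tw)u)$; one must check these are well defined (independent of the chosen preimages), which follows because if $Tu=Tu'$ then $l(Tu,Tv)w=l(Tu',Tv)w$ already at the level of operators, so no ambiguity arises in the $l$- or $r$-slots, and the outer $T$ is applied to a genuinely defined element of $V$ — more care is needed only if $T$ is not injective, in which case one argues that the expressions $l(Tu,Tv)w$ etc. depend on $u,v,w$ only through $Tu,Tv$ and $w$, but $w$ itself is not determined by $Tw$; the standard fix, as in Proposition~\ref{pro:3hompreLieT}'s corollary, is that the induced operations are well defined precisely because the original ones already are, i.e. one transports the structure along the surjection $V\twoheadrightarrow T(V)$ after checking it descends — and it descends since $T$ is a homomorphism of the $3$-Hom-L-dendriform structure onto its image by construction. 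The axioms on $T(V)$ then follow by applying $T$ to the seven axioms on $V$, which we have just established. The main obstacle is purely computational: keeping the bookkeeping of the cyclic sums $\circlearrowleft_{1,2,3}$ and the many $\a^{\pm}$, $\phi^{\pm}$ twists straight while matching each of the six nontrivial axioms to the correct representation axiom; there is no conceptual difficulty once the dictionary lemma $T\{u,v,w\}^h=\{Tu,Tv,Tw\}$ is in hand.
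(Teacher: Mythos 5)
Your proposal is correct and follows essentially the same route as the paper's proof: define $\{\cdot,\cdot,\cdot\}^h_V$, $\{\cdot,\cdot,\cdot\}^v_V$, $[\cdot,\cdot,\cdot]^C_V$ on $V$, use the $\mathcal{O}$-operator identity to get $T\{u,v,w\}^h_V=\{Tu,Tv,Tw\}$ and $T[u,v,w]^C_V=[Tu,Tv,Tw]^C$, then verify the dendriform axioms by expanding in $l$, $r$, $T$ (with $T\phi=\alpha T$) and invoking exactly the representation identities you list, the paper doing \eqref{3-Hom-L-dendriform1}, \eqref{3-Hom-L-dendriform2}, \eqref{3-Hom-L-dendriform6} explicitly and leaving the rest as ``similar''. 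One harmless correction to an aside you do not actually rely on: pointwise $\{u,v,w\}^v_V=l(Tu,Tv)w+r(Tu,Tv)w-r(Tv,Tu)w=\mu(Tu,Tv)w$ (not the formula you wrote), and it is this identity, not $T\{u,v,w\}^v_V=\{Tu,Tv,Tw\}$ (which need not hold), that makes $\mu$ appear when matching \eqref{rep1}--\eqref{rep4}.
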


\begin{proof}
  Define $\{\c,\c,\c\}_V^h, \{\c,\c,\c\}_V^v,[\c,\c,\c]^C_V:\otimes^3V \to V$ by
\begin{align*}
& \{u,v,w\}_V^h=\nw(u,v,w)+\ne(u,v,w)-\ne(v,u,w),\\
& \{u,v,w\}_V^v=\nw(u,v,w)+\ne(w,u,v)-\ne(w,v,u),\\
&[u,v,w]^C_V=\circlearrowleft_{u,v,w}\{u,v,w\}_V^h=\circlearrowleft_{u,v,w}\{u,v,w\}_V^v.
\end{align*}
for any $u,v,w \in V$.\\
Using identity \eqref{O-op 3-pre-Lie}, we have
\begin{align*}
T\{u,v,w\}_V^h &=T(\nw(u,v,w)+\ne(u,v,w)-\ne(v,u,w))\\
&=T(l(Tu,Tv)w-r(Tu,Tw)v+r(Tv,Tw)u)=\{Tu,Tv,Tw\}^h
\end{align*}
and
$$T[u,v,w]^C_V=\circlearrowleft_{u,v,w}T\{u,v,w\}_V
=\circlearrowleft_{u,v,w}\{Tu,Tv,Tw\}^h=[Tu,Tv,Tw]^C.$$
It is obvious that
$$\nw(u,v,w)+ \nw(v,u,w)= ( l(Tu,Tv)+l(Tv,Tu))w=0.$$
Furthermore, for any $u_i \in V,\ 1 \leq i \leq 5$, we have
\begin{align*}
&   \nw(\phi(u_1),\phi(u_2),\nw(u_3,u_4,u_5))-  \nw(\phi(u_3),\phi(u_4),\nw(u_1,u_2,u_5)) \\
&-\nw([u_1,u_2,u_3]_V^C,\phi(u_4),\phi(u_5))+\nw([u_1,u_2,u_4]_V^C,\phi(u_3),\phi(u_5)) \\
&= l(T(\phi(u_1)),T(\phi(u_2)))l(T(u_3),T(u_4))u_5-  l(T(\phi(u_3)),T(\phi(u_4)))l(T(u_1),T(u_2))u_5 \\
&-l(T[u_1,u_2,u_3]_V^C,T(\phi(u_4)))\phi(u_5)+l(T[u_1,u_2,u_4]_V^C,T(\phi(u_3)))\phi(u_5) \\
&=l(\alpha(T(u_1)),\alpha(T(u_2)))l(T(u_3),T(u_4))u_5-  l(\alpha(T(u_3)),\alpha(T(u_4)))l(T(u_1),T(u_2))u_5 \\
&-l(T[u_1,u_2,u_3]_V^C,\alpha(T(u_4)))\phi(u_5)+l(T[u_1,u_2,u_4]_V^C,\alpha(T(u_3)))\phi(u_5) =0.
\end{align*}
This implies that \eqref{3-Hom-L-dendriform1} holds.
Moreover, \eqref{3-Hom-L-dendriform2} holds. Indeed,
\begin{align*}
& \ne(\phi(u_5),[u_1,u_2,u_3]_V^C,\phi(u_4))-\ne(\phi(u_5),\phi(u_3),\{u_1,u_2,u_4\}^h_V) \\
&- \nw(\phi(u_1),\phi(u_2),\ne(u_5,u_3,u_4)) +\ne(\{u_1,u_2,u_5\}^v_V,\phi(u_3),\phi(u_4)) \\
&=r(T[u_1,u_2,u_3]_V^C,T(\phi(u_4)))\phi(u_5)-r(T(\phi(u_3)),T\{u_1,u_2,u_4\}^h_V)\phi(u_5) \\
&-l(T(\phi(u_1)),T(\phi(u_2)))r(T(u_3),T(u_4))u_5 +r(T(\phi(u_3)),T(\phi(u_4)))\{u_1,u_2,u_5\}^v_V \\
&=r([Tu_1,Tu_2,Tu_3]^C,\alpha(T(u_4)))\phi(u_5)-r(\alpha(T(u_3)),\{T(u_1),T(u_2),T(u_4)\}^h)\phi(u_5) \\
&-l(\alpha(T(u_1)),\alpha(T(u_2)))r(T(u_3),T(u_4))u_5 +r(\alpha(T(u_3)),\alpha(T(u_4)))\mu(T(u_1),T(u_2))u_5=0.
\end{align*}
To prove identity \eqref{3-Hom-L-dendriform6}, we compute as follows
\begin{align*}
& \nw(\phi(u_1),\phi(u_2),\ne(u_5,u_3,u_4))+\ne(\phi(u_5),\phi(u_1),\{u_2,u_3,u_4\}^h_V) \\
&\hspace{1 cm} -\ne(\phi(u_5),\phi(u_2),\{u_1,u_3,u_4\}^h_V)-\ne(\{u_1,u_2,u_5\}^v_V,\phi(u_3),\phi(u_4))  \\
    &=l(\alpha(Tu_1),\alpha(Tu_2))r(Tu_3,Tu_4)u_5+r(\alpha(Tu_1),\{Tu_2,Tu_3,Tu_4\}^h)\phi(u_5)  \\
    & \hspace{1 cm} -r(\alpha(Tu_2),\{Tu_1,Tu_3,Tu_4\}^h)\phi(u_5)-r(\alpha(Tu_3),\alpha(Tu_4))\mu(Tu_1,Tu_2)u_5=0.
\end{align*}
The other identities can be shown similarly.
\end{proof}
\begin{cor}\label{3HomLden by invert O-op}
   Let $(A,\{\c,\c,\c\},\alpha)$ be a $3$-Hom-pre-Lie algebra and  $\mathcal R: A \to A$ is a Rota-Baxter operator of weight $0$. Then $(A,\nw,\ne,\alpha)$ is  a $3$-Hom-L-dendriform algebra, where the operations $\nw$ and $\ne$ are  given by
 \begin{align}\label{LDendViaRB}
  \nw(x,y,z)=L(\mathcal Rx,\mathcal Ry)z=\{\mathcal Rx,\mathcal Ry,z\}  &, \quad \ne(x, y,z)= R(\mathcal Ry,\mathcal Rz)x=\{x,\mathcal Ry,\mathcal Rz\},  \end{align}
for all $x,y,z \in A$.

\end{cor}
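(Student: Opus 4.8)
The plan is to obtain the statement as a direct specialization of Theorem~\ref{3HomLden by O-op} to the adjoint representation. Recall from the paragraph following Definition~\ref{def-rep-3-hompre} that $(A,L,R,\alpha)$ is a representation of the $3$-Hom-pre-Lie algebra $(A,\{\cdot,\cdot,\cdot\},\alpha)$ on itself, with $L(x,y)z=\{x,y,z\}$ and $R(x,y)z=\{z,x,y\}$. The first step is therefore to observe that a Rota-Baxter operator $\mathcal{R}$ of weight $0$ is precisely an $\mathcal{O}$-operator associated to this adjoint representation, i.e. that the hypotheses of Theorem~\ref{3HomLden by O-op} hold with $(V,l,r,\phi)=(A,L,R,\alpha)$ and $T=\mathcal{R}$.

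To see this I would write out the $\mathcal{O}$-operator axioms \eqref{O-op 3-pre-Lie} for $T=\mathcal{R}$. The relation $T\phi=\alpha T$ becomes $\mathcal{R}\alpha=\alpha\mathcal{R}$, which is part of the definition of a Rota-Baxter operator. The second axiom becomes
\[
\{\mathcal{R}x,\mathcal{R}y,\mathcal{R}z\}=\mathcal{R}\bigl(L(\mathcal{R}x,\mathcal{R}y)z-R(\mathcal{R}x,\mathcal{R}z)y+R(\mathcal{R}y,\mathcal{R}z)x\bigr),
\]
and unwinding $L,R$ this reads $\{\mathcal{R}x,\mathcal{R}y,\mathcal{R}z\}=\mathcal{R}\bigl(\{\mathcal{R}x,\mathcal{R}y,z\}-\{y,\mathcal{R}x,\mathcal{R}z\}+\{x,\mathcal{R}y,\mathcal{R}z\}\bigr)$; using the skew-symmetry \eqref{3-pre-Lie 0} to replace $-\{y,\mathcal{R}x,\mathcal{R}z\}$ by $\{\mathcal{R}x,y,\mathcal{R}z\}$ turns the right-hand side into $\mathcal{R}\bigl(\{\mathcal{R}x,\mathcal{R}y,z\}+\{\mathcal{R}x,y,\mathcal{R}z\}+\{x,\mathcal{R}y,\mathcal{R}z\}\bigr)$, which is exactly the defining identity of a Rota-Baxter operator of weight $0$. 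Hence the two notions coincide.

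Granting this, Theorem~\ref{3HomLden by O-op} applies verbatim and yields that $(A,\nwarrow,\nearrow,\alpha)$ is a $3$-Hom-L-dendriform algebra with $\nwarrow(x,y,z)=l(Tx,Ty)z=L(\mathcal{R}x,\mathcal{R}y)z=\{\mathcal{R}x,\mathcal{R}y,z\}$ and $\nearrow(x,y,z)=r(Ty,Tz)x=R(\mathcal{R}y,\mathcal{R}z)x=\{x,\mathcal{R}y,\mathcal{R}z\}$, which are precisely the operations~\eqref{LDendViaRB}. One may also record, as in Theorem~\ref{3HomLden by O-op}, that $\mathcal{R}(A)$ carries an induced $3$-Hom-L-dendriform subalgebra structure and that $\mathcal{R}$ is then a homomorphism onto the associated $3$-Hom-pre-Lie algebra of $\mathcal{R}(A)$.

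I do not anticipate a real obstacle here; the only point needing care is the sign-and-position bookkeeping in the middle step, since $R(x,y)z=\{z,x,y\}$ keeps the acting arguments in the last two slots while the Rota-Baxter identity spreads them over the bracket, so one must invoke \eqref{3-pre-Lie 0} at exactly the right moment. If a self-contained proof is preferred, one could instead substitute \eqref{LDendViaRB} directly into \eqref{3-Hom-L-dendriform0}--\eqref{3-Hom-L-dendriform6} and reduce each identity to \eqref{3-pre-Lie 0}, \eqref{3-hom-pre-Lie 1}, \eqref{3-hom-pre-Lie 2}, the multiplicativity $\mathcal{R}\alpha=\alpha\mathcal{R}$, and the Rota-Baxter relation; but that merely reproduces the computation already carried out in the proof of Theorem~\ref{3HomLden by O-op}.
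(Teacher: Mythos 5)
Your proposal is correct and follows the paper's own (implicit) route: the corollary is read off from Theorem \ref{3HomLden by O-op} by taking the adjoint representation $(A,L,R,\alpha)$ and $T=\mathcal R$, the paper having already defined a weight-zero Rota-Baxter operator as exactly an $\mathcal O$-operator in the case $V=A$. Your sign bookkeeping via \eqref{3-pre-Lie 0}, turning $-\{y,\mathcal Rx,\mathcal Rz\}$ into $\{\mathcal Rx,y,\mathcal Rz\}$, is the right (and only) point requiring care, and it is handled correctly.
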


\begin{cor}\label{3HomLden by invert O-op1}
 Let $(A,\{\c,\c,\c\},\alpha)$ be a $3$-Hom-pre-Lie algebra.  Then there exists a compatible
3-Hom-L-dendriform  algebra if and only if there exists an invertible $\mathcal{O}$-operator on $A$.
\end{cor}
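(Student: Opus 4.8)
The plan is to prove the two implications separately: sufficiency from Theorem~\ref{3HomLden by O-op} together with a transport-of-structure argument, and necessity by exhibiting $\mathrm{id}_A$ as an $\mathcal{O}$-operator. Throughout, ``compatible'' is understood via the horizontal bracket \eqref{accolade horizintal}, in keeping with Theorem~\ref{3HomLden by O-op}.

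For the sufficiency, suppose $T:V\to A$ is an invertible $\mathcal{O}$-operator associated to a representation $(V,l,r,\phi)$ of $(A,\{\c,\c,\c\},\alpha)$. First I would apply Theorem~\ref{3HomLden by O-op} to get the $3$-Hom-L-dendriform algebra $(V,\nw_V,\ne_V,\phi)$, with $\nw_V(u,v,w)=l(Tu,Tv)w$ and $\ne_V(u,v,w)=r(Tv,Tw)u$. Since $T$ is a linear isomorphism satisfying $T\phi=\alpha T$, I would transport this structure to $A$ by setting $\nw(x,y,z):=T\bigl(l(x,y)T^{-1}z\bigr)$ and $\ne(x,y,z):=T\bigl(r(y,z)T^{-1}x\bigr)$ for $x,y,z\in A$; with these definitions $T$ becomes, tautologically, an isomorphism of $3$-Hom-L-dendriform algebras, so $(A,\nw,\ne,\alpha)$ again satisfies \eqref{3-Hom-L-dendriform0}--\eqref{3-Hom-L-dendriform6}. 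It then remains only to check compatibility. By \eqref{accolade horizintal}, the horizontal bracket of $(A,\nw,\ne,\alpha)$ is
\[
\{x,y,z\}^h=\nw(x,y,z)+\ne(x,y,z)-\ne(y,x,z)=T\bigl(l(x,y)T^{-1}z-r(x,z)T^{-1}y+r(y,z)T^{-1}x\bigr),
\]
and feeding $u=T^{-1}x$, $v=T^{-1}y$, $w=T^{-1}z$ into the defining identity \eqref{O-op 3-pre-Lie} of an $\mathcal{O}$-operator yields exactly $\{x,y,z\}^h=\{x,y,z\}$. Hence $(A,\nw,\ne,\alpha)$ is a compatible $3$-Hom-L-dendriform algebra on $(A,\{\c,\c,\c\},\alpha)$.

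For the necessity, suppose $(A,\nw,\ne,\alpha)$ is a compatible $3$-Hom-L-dendriform algebra on $(A,\{\c,\c,\c\},\alpha)$. By the Proposition preceding Theorem~\ref{3HomLden by O-op}, the maps $L_\nw(x,y)z=\nw(x,y,z)$ and $R_\ne(x,y)z=\ne(z,x,y)$ make $(A,L_\nw,R_\ne,\alpha)$ a representation of the associated $3$-Hom-pre-Lie algebra, which by compatibility is $(A,\{\c,\c,\c\},\alpha)$ itself. I would then check that $\mathrm{id}_A:A\to A$ is an $\mathcal{O}$-operator associated to $(A,L_\nw,R_\ne,\alpha)$: the relation $\mathrm{id}_A\circ\alpha=\alpha\circ\mathrm{id}_A$ is immediate, and for all $x,y,z\in A$,
\[
L_\nw(x,y)z-R_\ne(x,z)y+R_\ne(y,z)x=\nw(x,y,z)+\ne(x,y,z)-\ne(y,x,z)=\{x,y,z\}^h=\{x,y,z\},
\]
which is exactly the second identity of \eqref{O-op 3-pre-Lie} for $T=\mathrm{id}_A$. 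Since $\mathrm{id}_A$ is invertible, this is the required invertible $\mathcal{O}$-operator on $A$.

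The only genuinely non-routine point is the assertion, used in the sufficiency direction, that transport along $T$ preserves all seven axioms \eqref{3-Hom-L-dendriform0}--\eqref{3-Hom-L-dendriform6}. Carried out by hand this means tracking the auxiliary operations $\{\c,\c,\c\}^h$, $\{\c,\c,\c\}^v$ and $[\c,\c,\c]^C$ through the isomorphism and invoking $T\phi=\alpha T$ repeatedly; it is lengthy but entirely mechanical, and is cleanest phrased abstractly, noting that a vector-space bijection intertwining $\phi$ and $\alpha$ automatically carries one $3$-Hom-L-dendriform algebra to an isomorphic one. Everything else is a one-line application of \eqref{O-op 3-pre-Lie} or of the definitions.
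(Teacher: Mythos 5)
Your proposal is correct and follows essentially the same route as the paper: for sufficiency, both use Theorem~\ref{3HomLden by O-op} to get the induced structure $\nw(x,y,z)=T(l(x,y)T^{-1}z)$, $\ne(x,y,z)=T(r(y,z)T^{-1}x)$ on $T(V)=A$ (the paper cites the induced structure on $T(V)$ directly, which already covers your "transport" step) and then verify compatibility through the horizontal bracket and \eqref{O-op 3-pre-Lie}; for necessity, both exhibit $\mathrm{id}_A$ as an invertible $\mathcal{O}$-operator associated to the representation $(A,L_{\nw},R_{\ne},\alpha)$ of the sub-adjacent $3$-Hom-pre-Lie algebra.
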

 \begin{proof}
 Let $T$ be an invertible $\mathcal{O}$-operator of $A$ associated to a representation $(V, l,r,\phi)$.
Then there exists a $3$-Hom-L-dendriform algebra structure on $V$ defined by
 \begin{align}
  \nw(u,v,w)=l(Tu,Tv)w  , \quad \ne(u, v,w)= r(Tv,Tw)u, \forall u,v,w \in V.
 \end{align}
In addition there exists a $3$-Hom-L-dendriform algebra structure on $T(V)=A$ given by
 \begin{align}
  \nw(Tu,Tv,Tw)=T(l(Tu,Tv)w)  , \quad \ne(Tu, Tv,Tw)= T(r(Tv,Tw)u), \forall u,v,w \in V.
 \end{align}
 If we put $x=Tu,\ y=Tv$ and $z=Tw$, we get
  \begin{align}
  \nw(x,y,z)=T(l(x,y)T^{-1}(z))  , \quad \ne(x, y,z)= T(r(y,z)T^{-1}(x)), \forall x,y,z \in A.
 \end{align}
It is a compatible $3$-Hom-L-dendriform algebra structure on $A$. Indeed,
\begin{align*}
&  \nw(x,y,z)+\ne(x,y,z)-\ne(y,x,z) \\
=& T(l(x,y)T^{-1}(z)) + T(r(y,z)T^{-1}(x))-T(r(x,z)T^{-1}(y)) \\
=& \{TT^{-1}(x),TT^{-1}(y),TT^{-1}(z)\}=\{x,y,z\}.
\end{align*}
Conversely,   let $(A,\{\c,\c,\c\},\alpha)$ be a $3$-Hom-pre-Lie algebra and $(A,\nw,\ne,\alpha)$ its compatible $3$-Hom-L-dendriform algebra.  Then the identity map $id: A \to A$ is an $\mathcal{O}$-operator of  $(A,\{\c,\c,\c\},\alpha)$ associated to $(A,L,R,\alpha)$.
 \end{proof}

\begin{defi}
Let $(A,\{\c,\c,\c\},\a)$ be a $3$-Hom-pre-Lie algebra and $B$ be a skew-symmetric bilinear form on $A$.  We say that $B$ is closed if it satisfies for any $x,y,z,w \in A$
\begin{align}\label{symplectic bilinear form}
B(\{\alpha(x),\alpha(y),\alpha(z)\},\alpha^2(w))-B(\alpha(z),[x,y,w]^C)-B(\alpha(y),\{w,x,z\})+B(\alpha(x),\{w,y,z\})=0.
\end{align}

  If in addition $B$ is nondegenerate, then $B$ is called symplectic.

A $3$-Hom-pre-Lie algebra $(A,\{\c,\c,\c\},\a)$ equipped with a symplectic form is called a symplectic $3$-Hom-pre-Lie algebra and denoted by  $(A,\{\c,\c,\c\},\a,B)$.
\end{defi}

\begin{pro}
Let $(A,\{\c,\c,\c\},\a,B)$ be a symplectic $3$-Hom-pre-Lie algebra. Then there exists a compatible $3$-Hom-L-dendriform algebra structure on $A$ given by
\begin{align}\label{3Ldend by  form}
B(\nw(\a(x),\a(y),z),\a^2(w))=B(z,[x,y,w]^C),\quad B(\ne(x,\a(y),\a(z)),\a^2(w))=-B(x,\{w,y,z\})
\end{align}for all $x,y,z,w \in A$.
\end{pro}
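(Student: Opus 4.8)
The plan is to obtain the claimed operations as the $3$-Hom-L-dendriform structure induced, through Theorem~\ref{3HomLden by O-op} and Corollary~\ref{3HomLden by invert O-op1}, by an invertible $\mathcal{O}$-operator built from $B$. Since $B$ is nondegenerate, the linear map $A\to A^{*}$, $x\mapsto B(x,\c)$, is an isomorphism; let $T:A^{*}\to A$ be its inverse, so that $B(T\xi,a)=\langle\xi,a\rangle$ for all $\xi\in A^{*}$ and $a\in A$. The point to prove is that $T$ is an $\mathcal{O}$-operator of $(A,\{\c,\c,\c\},\a)$ associated to the dual of the adjoint representation of the $3$-Hom-pre-Lie algebra, i.e.\ to $(A^{*},ad^{\star},-R^{\star},(\a^{-1})^{*})$, where $ad^{\star}$ is the coadjoint representation \eqref{coadjoint rep} and $R^{\star}$ is the dual of the right multiplication $R$ in the sense of \eqref{l,r star 2}.

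First I would verify the twist condition $T\circ(\a^{-1})^{*}=\a\circ T$; expanding $B(T\xi,a)=\langle\xi,a\rangle$ this reduces to $B(\a x,\a y)=B(x,y)$, the $\a$-invariance of $B$, which is the one place a compatibility between $B$ and $\a$ is needed (for a Hom-symplectic form this is either part of the definition or a consequence of multiplicativity). The core step is then the $\mathcal{O}$-operator identity \eqref{O-op 3-pre-Lie} for $T$. Taking $\xi,\eta,\zeta\in A^{*}$ and pairing
\[
\{T\xi,T\eta,T\zeta\}=T\bigl(ad^{\star}(T\xi,T\eta)\zeta+R^{\star}(T\xi,T\zeta)\eta-R^{\star}(T\eta,T\zeta)\xi\bigr)
\]
with an arbitrary element $\a^{2}(w)$ (allowed since $\a$ is invertible), one uses the defining formulas \eqref{coadjoint rep} and \eqref{l,r star 2} to turn every dual pairing into a value of $B$; after the substitution $T\xi=\a(x)$, $T\eta=\a(y)$, $T\zeta=\a(z)$ and a rearrangement of the powers of $\a$ (using multiplicativity and the $\a$-invariance of $B$), the identity becomes exactly the closedness relation \eqref{symplectic bilinear form}. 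Since $B$ is nondegenerate this equivalence is reversible, so $T$ is an invertible $\mathcal{O}$-operator.

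By Theorem~\ref{3HomLden by O-op}, applied with $V=A^{*}$, there is then a $3$-Hom-L-dendriform structure on $T(A^{*})=A$, namely
\[
\nw(x,y,z)=T\bigl(ad^{\star}(x,y)(T^{-1}z)\bigr),\qquad \ne(x,y,z)=-T\bigl(R^{\star}(y,z)(T^{-1}x)\bigr).
\]
Applying $T^{-1}$ to each and pairing with $\a^{2}(w)$, and again unwinding \eqref{coadjoint rep}, \eqref{l,r star 2} and $B(T\xi,a)=\langle\xi,a\rangle$, one recovers precisely the two displayed identities of the statement. Compatibility --- that the associated horizontal $3$-Hom-pre-Lie bracket \eqref{accolade horizintal} of $(A,\nw,\ne,\a)$ coincides with the given $\{\c,\c,\c\}$ --- is then automatic: for any $\mathcal{O}$-operator $T$, \eqref{O-op 3-pre-Lie} gives
\[
\nw(x,y,z)+\ne(x,y,z)-\ne(y,x,z)=T\bigl(ad^{\star}(x,y)T^{-1}z+R^{\star}(x,z)T^{-1}y-R^{\star}(y,z)T^{-1}x\bigr)=\{x,y,z\},
\]
exactly as in the proof of Corollary~\ref{3HomLden by invert O-op1}.

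I expect the genuine obstacle to be the bookkeeping in the two unwinding passages above: matching signs and the exact powers of $\a$ between the abstract $\mathcal{O}$-operator identity (respectively the abstract formulas for $\nw$ and $\ne$) and the concrete relations \eqref{symplectic bilinear form} (respectively the two displayed identities of the statement); the remainder is formal. Alternatively, bypassing $A^{*}$ and $T$ entirely, one may simply take the two identities of the statement as the definition of $\nw$ and $\ne$ (well defined because $B$ is nondegenerate) and verify \eqref{3-Hom-L-dendriform0}--\eqref{3-Hom-L-dendriform6} and compatibility directly; each of these, after being paired with a suitable $\a^{2}(w)$ and stripped of $B$ by nondegeneracy, reduces to a consequence of \eqref{symplectic bilinear form} and the $3$-Hom-pre-Lie axioms \eqref{3-pre-Lie 0}--\eqref{3-hom-pre-Lie 2}. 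This route is longer but uses no new idea.
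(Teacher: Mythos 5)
Your proposal follows essentially the same route as the paper's own proof: you build the invertible map $T:A^{*}\to A$ from the nondegenerate form $B$, show via the closedness condition \eqref{symplectic bilinear form} (together with the compatibility of $B$ with $\a$) that $T$ is an invertible $\mathcal{O}$-operator associated to $(A^{*},ad^{\star},-R^{\star},(\a^{-1})^{*})$, invoke Corollary \ref{3HomLden by invert O-op1} to obtain $\nw(x,y,z)=T(ad^{\star}(x,y)T^{-1}(z))$ and $\ne(x,y,z)=-T(R^{\star}(y,z)T^{-1}(x))$, and then unwind the dual pairings to recover the two displayed $B$-identities, exactly as the paper does. The only point worth noting is the one you already flag yourself: the $\a$-compatibility of $B$ (which the paper invokes as ``$B$ is $\a$-symmetric'' without stating it in the definition) is indeed needed for the twist condition, so your treatment is, if anything, slightly more careful on that bookkeeping.
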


\begin{proof}
Define the linear map $T: A^* \to A$ by $\langle T^{-1}x,y\rangle=B(x,y)$.  Since $B$
is $\a$-symmetric
and using Eq. \eqref{symplectic bilinear form}, we obtain that $T$ is an invertible $\mathcal{O}$-operator  on $A$ associated to the coadjoint representation $(A^*, ad^\star,-R^\star,(\a^{-1})^*)$.   By Corollary  \ref{3HomLden by invert O-op1}, there exists a compatible $3$-Hom-L-dendriform algebra structure given by
  \begin{align}
  \nw(x,y,z)=T(ad^\star(x,y)T^{-1}(z))  , \quad \ne(x, y,z)= -T(R^\star(y,z)T^{-1}(x)),\forall  x,y,z \in A.
 \end{align}
Let  $x,y,z,w \in A$, then we have
\begin{align*}
    B(\nw(\a(x),\a(y),z),\a^2(w))= & B(T(ad^\star(\a(x),\a(y))T^{-1}(z)) ,\a^2(w))=\langle ad^\star(\a(x),\a(y))T^{-1}(z),\a^2(w)\rangle \\
    =& \langle T^{-1}(z),[x,x,w]^C\rangle=B(z,[x,y,w]^C)
\end{align*}
and
\begin{align*}
    B(\ne(x,\a(y),\a(z)),\a^2(w))= & -B(T(R^\star(\a(y),\a(z)))T^{-1}(x))) ,\a^2(w))=-\langle R^\star(\a(y),\a(z))T^{-1}(x),\a^2(w)\rangle \\
    = &-\langle T^{-1}(x),\{w,y,z\}\rangle=-B(x,\{w,y,z\}).
\end{align*}
The proof is finished.
\end{proof}

\begin{cor}
Let $(A,\{\c,\c,\c\},\a,B)$ be a symplectic $3$-Hom-pre-Lie algebra and $(A,[\c,\c,\c]^C,\a)$ be its associated $3$-Hom-Lie algebra.  Then there exists a $3$-Hom-pre-Lie algebraic structure $(A,\{\c,\c,\c\}',\a)$ on $A$ given by
\begin{align}
B(\{\a(x),\a(y),\a(z)\}',\a^2(w))=B(\a(z),[x,y,w]^C)-B(\a(z),\{w,x,y\})+B(\a(z),\{w,y,x\}).
\end{align}
\end{cor}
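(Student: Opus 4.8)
The plan is to identify the bracket $\{\c,\c,\c\}'$ in the statement with the \emph{vertical} $3$-Hom-pre-Lie bracket attached to the compatible $3$-Hom-L-dendriform structure that the symplectic form already produces; then the corollary follows from Proposition~\ref{3LDendTo3PreLie} together with a short pairing computation.

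First I would invoke the preceding Proposition: since $(A,\{\c,\c,\c\},\a,B)$ is symplectic, $A$ carries a compatible $3$-Hom-L-dendriform structure $(A,\nw,\ne,\a)$ determined by
\[
B(\nw(\a(x),\a(y),z),\a^2(w))=B(z,[x,y,w]^C),\qquad B(\ne(x,\a(y),\a(z)),\a^2(w))=-B(x,\{w,y,z\}),
\]
for all $x,y,z,w\in A$. By Proposition~\ref{3LDendTo3PreLie}, the vertical bracket $\{x,y,z\}^v=\nw(x,y,z)+\ne(z,x,y)-\ne(z,y,x)$ then equips $A$ with a (multiplicative) $3$-Hom-pre-Lie algebra structure, whose associated $3$-Hom-Lie bracket is again $[\c,\c,\c]^C$. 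I would set $\{\c,\c,\c\}':=\{\c,\c,\c\}^v$ and then check that it satisfies the displayed identity.

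For the check: writing $\{\a(x),\a(y),\a(z)\}^v=\nw(\a(x),\a(y),\a(z))+\ne(\a(z),\a(x),\a(y))-\ne(\a(z),\a(y),\a(x))$, pairing with $\a^2(w)$, and evaluating the three terms by the two formulas above (the first with $z\mapsto\a(z)$; the second and third via the $\ne$-formula applied to the argument slots $(\a(z),x,y)$ and $(\a(z),y,x)$ respectively), I would obtain
\[
B(\{\a(x),\a(y),\a(z)\}^v,\a^2(w))=B(\a(z),[x,y,w]^C)-B(\a(z),\{w,x,y\})+B(\a(z),\{w,y,x\}),
\]
which is exactly the formula in the statement. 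Since $B$ is nondegenerate and $\a$ (hence $\a^2$) is invertible — the standing assumption already used for the coadjoint representation and for the operator $T\colon A^*\to A$ in the preceding proof — this identity determines $\{u,v,t\}'$ for all $u,v,t\in A$, so the displayed formula is consistent and indeed defines the $3$-Hom-pre-Lie structure $\{\c,\c,\c\}^v$.

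I do not expect a real obstacle here, since the content is packaged by the earlier results; the only care needed is bookkeeping — applying the $\a$-shifted defining relations of $\nw$ and $\ne$ to the correct argument slots and tracking the sign on the third term. One could alternatively bypass the dendriform detour by defining $T\colon A^*\to A$ through $\langle T^{-1}x,y\rangle=B(x,y)$, noting via \eqref{symplectic bilinear form} that $T$ is an invertible $\mathcal O$-operator associated to $(A^*,ad^\star,-R^\star,(\a^{-1})^*)$, transporting along $T$ the relevant $3$-Hom-pre-Lie structure on $A^*$, and reading off the formula; but the route through Proposition~\ref{3LDendTo3PreLie} is the most economical.
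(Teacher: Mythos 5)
Your proposal is correct and follows exactly the route the paper intends: the corollary is stated as an immediate consequence of the preceding proposition, with $\{\c,\c,\c\}'$ being the vertical bracket $\{\c,\c,\c\}^v=\nw(\c,\c,\c)+\ne$-terms of the compatible $3$-Hom-L-dendriform structure, which is a $3$-Hom-pre-Lie product by Proposition~\ref{3LDendTo3PreLie}. Your slot-by-slot pairing of $\nw(\a(x),\a(y),\a(z))$, $\ne(\a(z),\a(x),\a(y))$ and $-\ne(\a(z),\a(y),\a(x))$ against $\a^2(w)$ reproduces the displayed formula with the right signs, and the nondegeneracy of $B$ together with the invertibility of $\a$ ensures the formula determines $\{\c,\c,\c\}'$, so nothing is missing.
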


\begin{lem}\label{commuting rota-baxter op}
Let $\{\mathcal R_1,\mathcal R_2\}$ be a pair of of commuting Rota-Baxter operators of weight zero on a $3$-Hom-Lie algebra $(A, [\c,\c,\c],\a)$. Then $\mathcal R_2$ is a Rota-Baxter operator of weight zero on the associated $3$-Hom-pre-Lie algebra
defined by  $\{x,y,z\}=[\mathcal R_1(x),\mathcal R_1(y),z]$.
\end{lem}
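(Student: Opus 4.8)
The plan is to verify directly that $\mathcal{R}_2$ satisfies the Rota--Baxter identity for the $3$-Hom-pre-Lie bracket $\{x,y,z\}=[\mathcal{R}_1(x),\mathcal{R}_1(y),z]$, using only the three hypotheses on $\mathcal{R}_2$: it commutes with $\mathcal{R}_1$, it commutes with $\alpha$, and it is a Rota--Baxter operator of weight zero on $(A,[\cdot,\cdot,\cdot],\alpha)$. First I would record the two immediate consequences of commutativity: $\mathcal{R}_2\alpha=\alpha\mathcal{R}_2$ gives the twist-compatibility $\mathcal{R}_2\alpha=\alpha\mathcal{R}_2$ needed for an $\mathcal{O}$-operator, and $\mathcal{R}_1\mathcal{R}_2=\mathcal{R}_2\mathcal{R}_1$ will be used repeatedly to slide $\mathcal{R}_2$ past $\mathcal{R}_1$ inside brackets. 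The goal identity to establish is, for all $x,y,z\in A$,
\begin{align*}
\{\mathcal{R}_2(x),\mathcal{R}_2(y),\mathcal{R}_2(z)\}=\mathcal{R}_2\big(\{\mathcal{R}_2(x),\mathcal{R}_2(y),z\}+\{\mathcal{R}_2(x),y,\mathcal{R}_2(z)\}+\{x,\mathcal{R}_2(y),\mathcal{R}_2(z)\}\big),
\end{align*}
which, after unwinding the definition of $\{\cdot,\cdot,\cdot\}$, becomes a statement purely about the $3$-Hom-Lie bracket and the two operators.

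The key computational step is to rewrite the left-hand side $\{\mathcal{R}_2 x,\mathcal{R}_2 y,\mathcal{R}_2 z\}=[\mathcal{R}_1\mathcal{R}_2 x,\mathcal{R}_1\mathcal{R}_2 y,\mathcal{R}_2 z]=[\mathcal{R}_2\mathcal{R}_1 x,\mathcal{R}_2\mathcal{R}_1 y,\mathcal{R}_2 z]$ using commutativity, and then apply the Rota--Baxter property of $\mathcal{R}_2$ on $(A,[\cdot,\cdot,\cdot],\alpha)$ with the three arguments $\mathcal{R}_1 x$, $\mathcal{R}_1 y$, $z$. This yields
\begin{align*}
[\mathcal{R}_2\mathcal{R}_1 x,\mathcal{R}_2\mathcal{R}_1 y,\mathcal{R}_2 z]=\mathcal{R}_2\big([\mathcal{R}_2\mathcal{R}_1 x,\mathcal{R}_2\mathcal{R}_1 y,z]+[\mathcal{R}_2\mathcal{R}_1 x,\mathcal{R}_1 y,\mathcal{R}_2 z]+[\mathcal{R}_1 x,\mathcal{R}_2\mathcal{R}_1 y,\mathcal{R}_2 z]\big).
\end{align*}
Then I would push the inner $\mathcal{R}_2$'s back through $\mathcal{R}_1$ (again by commutativity) to recognise each of the three bracket terms inside $\mathcal{R}_2(\cdots)$ as exactly $\{\mathcal{R}_2 x,\mathcal{R}_2 y,z\}$, $\{\mathcal{R}_2 x,y,\mathcal{R}_2 z\}$, and $\{x,\mathcal{R}_2 y,\mathcal{R}_2 z\}$ respectively: for instance $[\mathcal{R}_2\mathcal{R}_1 x,\mathcal{R}_1 y,\mathcal{R}_2 z]=[\mathcal{R}_1\mathcal{R}_2 x,\mathcal{R}_1 y,\mathcal{R}_2 z]=\{\mathcal{R}_2 x,y,\mathcal{R}_2 z\}$, and similarly for the others. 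This matches the right-hand side term by term, completing the proof. I would also remark that $(A,\{\cdot,\cdot,\cdot\},\alpha)$ is genuinely a $3$-Hom-pre-Lie algebra — this follows since $\mathcal{R}_1$ is an $\mathcal{O}$-operator on $(A,[\cdot,\cdot,\cdot],\alpha)$ associated to the adjoint representation, so by Proposition~\ref{pro:3hompreLieT} the bracket $\{x,y,z\}=[\mathcal{R}_1 x,\mathcal{R}_1 y,z]=\mathrm{ad}(\mathcal{R}_1 x,\mathcal{R}_1 y)z$ defines a $3$-Hom-pre-Lie structure.

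I do not expect a genuine obstacle here: the entire argument is a bookkeeping exercise in moving $\mathcal{R}_2$ past $\mathcal{R}_1$ and past $\alpha$. The only point requiring mild care is making sure the twist compatibility $\mathcal{R}_2\phi=\alpha\mathcal{R}_2$ in the $\mathcal{O}$-operator/Rota--Baxter definition holds for the new $3$-Hom-pre-Lie structure, whose twist map is still $\alpha$; this is immediate from $\mathcal{R}_2\alpha=\alpha\mathcal{R}_2$, which is part of the commutativity hypothesis (each $\mathcal{R}_i$ being Rota--Baxter already forces $\mathcal{R}_i\alpha=\alpha\mathcal{R}_i$, and commuting Rota--Baxter operators are assumed to commute with each other). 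Thus the main step is simply the single application of the Rota--Baxter identity for $\mathcal{R}_2$ on the ambient $3$-Hom-Lie algebra, sandwiched between two uses of $\mathcal{R}_1\mathcal{R}_2=\mathcal{R}_2\mathcal{R}_1$.
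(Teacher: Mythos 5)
Your proposal is correct and follows essentially the same route as the paper: rewrite $\{\mathcal R_2 x,\mathcal R_2 y,\mathcal R_2 z\}$ as $[\mathcal R_2\mathcal R_1 x,\mathcal R_2\mathcal R_1 y,\mathcal R_2 z]$ via commutativity, apply the Rota--Baxter identity of $\mathcal R_2$ on the $3$-Hom-Lie bracket with arguments $\mathcal R_1 x,\mathcal R_1 y,z$, and slide $\mathcal R_2$ back past $\mathcal R_1$ to identify the three terms with the pre-Lie brackets. Your added remarks on the twist compatibility $\mathcal R_2\alpha=\alpha\mathcal R_2$ and on why $\{x,y,z\}=[\mathcal R_1 x,\mathcal R_1 y,z]$ is indeed a $3$-Hom-pre-Lie structure are consistent with the paper's setup and do not change the argument.
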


\begin{proof}
For any $x,y,z \in A$, we have
\begin{align*}
&\{\mathcal R_2(x),\mathcal R_2(y),\mathcal R_2(z)\}=[\mathcal R_1\mathcal R_2(x),\mathcal R_1\mathcal R_2(y),\mathcal R_2(z)] =[\mathcal R_2\mathcal R_1(x),\mathcal R_2\mathcal R_1(y),\mathcal R_2(z)] \\
&=\mathcal R_2([\mathcal R_2\mathcal R_1(x),\mathcal R_2\mathcal R_1(y),z]+[\mathcal R_1(x),\mathcal R_2\mathcal R_1(y),\mathcal R_2(z)]  +
[\mathcal R_2\mathcal R_1(x),\mathcal R_1(y),\mathcal R_2(z)] ) \\
&=\mathcal R_2(\{\mathcal R_2(x),\mathcal R_2(y),z\}+\{x,\mathcal R_2(y),\mathcal R_2(z)\}+\{\mathcal R_2(x),y,\mathcal R_2(z)\}).
\end{align*}
Hence $\mathcal R_2$ is a Rota-Baxter operator of weight zero on the $3$-Hom-pre-Lie algebra
$(A,\{\c,\c,\c\},\a)$.
\end{proof}

\begin{pro}
Let $\{\mathcal R_1,\mathcal R_2\}$ be a pair of commuting Rota-Baxter operators of weight zero on a $3$-Hom-Lie algebra $(A, [\c,\c,\c],\a)$. Then $(A,\nw,\ne,\a)$  is a $3$-Hom-L-dendriform algebra, where the two brackets are  defined by:
\begin{align*}
\nw(x,y,z)=[\mathcal R_1\mathcal R_2(x),\mathcal R_1\mathcal R_2(y),z],\quad \ne(x,y,z)=[\mathcal R_1(x),\mathcal R_1\mathcal R_2(y),\mathcal R_2(z)], \forall x,y,z \in A.
\end{align*}
\end{pro}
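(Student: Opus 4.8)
The plan is to reduce the statement to the two $\mathcal{O}$-operator results already established, namely Lemma~\ref{commuting rota-baxter op} together with Theorem~\ref{3HomLden by O-op} (or its Rota-Baxter specialization Corollary~\ref{3HomLden by invert O-op}). The key observation is that the pair of brackets $\nw$ and $\ne$ in the statement is exactly what one gets by applying Corollary~\ref{3HomLden by invert O-op} to the $3$-Hom-pre-Lie algebra produced from $\mathcal{R}_1$, using $\mathcal{R}_2$ as the Rota-Baxter operator on it.

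First I would set $\{x,y,z\} := [\mathcal{R}_1(x),\mathcal{R}_1(y),z]$, which by Theorem~\ref{3HomLden by O-op} (applied to the $\mathcal{O}$-operator $\mathcal{R}_1$ on the adjoint representation) is a $3$-Hom-pre-Lie algebra structure on $A$; here one should check $\{\cdot,\cdot,\cdot\}$ is compatible with $\a$, which follows from $\mathcal{R}_1\a=\a\mathcal{R}_1$ and multiplicativity of the bracket. Next, by Lemma~\ref{commuting rota-baxter op}, $\mathcal{R}_2$ is a Rota-Baxter operator of weight zero on $(A,\{\cdot,\cdot,\cdot\},\a)$. Then I would invoke Corollary~\ref{3HomLden by invert O-op}: a Rota-Baxter operator on a $3$-Hom-pre-Lie algebra yields a $3$-Hom-L-dendriform structure via $\nw(x,y,z)=\{\mathcal{R}_2 x,\mathcal{R}_2 y,z\}$ and $\ne(x,y,z)=\{x,\mathcal{R}_2 y,\mathcal{R}_2 z\}$.

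The final step is to unwind these definitions and confirm they coincide with the claimed formulas. For the left operation: $\nw(x,y,z)=\{\mathcal{R}_2 x,\mathcal{R}_2 y,z\}=[\mathcal{R}_1\mathcal{R}_2 x,\mathcal{R}_1\mathcal{R}_2 y,z]$, as claimed. For the right operation: $\ne(x,y,z)=\{x,\mathcal{R}_2 y,\mathcal{R}_2 z\}=[\mathcal{R}_1 x,\mathcal{R}_1\mathcal{R}_2 y,\mathcal{R}_2 z]$, again matching the statement. So no genuine computation is required beyond citing the prior results and substituting.

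The only mild subtlety—and the place I would be most careful—is making sure the hypotheses of Corollary~\ref{3HomLden by invert O-op}/Theorem~\ref{3HomLden by O-op} are literally met: one needs $(A,\{\cdot,\cdot,\cdot\},\a)$ to be a bona fide multiplicative $3$-Hom-pre-Lie algebra (so the commutation $\mathcal{R}_1\a=\a\mathcal{R}_1$ is used), and one needs $\mathcal{R}_2$ to commute with $\a$ on it, which is immediate since $\a$ is unchanged. There is no circularity or deep obstacle here; the whole proposition is a two-line corollary of the machinery built in Sections~2 and~3, and the proof should simply read: ``By Lemma~\ref{commuting rota-baxter op}, $\mathcal{R}_2$ is a Rota-Baxter operator on $(A,\{x,y,z\}=[\mathcal{R}_1 x,\mathcal{R}_1 y,z],\a)$; applying Corollary~\ref{3HomLden by invert O-op} and substituting gives the two brackets above.''
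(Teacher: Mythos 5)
Your proposal is correct and follows exactly the paper's own route: the authors also deduce the proposition directly from Lemma~\ref{commuting rota-baxter op} and Corollary~\ref{3HomLden by invert O-op}, with the same substitution $\nw(x,y,z)=\{\mathcal R_2x,\mathcal R_2y,z\}=[\mathcal R_1\mathcal R_2x,\mathcal R_1\mathcal R_2y,z]$ and $\ne(x,y,z)=\{x,\mathcal R_2y,\mathcal R_2z\}=[\mathcal R_1x,\mathcal R_1\mathcal R_2y,\mathcal R_2z]$. Your extra care about $\a$-compatibility is a harmless refinement of what the paper leaves implicit.
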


\begin{proof}
It follows immediately from Lemma \ref{commuting rota-baxter op} and Corollary \ref{3HomLden by invert O-op}.
\end{proof}

\begin{rem}
Let $(A,[\c,\c],\a)$ be a Hom-Lie algebra. Recall that a trace function $\tau: A \to \mathbb{K}$  is a linear map such that $\tau([x,y])=0$ and $\tau\a=\tau$, $ \forall  x,y \in A$. When $\tau$ is a trace function, it is well known \cite{Arnlind&Makhlouf&Silvestrov} that $(A,[\c,\c,\c]_{\tau},\a)$ is a $3$-Hom-Lie algebra, where
$$[x,y,z]_{\tau}:=\circlearrowleft_{x,y,z \in A}\tau(x)[y,z],\  \forall x,y,z \in A.$$
Now, let $\{\mathcal R_1,\mathcal R_2\}$ be a pair of commuting Rota-Baxter operators of weight zero on the $3$-Hom-Lie algebra $(A, [\c,\c,\c]_{\tau},\a)$. Then we can construct a $3$-Hom-L-dendriform structure on $A$, given by
\begin{align*}
 \nw(x,y,z)=& \tau (\mathcal R_1\mathcal R_2(x))[\mathcal R_1\mathcal R_2(y),z]+ \tau (\mathcal R_1\mathcal R_2(y))[z,\mathcal R_1\mathcal R_2(x)]+
      +\tau(z)[\mathcal R_1\mathcal R_2(x),\mathcal R_1\mathcal R_2(y)], \\
\ne(x,y,z)=& \tau (\mathcal R_1(x))[\mathcal R_1\mathcal R_2(y),\mathcal R_2(z)]+\tau(\mathcal R_1\mathcal R_2(y))[\mathcal R_2(z),\mathcal R_1(x)]
  + \tau(\mathcal R_2(z)) [\mathcal R_1(x),\mathcal R_1\mathcal R_2(y)],
\end{align*}
for any $x,y,z \in A$.
\end{rem}
\section{Nijenhuis Operators on $3$-Hom-L-dendriform algebras}

In this section, we study the second order deformation of $3$-Hom-L-dendriform algebras  and introduce the notion of Nijenhuis operator on $3$-Hom-L-dendriform algebras, which could generate a trivial deformation. Moreover,  we give some properties and results of Nijenhuis operators.

\subsection{Second-order deformation of $3$-Hom-L-dendriform algebras}
Let $(A,\nw,\ne,\alpha)$ be a $3$-Hom-L-dendriform algebra and $\omega_\nw^i,\omega_\ne^i:A\times A \times A\longrightarrow A,\;i=1,2$ be  a multilinear maps. Consider a $\lambda$-parameterized
family of $3$-linear operations:
\begin{equation}\label{omegaparam}
\nw_\lambda(x_1,x_2,x_3)=\nw(x_1,x_2,x_3)+\lambda\omega_\nw^1(x_1,x_2,x_3)+\lambda^2\omega_\nw^2(x_1,x_2,x_3)+\dots,
\end{equation}\begin{equation}\label{omegaparam1}
\ne_\lambda(x_1,x_2,x_3)=\ne(x_1,x_2,x_3)+\lambda\omega_\ne^1(x_1,x_2,x_3)+\lambda^2\omega_\ne^2(x_1,x_2,x_3)+\dots,
\end{equation}
where $\lambda\in \mathbb{K}$. If all $(\nw_\lambda,\ne_\lambda,\alpha)$ are $3$-Hom-L-dendriform algebra structures, we say
that $\omega_\nw^i$, $\omega_\nw^i$ generates  a  $\lambda$-parameter deformation of the $3$-Hom-L-dendriform algebra $(A,\nw,\ne,\alpha)$.
\begin{rem}
\begin{enumerate}
\item If $\lambda^2=0$, the deformation is called infinitesimal.
\item If $\lambda^n=0$,  the deformation is said to be of order $n-1$.
\end{enumerate}
\end{rem}
\begin{defi}
  A deformation is said to be trivial if there exists an linear map $N:A\longrightarrow A$ such that for all $\lambda,\;T_\lambda=id+\lambda N$ satisfies
\begin{align}
T_\lambda\circ\alpha&=\alpha\circ T_\lambda,\label{alphabetatrivialdeform}\\
T_\lambda\nw_\lambda(x_1,x_2,x_3)&=\nw(T_\lambda x_1,T_\lambda x_2,T_\lambda x_3),\label{trivialdeformation1}\\
T_\lambda\ne_\lambda(x_1,x_2,x_3)&=\ne(T_\lambda x_1,T_\lambda x_2,T_\lambda x_3),\forall x_1,x_2,x_3\in A. \label{trivialdeformation2}
\end{align}
\end{defi}
Eq. \eqref{alphabetatrivialdeform} is equals to $N\circ\alpha=\alpha\circ N.$
The left hand side of Eq. \eqref{trivialdeformation1} is equivalent to
\begin{eqnarray*}
&&\nw(x_1,x_2,x_3)+\lambda(\omega_\nw^1(x_1,x_2,x_3)+N\nw(x_1,x_2,x_3))\\&&+
\lambda^2(\omega_\nw^2(x_1,x_2,x_3)+N\omega_\nw^1(x_1,x_2,x_3))+\lambda^3N\omega_\nw^2(x_1,x_2,x_3)+\dots.
\end{eqnarray*}
The right hand side of Eq. \eqref{trivialdeformation1} can be written as
\begin{eqnarray*}
&&\nw(x_1,x_2,x_3)+
\lambda(\nw(Nx_1,x_2,x_3+\nw(x_1,Nx_2,x_3)
+\nw(x_1,x_2,Nx_3))\\&&
+\lambda^2(\nw(Nx_1,Nx_2,x_3)+\nw(Nx_1,x_2,Nx_3)+\nw(x_1,Nx_2,Nx_3))+
\lambda^3\nw(Nx_1,Nx_2,Nx_3).
\end{eqnarray*}
Therefore, by Eq. \eqref{trivialdeformation1}, we have
{\small\begin{align*}
&\omega_\nw^1(x_1,x_2,x_3)+N\nw(x_1,x_2,x_3)=\nw(Nx_1,x_2,x_3)+
\nw(x_1,Nx_2,x_3)+\nw(x_1,x_2,Nx_3),\\
&\omega_\nw^2(x_1,x_2,x_3)+N\omega_\nw^1(x_1,x_2,x_3)
=\nw(Nx_1,Nx_2,x_3)+\nw(Nx_1,x_2,Nx_3)+\nw(x_1,Nx_2,Nx_3),\\
&N\omega_\nw^2(x_1,x_2,x_3)=\nw(Nx_1,Nx_2,Nx_3).
\end{align*}}
Similarly, using \eqref{trivialdeformation2} we obtain
{\small\begin{align*}
&\omega_\ne^1(x_1,x_2,x_3)+N\ne(x_1,x_2,x_3)=\ne(Nx_1,x_2,x_3)+
\ne(x_1,Nx_2,x_3)+\ne(x_1,x_2,Nx_3),\\
&\omega_\ne^2(x_1,x_2,x_3)+N\omega_\ne^1(x_1,x_2,x_3)
=\ne(Nx_1,Nx_2,x_3)+\ne(Nx_1,x_2,Nx_3)+\ne(x_1,Nx_2,Nx_3),\\
&N\omega_\ne^2(x_1,x_2,x_3)=\ne(Nx_1,Nx_2,Nx_3).
\end{align*}}

Let $(A,\nw,\ne,\alpha)$ be a $3$-Hom-L-dendriform algebra, and $N:A\longrightarrow A$ a linear map. Define a ternary operations $\nw^1_N,\ne^1_N:\otimes^3 A \longrightarrow A $ by
{\small\begin{equation}\label{Nw1N}
\nw^1_N(x_1,x_2,x_3)=\nw(Nx_1,x_2,x_3)+\nw(x_1,Nx_2,x_3)+
\nw(x_1,x_2,Nx_3)-
N\nw(x_1,x_2,x_3),
\end{equation}\begin{equation}\label{Ne1N}
\ne^1_N(x_1,x_2,x_3)=\ne(Nx_1,x_2,x_3)+\ne(x_1,Nx_2,x_3)+
\ne(x_1,x_2,Nx_3)-
N\ne(x_1,x_2,x_3).
\end{equation}}
Then we define ternary operations  $\nw^2_N,\ne^2_N:\otimes^3 A \longrightarrow A $, via induction by
{\small\begin{equation}
\nw^2_N(x_1,x_2,x_3)=\nw(Nx_1,Nx_2,x_3)+\nw(Nx_1,x_2,Nx_3)+
\nw(x_1,Nx_2,Nx_3)-N\nw^1_N(x_1,x_2,x_3).
\end{equation}\begin{equation}
\ne^2_N(x_1,x_2,x_3)=\ne(Nx_1,Nx_2,x_3)+\ne(Nx_1,x_2,Nx_3)+
\ne(x_1,Nx_2,Nx_3)-N\ne^1_N(x_1,x_2,x_3).
\end{equation}}
\begin{defi}
Let $(A,\nw,\ne,\alpha)$ be a $3$-Hom-L-dendriform algebra. A linear map $N:A\rightarrow A$ is called a Nijenhuis operator
if
\begin{eqnarray}
\alpha\circ N&=& N\circ\alpha,\\
\label{equNijenhuis1}
\nw(N(x_1),N(x_2),N(x_3))&=&\displaystyle\sum_{ I\subseteq[3]^*}(-1)^{|I|-1}N^{|I|}\nw(\widetilde{N}(x_1),\widetilde{N}(x_2),\widetilde{N}(x_3)),
\\
\label{equNijenhuis2}
\ne(N(x_1),N(x_2),N(x_3))&=&\displaystyle\sum_{\emptyset\neq I\subseteq[3]^*}(-1)^{|I|-1}N^{|I|}\ne(\widetilde{N}(x_1),\widetilde{N}(x_2),\widetilde{N}(x_3)),
\end{eqnarray}
$\forall x_1,x_2,x_3\in A$, where
$\widetilde{N}(x_i)=\left\{
                       \begin{array}{ll}
                         x_i &\; i\;\in I \hbox{ ;} \\
                         N(x_i) & \; i\;\not\in I\hbox{.}
                       \end{array}
                     \right.$
\end{defi}
The Eqs. \eqref{equNijenhuis1} and \eqref{equNijenhuis2} can be written
\begin{eqnarray*}
\nw(Nx,Ny,Nz)&=&N(\nw(Nx,Ny,z)+\nw(Nx,y,Nz)+\nw(x,Ny,Nz)\nonumber\\
\label{equNijenhuis11}&&-N(\nw(Nx,y,z)+\nw(x,Ny,z)+\nw(x,y,Nz))+N^2\nw(x,y,z)),
\\
\ne(N(x),N(y),N(z))&=&N(\ne(Nx,Ny,z)+\ne(Nx,y,Nz)+\ne(x,Ny,Nz)\nonumber\\
\label{equNijenhuis12}&&-N(\ne(Nx,y,z)+\ne(x,Ny,z)+\ne(x,y,Nz))+N^2\ne(x,y,z)).
\end{eqnarray*}

\subsection{Some properties of Nijenhuis operators}
\begin{pro}
  Let $N$ be a Nijenhuis operator on a $3$-Hom-L-dendriform algebra $(A,\nw,\ne,\alpha)$. Then $(A,\nw^1_N,\ne^1_N,\alpha)$ is a $3$-Hom-L-dendriform algebra and $N$ is a morphism of $3$-Hom-L-dendriform algebra from  $(A,\nw^1_N,\ne^1_N,\alpha)$ to  $(A,\nw,\ne,\alpha)$, where the two operations $\nw^1_N,\ne^1_N$ are defined above by equations \eqref{Nw1N} and \eqref{Ne1N}.
\end{pro}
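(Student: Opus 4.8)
The statement has three independent pieces: that $(\nw^1_N,\ne^1_N)$ is still skew in the first two slots and multiplicative for $\alpha$; that it satisfies the six structure axioms \eqref{3-Hom-L-dendriform1}--\eqref{3-Hom-L-dendriform6}; and that $N$ intertwines $(\nw^1_N,\ne^1_N)$ with $(\nw,\ne)$. The first piece is immediate from the definitions: each of the four summands of \eqref{Nw1N} is skew in $x_1,x_2$ by \eqref{3-Hom-L-dendriform0}, so $\nw^1_N(x_1,x_2,x_3)+\nw^1_N(x_2,x_1,x_3)=0$; and applying $\alpha$ to \eqref{Nw1N} and \eqref{Ne1N} and moving it inside, using $\alpha N=N\alpha$ and the multiplicativity of $\alpha$ for $\nw$ and $\ne$, yields $\alpha\circ\nw^1_N=\nw^1_N\circ\alpha^{\otimes 3}$ and $\alpha\circ\ne^1_N=\ne^1_N\circ\alpha^{\otimes 3}$ (the same computation gives multiplicativity of $\nw^2_N,\ne^2_N$, which is needed below).

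The pivot of the whole argument is to rewrite the Nijenhuis axioms. Substituting the inductive definitions of $\nw^2_N$ and $\ne^2_N$ into \eqref{equNijenhuis1}--\eqref{equNijenhuis2} — equivalently, using the expanded forms displayed just after the definition — shows that being a Nijenhuis operator is exactly the pair of relations
\[
N\circ\nw^2_N=\nw\circ N^{\otimes 3},\qquad N\circ\ne^2_N=\ne\circ N^{\otimes 3}.
\]
These say that $N$ carries the level-$2$ operations to the original ones; together with \eqref{Nw1N}--\eqref{Ne1N}, $\alpha N=N\alpha$, and the definition of a morphism of $3$-Hom-L-dendriform algebras, this is what gives the morphism assertion. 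It is also the only fact about $N$, beyond the axioms of $(A,\nw,\ne,\alpha)$ themselves, that is used to make the verification of \eqref{3-Hom-L-dendriform1}--\eqref{3-Hom-L-dendriform6} go through.

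For the six structure axioms I would substitute \eqref{Nw1N}--\eqref{Ne1N} into each of \eqref{3-Hom-L-dendriform1}--\eqref{3-Hom-L-dendriform6}, expand by multilinearity (keeping track of the induced brackets $\{\cdot,\cdot,\cdot\}^h$, $\{\cdot,\cdot,\cdot\}^v$, $[\cdot,\cdot,\cdot]^C$ now built from $\nw^1_N,\ne^1_N$), regroup the terms into copies of the corresponding axiom for $(A,\nw,\ne,\alpha)$, and clear the remainder with $N\circ\nw^2_N=\nw\circ N^{\otimes 3}$ and $N\circ\ne^2_N=\ne\circ N^{\otimes 3}$. A compact way to organize this is via the one-parameter family $\nw_\lambda=\nw+\lambda\nw^1_N+\lambda^2\nw^2_N$, $\ne_\lambda=\ne+\lambda\ne^1_N+\lambda^2\ne^2_N$ and $T_\lambda=\mathrm{id}_A+\lambda N$: the two displayed relations are precisely what is needed to check $T_\lambda\circ\alpha=\alpha\circ T_\lambda$ and $T_\lambda\nw_\lambda(x_1,x_2,x_3)=\nw(T_\lambda x_1,T_\lambda x_2,T_\lambda x_3)$ together with its $\ne$-analogue, and since $\operatorname{char}\mathbb{K}=0$ the map $T_\lambda$ is invertible for all but finitely many $\lambda$, so for those $\lambda$ it is a linear isomorphism transporting $(\nw_\lambda,\ne_\lambda)$ onto $(\nw,\ne)$ and hence $(A,\nw_\lambda,\ne_\lambda,\alpha)$ is a $3$-Hom-L-dendriform algebra; comparing coefficients of powers of $\lambda$ in \eqref{3-Hom-L-dendriform1}--\eqref{3-Hom-L-dendriform6} then yields the identities for $(\nw^1_N,\ne^1_N)$ once the contributions from $(\nw,\ne)$ and from $(\nw^2_N,\ne^2_N)$ are separated off using the cases already known. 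The main obstacle is nothing conceptual but the bulk of this bookkeeping: each axiom produces a sizeable number of terms in $\nw,\ne,N$, and some care is required because $\{\cdot,\cdot,\cdot\}^v$ in \eqref{accolade vertical} is not skew-symmetric in its first two arguments, unlike $\{\cdot,\cdot,\cdot\}^h$ and $[\cdot,\cdot,\cdot]^C$, so the symmetrizations must be expanded slot by slot rather than quoted.
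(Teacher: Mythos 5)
Your reduction of the Nijenhuis axioms \eqref{equNijenhuis1}--\eqref{equNijenhuis2} to $N\circ\nw^2_N=\nw\circ N^{\otimes 3}$ and $N\circ\ne^2_N=\ne\circ N^{\otimes 3}$ is correct, as are the preliminary skew-symmetry/multiplicativity checks and the identity $T_\lambda\circ\nw_\lambda=\nw\circ T_\lambda^{\otimes 3}$ (with its $\ne$-analogue) for $T_\lambda=\mathrm{id}_A+\lambda N$ and $\nw_\lambda=\nw+\lambda\nw^1_N+\lambda^2\nw^2_N$. But from there the argument does not reach the statement being proved. The morphism claim of the proposition is $N\nw^1_N(x,y,z)=\nw(Nx,Ny,Nz)$ and $N\ne^1_N(x,y,z)=\ne(Nx,Ny,Nz)$; what you derived is the same identity with $\nw^2_N,\ne^2_N$ in place of $\nw^1_N,\ne^1_N$, and these are genuinely different identities (they differ by $N$ applied to $\nw^2_N-\nw^1_N$, which the Nijenhuis condition does not control). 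In the binary pre-Lie or associative setting the deformed product carries a single $N$ and the Nijenhuis identity literally is the morphism property, but in this ternary, second-order setting your rewriting only says that $N$ intertwines the level-two operations with the original ones; asserting that this ``gives the morphism assertion'' for $(\nw^1_N,\ne^1_N)$ is a non sequitur.

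The coefficient-comparison step fails for the same structural reason. Granting that $(A,\nw_\lambda,\ne_\lambda,\alpha)$ is a $3$-Hom-L-dendriform algebra for infinitely many $\lambda$ (note also that invertibility of $T_\lambda$ for all but finitely many $\lambda$ presumes finite dimension, which is not assumed in this section; one would rather invert $T_\lambda$ over $\mathbb{K}[[\lambda]]$), each axiom becomes a polynomial identity in $\lambda$, but the coefficient of $\lambda^2$ is not the axiom for $(\nw^1_N,\ne^1_N)$: it is that axiom plus cross terms coupling $(\nw,\ne)$ with $(\nw^2_N,\ne^2_N)$, e.g. $\nw^2_N(\alpha(x_1),\alpha(x_2),\nw(x_3,x_4,x_5))+\nw(\alpha(x_1),\alpha(x_2),\nw^2_N(x_3,x_4,x_5))$ arising from \eqref{3-Hom-L-dendriform1}, and there is no ``already known'' identity that separates these off. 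Setting $\lambda=0$ isolates $(\nw,\ne)$, and the rescaling $\lambda^{-2}\nw_\lambda$ with $\lambda\to\infty$ isolates $(\nw^2_N,\ne^2_N)$ (the axioms are quadratic in the operations), but no specialization isolates the middle term. So your transport argument actually proves the analogue of the proposition for $(\nw^2_N,\ne^2_N)$ -- which is the version familiar from the $3$-Lie case in \cite{ShengTang} -- while the stated claim about $(\nw^1_N,\ne^1_N)$ would still require a direct term-by-term verification from \eqref{equNijenhuis1}--\eqref{equNijenhuis2} (the paper itself offers none, saying only ``straightforward''); as it stands, your proposal neither performs that verification nor flags the discrepancy between the level-one statement and the level-two identity your method delivers.
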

\begin{proof}Straightforward.
\end{proof}
\begin{pro}
Let $N$ be a Nijenhuis operator on a $3$-Hom-L-dendriform algebras $(A,\nw,\ne,\alpha)$. Then $N$ is a  Nijenhuis operator on their sub-adjacent  $3$-Hom-pre-Lie algebras  $(A,\{\cdot,\cdot,\cdot\}^h,\alpha)$, $(A,\{\cdot,\cdot,\cdot\}^v,\alpha)$ and its  sub-adjacent  $3$-Hom-Lie algebra  $(A,[\cdot,\cdot,\cdot]^C,\alpha)$.

\end{pro}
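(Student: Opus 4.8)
The plan is to isolate a single purely formal fact and then invoke it three times. First I would observe that, for \emph{any} ternary operation $\mu:\otimes^3A\to A$, one may form the iterated operations $\mu^1_N$ and $\mu^2_N$ by the very recipe used for $\nw,\ne$ in \eqref{Nw1N}--\eqref{Ne1N} and the two displays following them (replacing $\nw$, resp.\ $\ne$, by $\mu$ throughout); and that, granting $\a\circ N=N\circ\a$, the Nijenhuis identity for $\mu$ --- in the expanded shape written just after \eqref{equNijenhuis1}--\eqref{equNijenhuis2} --- is equivalent to the single equation $\mu(N(x_1),N(x_2),N(x_3))=N\big(\mu^2_N(x_1,x_2,x_3)\big)$ for all $x_i\in A$. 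The key remark is then that the assignment $\mu\mapsto\mu^2_N$, equivalently the binomial sum $\sum_{I}(-1)^{|I|-1}N^{|I|}\mu(\widetilde N(x_1),\widetilde N(x_2),\widetilde N(x_3))$, is $\mathbb K$-linear in $\mu$ (each summand is) and $S_3$-equivariant under permutations of the three argument slots (a permutation merely relabels the index sets $I\subseteq\{1,2,3\}$ without changing $|I|$). Consequently, if $N$ satisfies the Nijenhuis identity for $\mu$, it satisfies it for $\mu\circ\sigma$ for every $\sigma\in S_3$, and hence for every finite $\mathbb K$-linear combination of permuted copies of such operations.

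With this in hand I would simply read off the three brackets. The hypothesis says $N$ satisfies \eqref{equNijenhuis1}--\eqref{equNijenhuis2}, i.e.\ the Nijenhuis identity for $\nw$ and for $\ne$. By \eqref{accolade horizintal}, $\{\c,\c,\c\}^h=\nw+\ne-\ne\circ\tau_{12}$ with $\tau_{12}$ the transposition of the first two arguments, so the formal fact immediately yields the Nijenhuis identity for $\{\c,\c,\c\}^h$; by \eqref{accolade vertical}, $\{\c,\c,\c\}^v$ is again a $\mathbb K$-linear combination of $\nw$ and argument-permutations of $\ne$, so the same applies; and by \eqref{crochet}, $[\c,\c,\c]^C=\circlearrowleft_{1,2,3}\{\c,\c,\c\}^h$ is a linear combination of argument-permutations of an operation for which $N$ is already known to be Nijenhuis. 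Since $\a\circ N=N\circ\a$ is part of the hypothesis and is the remaining axiom of a Nijenhuis operator on each of these structures, and since $(A,\{\c,\c,\c\}^h,\a)$, $(A,\{\c,\c,\c\}^v,\a)$ are $3$-Hom-pre-Lie algebras and $(A,[\c,\c,\c]^C,\a)$ a $3$-Hom-Lie algebra by Proposition \ref{3LDendTo3PreLie} and the corollary following it, this proves the statement.

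I expect the only genuine (and modest) obstacle to be making the $S_3$-equivariance of the binomial sum watertight: one must check that relabelling the arguments carries each index set $I$ to one of the same cardinality, so that no sign or power of $N$ is disturbed; everything else is bookkeeping. If one prefers to avoid the abstract lemma, the same conclusion is reached by a longer but entirely mechanical computation --- substitute \eqref{accolade horizintal} into the expanded Nijenhuis identity for $\{\c,\c,\c\}^h$, expand the terms $N\{N(x_1),N(x_2),x_3\}^h$ and their analogues, and match against the three instances of \eqref{equNijenhuis1}--\eqref{equNijenhuis2} attached to $\nw$, $\ne$ and $\ne\circ\tau_{12}$; the contributions cancel triple by triple, and $\{\c,\c,\c\}^v$ and $[\c,\c,\c]^C$ are handled identically.
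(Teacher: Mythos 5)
Your proposal is correct, and at bottom it runs on the same mechanism as the paper's proof: the Nijenhuis condition, written as the single expanded identity following \eqref{equNijenhuis1}--\eqref{equNijenhuis2}, is $\mathbb{K}$-linear in the ternary operation and stable under permuting the argument slots, and $\{\c,\c,\c\}^h$, $\{\c,\c,\c\}^v$, $[\c,\c,\c]^C$ are precisely linear combinations of permuted copies of $\nw$ and $\ne$. The difference is only in packaging: the paper carries out the explicit expand-and-regroup computation that you describe as the mechanical fallback, and only for the horizontal bracket --- it writes $\{Nx,Ny,Nz\}^h=\nw(Nx,Ny,Nz)+\ne(Nx,Ny,Nz)-\ne(Ny,Nx,Nz)$, applies \eqref{equNijenhuis1} to the first term and \eqref{equNijenhuis2} to the other two (the last with arguments $(y,x,z)$), regroups the right-hand sides into the Nijenhuis identity for $\{\c,\c,\c\}^h$, and disposes of $\{\c,\c,\c\}^v$ and $[\c,\c,\c]^C$ with ``similarly''. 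Your abstract lemma is a clean formalization that treats all three brackets uniformly and makes the ``similarly'' rigorous; the $S_3$-equivariance point you flag does go through, since a permutation $\sigma$ of the slots carries each index set $I$ to $\sigma^{-1}(I)$ of the same cardinality, so neither the sign $(-1)^{|I|-1}$ nor the factor $N^{|I|}$ is disturbed, while the remaining axiom $N\circ\alpha=\alpha\circ N$ is part of the hypothesis and is untouched by passing to the induced brackets.
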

\begin{proof}  Let $x,y,z\in A$, then we have
\begin{align*}
   \{Nx,Ny,Nz\}^h=&\nw(Nx,Ny,Nz)+\ne(Nx,Ny,Nz)-\ne(Ny,Nx,Nz)  \\
   =&N(\nw(Nx,Ny,z)+\nw(Nx,y,Nz)+\nw(x,Ny,Nz)\\
&-N(\nw(Nx,y,z)+\nw(x,Ny,z)+\nw(x,y,Nz))+N^2\nw(x,y,z))\\
&+N(\ne(Nx,Ny,z)+\ne(Nx,y,Nz)+\ne(x,Ny,Nz)\nonumber\\
&-N(\ne(Nx,y,z)+\ne(x,Ny,z)+\ne(x,y,Nz))+N^2\ne(x,y,z))\\
&-N(\ne(Ny,Nx,z)-\ne(Ny,x,Nz)-\ne(y,Nx,Nz)\\\
&+N(\ne(Ny,x,z)+\ne(y,Nx,z)+\ne(y,x,Nz))-N^2\ne(y,x,z)))\\
=&N(\{Nx,Ny,z\}^h+\{Nx,y,Nz\}^h+\{x,Ny,Nz\}^h\nonumber\\
&-N(\{Nx,y,z\}^h+\{x,Ny,z\}^h+\{x,y,Nz\}^h)+N^2\{x,y,z\}^h).
\end{align*}
Therefor  $N$ is a Nijenhuis operator on  sub-adjacent  $3$-Hom-pre-Lie algebras  $(A,\{\cdot,\cdot,\cdot\}^h,\alpha)$. Similarly, we can found that $N$ is a Nijenhuis operator on  $3$-Hom-pre-Lie algebras  $(A,\{\cdot,\cdot,\cdot\}^v,\alpha)$ and on the  sub-adjacent  $3$-Hom-Lie algebra  $(A,[\cdot,\cdot,\cdot]^C,\alpha)$.
\end{proof}
\begin{pro}
Let $N$ be a Nijenhuis operator on a $3$-Hom-pre-Lie algebra  $(A,\{\cdot,\cdot,\cdot\},\alpha)$ and $R:A\to A$ be a Rota-Baxter operator such that $N\circ R=R\circ N$. Then $N$ is a  Nijenhuis operator on  $3$-Hom-L-dendriform algebras $(A,\nw,\ne,\alpha)$ defined in Corollary \ref{3HomLden by invert O-op}.

\end{pro}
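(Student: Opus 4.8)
The plan is to reduce the two defining identities of a Nijenhuis operator on $(A,\nw,\ne,\alpha)$ to the single Nijenhuis identity that $N$ already satisfies on the $3$-Hom-pre-Lie algebra $(A,\{\c,\c,\c\},\a)$, exploiting the commutation $N\circ R=R\circ N$ to slide the Rota--Baxter operator past $N$. First I would note that, since $N$ is a Nijenhuis operator on $(A,\{\c,\c,\c\},\a)$, the compatibility $\a\circ N=N\circ\a$ is already part of the hypothesis, so the twisting-map condition for a Nijenhuis operator on the $3$-Hom-L-dendriform structure comes for free. It then remains to check \eqref{equNijenhuis1} and \eqref{equNijenhuis2} for the operations $\nw(x,y,z)=\{Rx,Ry,z\}$ and $\ne(x,y,z)=\{x,Ry,Rz\}$ furnished by Corollary \ref{3HomLden by invert O-op}.

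For \eqref{equNijenhuis1} I would start from
\begin{align*}
\nw(Nx,Ny,Nz)=\{RNx,RNy,Nz\}=\{N(Rx),N(Ry),Nz\},
\end{align*}
where the last equality uses $RN=NR$. Applying the Nijenhuis identity of $N$ on $(A,\{\c,\c,\c\},\a)$ to the triple $(Rx,Ry,z)$ expands this as
\begin{align*}
N\big(&\{N(Rx),N(Ry),z\}+\{N(Rx),Ry,Nz\}+\{Rx,N(Ry),Nz\}\\
&-N(\{N(Rx),Ry,z\}+\{Rx,N(Ry),z\}+\{Rx,Ry,Nz\})+N^2\{Rx,Ry,z\}\big).
\end{align*}
Pushing $N$ back through $R$ in every term via $N(Ra)=R(Na)$ and reading off the definition of $\nw$, each summand becomes the corresponding $\nw$-term, which is exactly the right-hand side of \eqref{equNijenhuis1}.

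The verification of \eqref{equNijenhuis2} is the mirror image: write $\ne(Nx,Ny,Nz)=\{Nx,RNy,RNz\}=\{Nx,N(Ry),N(Rz)\}$, apply the Nijenhuis identity of $N$ to the triple $(x,Ry,Rz)$, and again slide $N$ through $R$ to recognize the $\ne$-terms. The argument is entirely formal once $NR=RN$ is in hand; the only point that requires a little care is the bookkeeping of the seven terms in the Nijenhuis expansion and the observation that the commutation relation is used in precisely the two slots where $R$ has been inserted by the definitions of $\nw$ and $\ne$. I do not expect any genuine obstacle here, and the hypothesis $N\circ R=R\circ N$ is exactly what makes this reduction possible.
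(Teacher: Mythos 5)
Your proof is correct: since $\alpha N=N\alpha$ is part of the hypothesis, the whole statement reduces to the two identities \eqref{equNijenhuis1}--\eqref{equNijenhuis2}, and your device of applying the $3$-Hom-pre-Lie Nijenhuis identity of $N$ to the triples $(Rx,Ry,z)$ and $(x,Ry,Rz)$ and then sliding $N$ past $R$ via $NR=RN$ does convert each of the seven resulting terms into the corresponding $\nw$- (resp.\ $\ne$-) term, exactly as needed. The paper omits this verification altogether (``It can be checked immediately''), and your computation is precisely the direct check it has in mind, so there is nothing to add.
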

\begin{proof}
It can be checked immediately. So we omit details.
\end{proof}
It obvious to show that:
\begin{pro}
Let $(A,\nw,\ne,\alpha)$ be a $3$-Hom-L-dendriform algebra. If an endomorphism $N$ is a derivation, then $N$ is a Nijenhuis operator if and only if  $N$ is a Rota-Baxter operator  of weight $0$ on $A$.
\end{pro}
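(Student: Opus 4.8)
The plan is to write out both notions explicitly for a $3$-Hom-L-dendriform algebra and observe that the derivation hypothesis is precisely what forces the quadratic ``correction terms'' in the Nijenhuis identity to cancel, leaving exactly the Rota-Baxter identity; the equivalence is then one and the same computation read in two directions.

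First I would fix the working definitions, all of which are the componentwise Hom-analogues of the classical ones. We call $N$ a \emph{derivation} of $(A,\nw,\ne,\alpha)$ if $N\circ\alpha=\alpha\circ N$ and, for all $x,y,z\in A$,
$$N\nw(x,y,z)=\nw(Nx,y,z)+\nw(x,Ny,z)+\nw(x,y,Nz),$$
together with the analogous identity for $\ne$; and $N$ is a \emph{Rota-Baxter operator of weight $0$} if $N\circ\alpha=\alpha\circ N$ and
$$\nw(Nx,Ny,Nz)=N\big(\nw(Nx,Ny,z)+\nw(Nx,y,Nz)+\nw(x,Ny,Nz)\big),$$
together with the analogous identity for $\ne$ (this is the $V=A$, $l=L_\nw$, $r=R_\ne$ specialization of an $\mathcal O$-operator, exactly as for $3$-Hom-pre-Lie algebras). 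Since the compatibility $N\circ\alpha=\alpha\circ N$ occurs in the Nijenhuis, derivation and Rota-Baxter definitions alike, it may be carried along throughout and contributes nothing to the argument.

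The core step is a direct substitution. Starting from the expanded form of the Nijenhuis identity \eqref{equNijenhuis1} for $\nw$, the derivation identity gives $\nw(Nx,y,z)+\nw(x,Ny,z)+\nw(x,y,Nz)=N\nw(x,y,z)$, hence
$$-N\big(\nw(Nx,y,z)+\nw(x,Ny,z)+\nw(x,y,Nz)\big)+N^2\nw(x,y,z)=-N^2\nw(x,y,z)+N^2\nw(x,y,z)=0.$$
Therefore, for a derivation $N$, the Nijenhuis identity for $\nw$ is equivalent to $\nw(Nx,Ny,Nz)=N\big(\nw(Nx,Ny,z)+\nw(Nx,y,Nz)+\nw(x,Ny,Nz)\big)$, which is exactly the Rota-Baxter identity for $\nw$; the same reduction applies verbatim to \eqref{equNijenhuis2} and $\ne$. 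Reading these equivalences from left to right shows a derivation that is Nijenhuis is Rota-Baxter, and reading them from right to left shows the converse.

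I do not expect a genuine obstruction: the argument is a two-line telescoping of the cubic terms, repeated identically for $\nw$ and $\ne$. The only point needing care is the choice of definition of ``derivation'' — one must take the untwisted componentwise Leibniz rule (no powers of $\alpha$ inserted), since with an $\alpha^k$-twisted Leibniz rule the correction terms would no longer collapse so cleanly, and the clean equivalence would fail. With that convention the proposition is immediate.
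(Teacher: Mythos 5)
Your argument is correct: with the untwisted Leibniz rule, the terms $-N(\nw(Nx,y,z)+\nw(x,Ny,z)+\nw(x,y,Nz))+N^2\nw(x,y,z)$ inside the Nijenhuis identity \eqref{equNijenhuis1} (and likewise for \eqref{equNijenhuis2}) telescope to zero, so the Nijenhuis condition reduces exactly to the componentwise Rota--Baxter identity for $\nw$ and $\ne$, in both directions. The paper states this proposition without proof (``it is obvious''), and your substitution is precisely the intended verification, including the sensible reading of ``Rota--Baxter operator of weight $0$'' as the splitting identity applied to each of the two operations.
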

\subsection{Product and complex structures on $3$-Hom-L-dendriform algebras}
Throughout  this section, we work over the field $\mathbb{R}$ the complex filed $\mathbb{C}$ and all the vector spaces are fined dimension. We  introduce the notion of a product and complex structure on a $3$-Hom-L-dendriform algebra using the Nijenhuis
condition as the integrability condition.

\begin{defi}
  Let $(A,\nw,\ne,\alpha)$ be a $3$-Hom-L-dendriform algebra. An almost product structure on the $3$-Hom-L-dendriform algebra  $(A,\nw,\ne,\alpha)$ is a linear endomorphism $E:A\to A$ satisfying $E^2 = id_A$. An almost
product structure is called a product structure if it is a Nijenhuis operator.
\end{defi}
\begin{rem}
  One can understand a product structure on the $3$-Hom-L-dendriform algebra  $(A,\nw,\ne,\alpha)$ as an linear map  $E:A\to A$ satisfying
  \begin{eqnarray}
  E^2=id_A,&&E\a=\a E,\nonumber\\
E\nw(x,y,z)&=&\nw(Ex,Ey,Ez)+\nw(Ex,y,z)+\nw(x,Ey,z)+\nw(x,y,Ez)\nonumber\\
\label{product structure1}&&-E\nw(Ex,Ey,z)-E\nw(x,Ey,Ez)-E\nw(Ex,y,Ez),\\
E\ne(x,y,z)&=&\ne(Ex,Ey,Ez)+\ne(Ex,y,z)+\ne(x,Ey,z)+\ne(x,y,Ez)\nonumber\\
&&-E\ne(Ex,Ey,z)-E\ne(x,Ey,Ez)-E\ne(Ex,y,Ez)
\label{product structure2}.
\end{eqnarray}

\end{rem}
\begin{thm}\label{product-structure-subalgebra}
Let $(A,\nw,\ne,\alpha)$ be a $3$-Hom-L-dendriform algebra. Then $(A,\nw,\ne,\alpha)$ has a product structure if and only if $ A$ admits a decomposition:
\begin{eqnarray}
 A= A_+\oplus A_-,
\end{eqnarray}
where $ A_+$ and $ A_-$ are subalgebras of $ A$.
\end{thm}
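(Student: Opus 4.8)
The plan is to prove both directions by constructing the eigenspace decomposition associated with $E$, exactly as one does for product structures on Lie algebras and $3$-Hom-Lie algebras.

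\medskip

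\noindent\textbf{($\Rightarrow$)} Suppose $E$ is a product structure. Since $E^2=\mathrm{id}_A$ and $\mathrm{char}(\mathbb K)=0$, the operator $E$ is diagonalizable with eigenvalues $\pm 1$; set $A_{+}=\{x\in A\mid E(x)=x\}$ and $A_{-}=\{x\in A\mid E(x)=-x\}$, so that $A=A_{+}\oplus A_{-}$ as vector spaces, with projections $\tfrac{1}{2}(\mathrm{id}+E)$ and $\tfrac{1}{2}(\mathrm{id}-E)$. From $E\alpha=\alpha E$ we get $\alpha(A_{\pm})\subseteq A_{\pm}$, so each summand is stable under the twisting map. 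The remaining point is that $A_{+}$ and $A_{-}$ are closed under $\nw$ and $\ne$. For $x,y,z\in A_{+}$, substitute $Ex=x$, $Ey=y$, $Ez=z$ into the expanded Nijenhuis identity \eqref{product structure1}; the right-hand side collapses to $4\,\nw(x,y,z)-3E\,\nw(x,y,z)$, and equating with the left-hand side $E\,\nw(x,y,z)$ yields $E\,\nw(x,y,z)=\nw(x,y,z)$, i.e.\ $\nw(x,y,z)\in A_{+}$; the same substitution in \eqref{product structure2} gives $\ne(x,y,z)\in A_{+}$. For $x,y,z\in A_{-}$, substitute $Ex=-x$, $Ey=-y$, $Ez=-z$ into \eqref{product structure1}: the cubic term contributes $-\nw(x,y,z)$, the three linear-in-$E$ terms contribute $-3\,\nw(x,y,z)$, and the three $E$-of-quadratic terms contribute $-3E\,\nw(x,y,z)$, so the right-hand side is $-4\,\nw(x,y,z)-3E\,\nw(x,y,z)$; equating with $E\,\nw(x,y,z)$ forces $E\,\nw(x,y,z)=-\nw(x,y,z)$, so $\nw(x,y,z)\in A_{-}$, and likewise for $\ne$. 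Hence $A_{\pm}$ are $3$-Hom-L-dendriform subalgebras.

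\medskip

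\noindent\textbf{($\Leftarrow$)} Conversely, suppose $A=A_{+}\oplus A_{-}$ with both summands subalgebras (in particular $\alpha$-stable, by the definition of subalgebra). Define $E:A\to A$ by $E(x_{+}+x_{-})=x_{+}-x_{-}$ for $x_{\pm}\in A_{\pm}$. Then $E^2=\mathrm{id}_A$, and $E\alpha=\alpha E$ because $\alpha$ preserves the decomposition. It remains to verify the two Nijenhuis identities \eqref{product structure1} and \eqref{product structure2}. By trilinearity of $\nw$ and $\ne$ it suffices to check them when each of $x,y,z$ lies in $A_{+}$ or in $A_{-}$, giving eight cases for each identity; in every case the relevant product $\nw(x,y,z)$ lands in the subalgebra determined by the parity of the arguments, and a short bookkeeping of signs — identical to the two computations in the forward direction when all three arguments share a parity, and an easy count otherwise — shows both sides agree. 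Thus $E$ is a Nijenhuis operator, hence a product structure.

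\medskip

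\noindent I expect the only mildly delicate point to be the mixed-parity cases in the converse (say $x,y\in A_{+}$, $z\in A_{-}$): there one must track that $\nw(x,y,z)\in A_{-}$, so $E\,\nw(x,y,z)=-\nw(x,y,z)$, and that in the right-hand side of \eqref{product structure1} the surviving terms (after using $Ex=x$, $Ey=y$, $Ez=-z$) rearrange to exactly $-\nw(x,y,z)$. This is purely a sign count with no conceptual obstacle; the argument is formally the same as for product structures on $3$-Hom-Lie algebras \cite{ShengTang}.
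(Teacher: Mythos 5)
Your overall route is the same as the paper's: diagonalize $E$, take $A_{\pm}$ to be the $\pm1$-eigenspaces, and in the converse let $E$ act as $+1$ on $A_+$ and $-1$ on $A_-$; your forward direction (including the all-minus sign count) and the pure-parity cases of the converse are correct. The flaw is in how you dispose of the mixed-parity cases of the converse. You assert that ``in every case the relevant product $\nw(x,y,z)$ lands in the subalgebra determined by the parity of the arguments,'' and in your closing paragraph you lean on $\nw(A_+,A_+,A_-)\subseteq A_-$ so that $E\nw(x,y,z)=-\nw(x,y,z)$. That inclusion is not available: the hypothesis is only that $A_+$ and $A_-$ are each closed under $\nw$, $\ne$ and $\alpha$, and a direct sum of two subalgebras imposes no constraint whatsoever on products whose arguments come from both summands (that kind of control is precisely what the stronger strict/perfect structures later in the section encode). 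So the mixed-case step, as you describe it, would stall at an unprovable claim.

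The repair is easy, and in fact easier than what you propose: in the mixed cases the integrability identities hold formally, with no information about where the product lies. For instance, for $x,y\in A_+$, $z\in A_-$, substituting $Ex=x$, $Ey=y$, $Ez=-z$ into the right-hand side of \eqref{product structure1}, the four terms without an outer $E$ give $-\nw(x,y,z)+\nw(x,y,z)+\nw(x,y,z)-\nw(x,y,z)=0$, while $-E\nw(Ex,Ey,z)-E\nw(x,Ey,Ez)-E\nw(Ex,y,Ez)=-E\nw(x,y,z)+E\nw(x,y,z)+E\nw(x,y,z)=E\nw(x,y,z)$, which is exactly the left-hand side; the case of one argument in $A_+$ and two in $A_-$, and the identity \eqref{product structure2}, work the same way. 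The subalgebra hypothesis is needed only in the two pure-parity cases, exactly as in your computation. With this correction your argument is complete and coincides with the paper's proof, which verifies the all-$A_+$ case explicitly and dismisses the remaining cases with ``similarly.''
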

\begin{proof}

Let $E$ be a product structure on $ A$. By $E^2=id_A$ , we have $ A= A_+\oplus A_-$, where $ A_+$ and $ A_-$ are the eigenspaces of $ A$ associated to the eigenvalues $\pm1$. For all $x\in A_+$, we have $\a(x)\in A_+$, in fact
$$E\a(x)=\a E(x)=\a(x).$$
Now, let $x_1,x_2,x_3\in A_+$, we have
\begin{eqnarray*}
E\nw(x_1,x_2,x_3)&=&\nw(Ex_1,Ex_2,Ex_3)+\nw(Ex_1,x_2,x_3)+\nw(x_1,Ex_2,x_3)+\nw(x_1,x_2,Ex_3)\\
&&-E\nw(Ex_1,Ex_2,x_3)-E\nw(x_1,Ex_2,Ex_3)-E\nw(Ex_1,x_2,Ex_3)\\
&=&4\nw(x_1,x_2,x_3)-3E\nw(x_1,x_2,x_3).
\end{eqnarray*}
Thus, we have $\nw(x_1,x_2,x_3)\in A_{+}$. We can also show that $\ne(x_1,x_2,x_3)\in A_{+}$, which implies that $ A_+$ is a subalgebra. Similarly, we can show that $ A_-$ is a subalgebra.

Conversely, we define a linear endomorphism $E: A\to A$ by
\begin{eqnarray}\label{eq:productE}
E(x+a)=x-a,\,\,\,\,\forall x\in A_+,a\in A_-.
\end{eqnarray}
Obviously we have $E^2=id_A$ and $E\a=\a E$.

Since $ A_+$ is a subalgebra of $ A$, for all $x_1,x_2,x_3\in A_+$, we have
\begin{eqnarray*}
&&\nw(Ex_1,Ex_2,Ex_3)+\nw(Ex_1,x_2,x_3)+\nw(x_1,Ex_2,x_3)+\nw(x_1,x_2,Ex_3)\\
&&-E\nw(Ex_1,Ex_2,x_3)-E\nw(x_1,Ex_2,Ex_3)-E\nw(Ex_1,x_2,Ex_3)\\
&=&4\nw(x_1,x_2,x_3)-3E\nw(x_1,x_2,x_3)=\nw(x_1,x_2,x_3)\\
&=&E\nw(x_1,x_2,x_3),
\end{eqnarray*}
which implies that \eqref{product structure1} holds for all $x_1,x_2,x_3\in A_+$ and by the same computation, we can prove the identity \eqref{product structure2}. Similarly, we can show that \eqref{product structure1}-\eqref{product structure2} holds for all $x,y,z\in A$.
Therefore,   $E$ is a product structure on $ A$.  \end{proof}
\begin{pro}
  Let $E$ be an almost product structure on a $3$-Hom-L-dendriform algebra $(A,\nw,\ne,\alpha)$ commuting wilth $\a$.
   \begin{enumerate}
     \item If $E$ satisfies
     \begin{equation}\label{abel-product-0}
     \begin{split}
&E\nw(x,y,z)=\nw(Ex,y,z)=\nw(x,y,Ez),\\
&E\ne(x,y,z)=\ne(Ex,y,z)=\ne(x,Ey,z)=\ne(x,y,Ez),\ \forall \ x,y,z\in A.
\end{split}
\end{equation}
Then $E$ is a product structure on $A$ called a strict product structure.
     \item If $E$ satisfies
     \begin{eqnarray}\label{abel-product}
&&\nw(x,y,z)=-\nw(x,Ey,Ez)-\nw(Ex,y,Ez)-\nw(Ex,Ey,z),\\&&\ne(x,y,z)=-\ne(x,Ey,Ez)-\ne(Ex,y,Ez)-\ne(Ex,Ey,z),\ \forall \ x,y,z\in A.
\end{eqnarray}
Then $E$ is a product structure on $A$ called an abilian  product structure.
 \item If $E$ satisfies
     \begin{eqnarray}
&&\nw(x,y,z)=E(\nw(x,y,Ez)+\nw(Ex,y,z)+\nw(x,Ey,z)),\\&&\ne(x,y,z)=E(\ne(x,y,Ez)+\ne(Ex,y,z)+\ne(x,Ey,z)),\ \forall \ x,y,z\in A.
\end{eqnarray}
Then $E$ is a product structure on $A$ called an strong  product structure.
\item If $E$ satisfies
     \begin{eqnarray}\label{abel-product-0}
E\nw(x,y,z)=\nw(Ex,Ey,Ez),\ E\ne(x,y,z)=\ne(Ex,Ey,Ez),\ \forall \ x,y,z\in A.
\end{eqnarray}
Then $E$ is a product structure on $A$ called a perfect product structure.

   \end{enumerate}\end{pro}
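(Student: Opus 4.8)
The plan is to check, in each of the four cases, the two defining identities of a Nijenhuis operator --- equivalently, in their expanded form, the identities \eqref{product structure1} and \eqref{product structure2} of the preceding Remark --- directly from the special hypothesis imposed on $E$, using nothing but $E^2=\mathrm{id}_A$. The compatibility $E\a=\a E$ is part of the hypothesis (``commuting with $\a$''), so that requirement of the Nijenhuis condition needs no argument. Since the computations for $\nw$ and for $\ne$ are word-for-word the same, it suffices to carry them out for $\nw$.

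For bookkeeping I would write $\nw_{ijk}$ for $\nw$ evaluated with $E$ inserted in the slots marked by the nonzero entries of $(i,j,k)\in\{0,1\}^3$; thus $\nw_{000}=\nw(x,y,z)$, $\nw_{100}=\nw(Ex,y,z)$, $\nw_{110}=\nw(Ex,Ey,z)$, and so on. With this notation the right-hand side of \eqref{product structure1} is
$$\nw_{111}+\nw_{100}+\nw_{010}+\nw_{001}-E\nw_{110}-E\nw_{011}-E\nw_{101},$$
and in every case the goal is to collapse it to $E\nw_{000}=E\nw(x,y,z)$. The single recurring step is: (i) apply $E$ to the defining relation and invoke $E^2=\mathrm{id}$ to obtain its ``dual'' form; (ii) substitute $x\mapsto Ex$, $y\mapsto Ey$ and/or $z\mapsto Ez$ in the defining relation so as to reduce the double- and triple-insertion terms $\nw_{110},\nw_{101},\nw_{011},\nw_{111}$ to single-insertion ones; (iii) collect. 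For the perfect case $E\nw(x,y,z)=\nw(Ex,Ey,Ez)$, replacing $z$ by $Ez$ gives $\nw_{110}=E\nw_{001}$, hence $E\nw_{110}=\nw_{001}$, and likewise $E\nw_{011}=\nw_{100}$ and $E\nw_{101}=\nw_{010}$; the three negative terms then cancel $\nw_{100}+\nw_{010}+\nw_{001}$ and only $\nw_{111}=E\nw(x,y,z)$ survives. For the abelian case, applying $E$ to $\nw_{000}=-\nw_{011}-\nw_{101}-\nw_{110}$ turns the three negative terms into $E\nw_{000}$, while substituting $x\mapsto Ex$ yields $\nw_{100}=-\nw_{111}-\nw_{001}-\nw_{010}$, which annihilates the four positive terms. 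For the strong case the same routine applies, the only new input being that substituting $x\mapsto Ex$, $y\mapsto Ey$, $z\mapsto Ez$ in $\nw_{000}=E(\nw_{001}+\nw_{100}+\nw_{010})$ gives $\nw_{111}=E\nw_{110}+E\nw_{011}+E\nw_{101}$, after which the right-hand side reduces to $\nw_{100}+\nw_{010}+\nw_{001}=E\nw(x,y,z)$.

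The strict case needs one preliminary step: the hypothesis there, $E\nw(x,y,z)=\nw(Ex,y,z)=\nw(x,y,Ez)$, only constrains the first and third slots, but combining it with the skew-symmetry \eqref{3-Hom-L-dendriform0} of $\nw$ in its first two arguments upgrades it to ``$E$ can be moved onto any single slot'': $E\nw(x,y,z)=\nw(Ex,y,z)=\nw(x,Ey,z)=\nw(x,y,Ez)$. Applying such a relation twice in two distinct slots and using $E^2=\mathrm{id}$ then forces $\nw_{110}=\nw_{101}=\nw_{011}=\nw(x,y,z)$ and $\nw_{111}=E\nw(x,y,z)$, so the right-hand side of \eqref{product structure1} becomes $4E\nw(x,y,z)-3E\nw(x,y,z)=E\nw(x,y,z)$ --- exactly the cancellation pattern already used in the proof of Theorem~\ref{product-structure-subalgebra}. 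The $\ne$-identity \eqref{product structure2} is then handled verbatim, replacing $\nw$ by $\ne$ throughout.

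I do not expect any genuine obstacle: once the correct argument-substitution is identified, each case is a two- or three-line computation. The only point demanding a little care is the strict case, where one must first recover the missing middle-slot relation from \eqref{3-Hom-L-dendriform0} before the terms cancel; everywhere else the hypothesis is already symmetric enough that the substitutions go through immediately, and in all four cases the verification holds for all $x,y,z\in A$ without any auxiliary decomposition.
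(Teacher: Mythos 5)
Your proposal is correct and follows essentially the same route as the paper: in each of the four cases one verifies the expanded Nijenhuis conditions \eqref{product structure1}--\eqref{product structure2} directly from $E^2=\mathrm{id}_A$, $E\a=\a E$ and the special hypothesis, by substituting $E$-twisted arguments into the defining relation and collecting terms (the paper writes out cases 1 and 2 and declares 3 and 4 similar, exactly as your sketches confirm). Your preliminary step in the strict case --- upgrading to the middle-slot relation $E\nw(x,y,z)=\nw(x,Ey,z)$ via the skew-symmetry \eqref{3-Hom-L-dendriform0} --- is valid but not actually needed, since the paper's computation moves $E$ only through the first and third slots; this is a harmless variant, not a gap.
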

   \begin{proof}
     \begin{enumerate}
       \item By \eqref{abel-product-0} and $E^2=id_A$, we have
\begin{eqnarray*}
&&\nw(Ex,Ey,Ez)+\nw(Ex,y,z)+\nw(x,Ey,z)+\nw(x,y,Ez)\\
&&-E\nw(Ex,Ey,z)-E\nw(x,Ey,Ez)-E\nw(Ex,y,Ez)\\
&=&\nw(Ex,Ey,Ez)+E\nw(x,y,z)+\nw(x,Ey,z)+\nw(x,y,Ez)\\
&&-\nw(E^2x,Ey,z)-\nw(Ex,Ey,Ez)-\nw(E^2x,y,Ez)\\
&=&E\nw(x,y,z).
\end{eqnarray*}
Similarly, we can prove the identity \eqref{product structure2}.
Thus, $E$ is a product structure on $A$.
       \item By \eqref{abel-product} and $E^2=id_A$, we have
\begin{eqnarray*}
&&\nw(Ex,Ey,Ez)+\nw(Ex,y,z)+\nw(x,Ey,z)+\nw(x,y,Ez)\\
&&-E\nw(Ex,Ey,z)-E\nw(x,Ey,Ez)-E\nw(Ex,y,Ez)\\
&=&-\nw(Ex,E^2y,E^2z)-\nw(E^2x,Ey,E^2z)-\nw(E^2x,E^2y,Ez)\\
&&+\nw(Ex,y,z)+\nw(x,Ey,z)+\nw(x,y,Ez)+E\nw(x,y,z)\\
&=&E\nw(x,y,z).
\end{eqnarray*}
Thus,   $E$ is a product structure on $A$.\\
Items 3. and 4.  can be proved  similarly.
     \end{enumerate}
   \end{proof}
   \begin{rem}
     A strict product structure on a $3$-Hom-L-dendriform algebra    is a perfect product structure.
   \end{rem}

\begin{defi}\label{complex}
  Let $(A,\nw,\ne,\alpha)$ be a $3$-Hom-L-dendriform algebra. An almost complex  structure on the $3$-Hom-L-dendriform algebra  $(A,\nw,\ne,\alpha)$ is a linear endomorphism $J:A\to A$ satisfying $J^2 = -id_A$. An almost
 structure is called a complex  structure if it is a Nijenhuis operator.
\end{defi}
\begin{rem}\label{complex1}
  One can understand a complex  structure on the $3$-Hom-L-dendriform algebra  $(A,\nw,\ne,\alpha)$ as an linear map  $J:A\to A$ satisfying
  \begin{eqnarray}
  J^2=-id_A,&&J\a=\a J,\nonumber\\
J\nw(x,y,z)&=&-\nw(Jx,Jy,Jz)+\nw(Jx,y,z)+\nw(x,Jy,z)+\nw(x,y,Jz)\nonumber\\
\label{product complex structure1}&&+J\nw(Jx,Jy,z)+J\nw(x,Jy,Jz)+J\nw(Jx,y,Jz),\\
J\ne(x,y,z)&=&-\ne(Jx,Jy,Jz)+\ne(Jx,y,z)+\ne(x,Jy,z)+\ne(x,y,Jz)\nonumber\\
&&+J\ne(Jx,Jy,z)+J\ne(x,Jy,Jz)+J\ne(Jx,y,Jz)
\label{product complex structure2}.
\end{eqnarray}

\end{rem}

\begin{rem}
One can also use definition \ref{complex} to define the notion of a complex structure on a
complex $3$-Hom-L-dendriform algebra, considering $J$ to be $\mathbb C$-linear. However, this is not very interesting since
for a complex $3$-Hom-L-dendriform algebra, there is a one-to-one correspondence between such $\mathbb C$-linear complex
structures and product structures.
\end{rem}

Consider  $A_{\mathbb C}=A\otimes_{\mathbb R} \mathbb C\cong\{x+iy|x,y\in A\}$, the complexification of the
real $3$-Hom-L-dendriform algebra $A$. We will denote it by $(A_{\mathbb C},\nw_{A_{\mathbb C}},\ne_{A_{\mathbb C}},\a_{A_{\mathbb C}})$. We have an equivalent description of the integrability condition given in
Remark \ref{complex1}. For any $z=x+iy\in A_{\mathbb C}$ we denote its conjugate by $\overline{z}=x-iy,\,\,x,y\in A$. Then the conjugation in $ A_{\mathbb C}$ is a complex antilinear, involutive automorphism of the complex vector space $A_{\mathbb C}$.

\begin{thm}\label{product-structure-subalgebra}
Let $(A,\nw,\ne,\alpha)$ be a real $3$-Hom-L-dendriform algebra. Then $(A,\nw,\ne,\alpha)$ has a complex product structure if and only if $A_{\mathbb C}$ admits a decomposition:
\begin{eqnarray}
A_{\mathbb C}=\mathfrak{q}\oplus\overline{\mathfrak{q}},
\end{eqnarray}
where $\mathfrak{q}$ is a complex subalgebras of $A_{\mathbb C}$.\end{thm}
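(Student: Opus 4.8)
The plan is to run the argument in exact parallel with the product-structure case treated above, replacing the real $\pm 1$-eigenspace splitting of an involution by the $\pm i$-eigenspace splitting of the complex structure on the complexification. Throughout I regard the complex structure $J$ of Definition \ref{complex} as the $\mathbb{C}$-linear operator it induces on $A_{\mathbb C}$ via $J(x+iy)=Jx+iJy$; since $J^2=-id$, this extension is diagonalizable with eigenvalues $\pm i$, and since $J$ is real it commutes with conjugation, so the two eigenspaces are exchanged by $z\mapsto\overline{z}$. The operations $\nw_{A_{\mathbb C}},\ne_{A_{\mathbb C}}$ are $\mathbb{C}$-trilinear and $\a_{A_{\mathbb C}}$ is $\mathbb{C}$-linear, which is what lets me reduce every verification to arguments drawn from the eigenspaces.

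For the forward direction I take $\mathfrak{q}=\{x-iJx\mid x\in A\}$, the $+i$-eigenspace of $J$; then $\overline{\mathfrak{q}}$ is the $-i$-eigenspace and $A_{\mathbb C}=\mathfrak{q}\oplus\overline{\mathfrak{q}}$. To see that $\mathfrak{q}$ is a subalgebra I substitute $Jz_k=iz_k$ for $z_1,z_2,z_3\in\mathfrak{q}$ into the integrability identity \eqref{product complex structure1}. Using $i^2=-1$ and $i^3=-i$, the seven terms on the right collapse to $4i\,\nw_{A_{\mathbb C}}(z_1,z_2,z_3)-3J\,\nw_{A_{\mathbb C}}(z_1,z_2,z_3)$, so \eqref{product complex structure1} reads $4J\,\nw_{A_{\mathbb C}}(z_1,z_2,z_3)=4i\,\nw_{A_{\mathbb C}}(z_1,z_2,z_3)$, i.e. $\nw_{A_{\mathbb C}}(z_1,z_2,z_3)\in\mathfrak{q}$; the same bookkeeping applied to \eqref{product complex structure2} gives $\ne_{A_{\mathbb C}}(z_1,z_2,z_3)\in\mathfrak{q}$, while $\a_{A_{\mathbb C}}(\mathfrak{q})\subseteq\mathfrak{q}$ is immediate from $J\a=\a J$. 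Hence $\mathfrak{q}$ is a complex subalgebra and the decomposition holds.

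For the converse I start from a complex subalgebra $\mathfrak{q}$ with $A_{\mathbb C}=\mathfrak{q}\oplus\overline{\mathfrak{q}}$ and define $J$ to act as $i$ on $\mathfrak{q}$ and as $-i$ on $\overline{\mathfrak{q}}$. Conjugation interchanges the summands, so $J$ commutes with it and descends to a real operator $J:A\to A$ with $J^2=-id_A$; the inclusion $\a_{A_{\mathbb C}}(\mathfrak{q})\subseteq\mathfrak{q}$ yields $J\a=\a J$. It remains to verify \eqref{product complex structure1}--\eqref{product complex structure2}, and since both operations are $\mathbb{C}$-trilinear and $\mathfrak{q}\oplus\overline{\mathfrak{q}}$ spans $A_{\mathbb C}$ it suffices to test the identities on arguments drawn from the two eigenspaces. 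Reversing the scalar computation above shows that the all-$\mathfrak{q}$ case of \eqref{product complex structure1} is equivalent to $\nw_{A_{\mathbb C}}(\mathfrak{q},\mathfrak{q},\mathfrak{q})\subseteq\mathfrak{q}$, which holds by hypothesis, and the all-$\overline{\mathfrak{q}}$ case follows by conjugation. The step I expect to require the most careful bookkeeping is the mixed case: when the three arguments are split between $\mathfrak{q}$ and $\overline{\mathfrak{q}}$, a direct count of the scalar factors in \eqref{product complex structure1} shows the coefficient of $\nw_{A_{\mathbb C}}$ on the right vanishes and the coefficient of $J\,\nw_{A_{\mathbb C}}$ equals $1$, so both sides reduce to $J\,\nw_{A_{\mathbb C}}$ and the identity holds automatically, irrespective of which summand the output occupies. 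With the pure and mixed cases settled, \eqref{product complex structure1}--\eqref{product complex structure2} hold on all of $A_{\mathbb C}$, hence on $A$, and $J$ is a complex structure, completing the equivalence.
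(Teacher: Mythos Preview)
Your proof is correct and follows essentially the same route as the paper: extend $J$ $\mathbb{C}$-linearly to $A_{\mathbb C}$, take $\mathfrak{q}$ to be the $+i$-eigenspace, and in each direction reduce the integrability identities \eqref{product complex structure1}--\eqref{product complex structure2} to the eigenvalue bookkeeping $4J\nw=4i\nw$ on pure inputs. Your treatment of the converse is in fact slightly more complete than the paper's, which dismisses the mixed-eigenspace inputs with a ``similarly'', whereas you observe (correctly) that for any $2{+}1$ split the scalar coefficients on the right of \eqref{product complex structure1} collapse to $0\cdot\nw+1\cdot J\nw$, so the identity is automatic there regardless of where the output lands.
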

\begin{proof}
We  extend the complex structure $J$ $\mathbb C$-linearly, which is denoted by $J_{\mathbb C}$, i.e. $J_{\mathbb C}: A_{\mathbb C}\longrightarrow  A_{\mathbb C}$ is defined as
\begin{equation}\label{eq:JC}
J_{\mathbb C}(x+iy)=Jx+iJy,\quad \forall x,y\in A.
\end{equation} Then $J_{\mathbb C}$ is a $\mathbb C$-linear endomorphism on $ A_{\mathbb C}$ satisfying $J_{\mathbb C}^2=-id_{A_{\mathbb C}}$, $J_{\mathbb C}\a_{\mathbb C}=\a_{\mathbb C}J_{\mathbb C}$ and the integrability conditions \eqref{product complex structure1} and \eqref{product complex structure2} on $ A_{\mathbb C}$. Denote by $ A_{\pm i}$ the corresponding eigenspaces of $ A_{\mathbb C}$ associated to the eigenvalues $\pm i$ and there holds:
   \begin{eqnarray*}
 A_{\mathbb C}= A_{i}\oplus A_{-i}.
\end{eqnarray*}
  It is straightforward to see that  $ A_{i}=\{x-iJx|x\in A\}$ and $ A_{-i}=\{x+iJx|x\in A\}$. Therefore, we have $ A_{-i}= \overline{A_{i}}$. It is easy to check that $\a_{\mathbb C}(A_i)\subset A_i$.
For all $X,Y,Z\in A_{i}$, we have
\begin{eqnarray*}
J_{\mathbb C}\nw_{ A_{\mathbb C}}(X,Y,Z)&=&-\nw_{ A_{\mathbb C}}(J_{\mathbb C}X,J_{\mathbb C}Y,J_{\mathbb C}Z)+\nw_{ A_{\mathbb C}}(J_{\mathbb C}X,Y,Z)+\nw_{ A_{\mathbb C}}(X,J_{\mathbb C}Y,Z)+\nw_{ A_{\mathbb C}}(X,Y,J_{\mathbb C}Z)\\
&&+J_{\mathbb C}\nw_{ A_{\mathbb C}}(J_{\mathbb C}X,J_{\mathbb C}Y,Z)+J_{\mathbb C}\nw_{ A_{\mathbb C}}(X,J_{\mathbb C}Y,J_{\mathbb C}Z)+J_{\mathbb C}\nw_{ A_{\mathbb C}}(J_{\mathbb C}X,Y,J_{\mathbb C}Z)\\
&=&4i\nw_{ A_{\mathbb C}}(X,Y,Z)-3J_{\mathbb C}\nw_{ A_{\mathbb C}}(X,Y,Z).
\end{eqnarray*}
Thus, we have $\nw_{ A_{\mathbb C}}(X,Y,Z)\in A_{i}$. Similarly, we can show that $\ne_{ A_{\mathbb C}}(X,Y,Z)\in A_{i}$, which implies that $ A_i$ is a subalgebra of $A_{\mathbb C}$.

Conversely, we define a complex linear endomorphism $J_{\mathbb C}: A_{\mathbb C}\to A_{\mathbb C}$ by
\begin{eqnarray}\label{defi-complex-structure}
J_{\mathbb C}(X+\overline{Y})=iX-i\overline{Y},\,\,\,\,\forall X,Y\in\mathfrak{q}.
\end{eqnarray}
Since the conjugation  is a $\mathbb C$-antilinear, involutive automorphism of $ A_{\mathbb C}$, we have
\begin{eqnarray*}
J_{\mathbb C}^2(X+\overline{Y})=J_{\mathbb C}(iX-i\overline{Y})=J_{\mathbb C}(iX+\overline{iY})=i(iX)-i\overline{iY}=-X-\overline{Y},
\end{eqnarray*}
then $J_{\mathbb C}^2=-id$. Moreover, for any $X,Y\in\mathfrak{q}$, we have
\begin{align*}
J_{\mathbb C}\a_{\mathbb C}(X+\overline{Y})&=J_{\mathbb C}(\a_{\mathbb C}(X)+\a_{\mathbb C}(\overline{Y}))=J_{\mathbb C}(\a_{\mathbb C}(X)+\overline{\a_{\mathbb C}(Y)})\\&=i\a_{\mathbb C}(X)-i\overline{\a_{\mathbb C}(Y)}=\a_{\mathbb C}(i X)-\a_{\mathbb C}(i\overline{Y})=\a_{\mathbb C}J_{\mathbb C}(X+\overline{Y}).\end{align*}
Since $\mathfrak{q}$ is a subalgebra of $ A_{\mathbb C}$ then,  for all $X,Y,Z\in\mathfrak{q}$, we have
\begin{eqnarray*}
&&-\nw_{ A_{\mathbb C}}(J_{\mathbb C}X,J_{\mathbb C}Y,J_{\mathbb C}Z)+\nw_{ A_{\mathbb C}}(J_{\mathbb C}X,Y,Z)+\nw_{ A_{\mathbb C}}(X,J_{\mathbb C}Y,Z)+\nw_{ A_{\mathbb C}}(X,Y,J_{\mathbb C}Z)\\
&&+J_{\mathbb C}\nw_{ A_{\mathbb C}}(J_{\mathbb C}X,J_{\mathbb C}Y,Z)+J_{\mathbb C}\nw_{ A_{\mathbb C}}(X,J_{\mathbb C}Y,J_{\mathbb C}Z)+J_{\mathbb C}\nw_{ A_{\mathbb C}}(J_{\mathbb C}X,Y,J_{\mathbb C}Z)\\
&=&4i\nw_{ A_{\mathbb C}}(X,Y,Z)-3J_{\mathbb C}\nw_{ A_{\mathbb C}}(X,Y,Z)=i\nw_{ A_{\mathbb C}}(X,Y,Z)\\
&=&J_{\mathbb C}\nw_{ A_{\mathbb C}}(X,Y,Z),
\end{eqnarray*}
which implies that $J_{\mathbb C}$ satisfies   \eqref{product complex structure1} for all $X,Y,Z\in\mathfrak{q}$. Using a similar computation, we can check that $J_{\mathbb C}$ satisfies   \eqref{product complex structure2} for any $X,Y,Z\in\mathfrak{q}$.  Similarly, we can show that $J_{\mathbb C}$ satisfies   \eqref{product complex structure1} and   \eqref{product complex structure2}, for all $\mathcal{X},\mathcal{Y},\mathcal{Z}\in A_{\mathbb C}$. Let $J\in gl( A)$ given by $$J\triangleq J_{\mathbb C}|_{ A}.$$
$J$ is well-defined.  Since $J_{\mathbb C}$  satisfies \eqref{product complex structure1}, \eqref{product complex structure2}, $J_{\mathbb C}^2=-id_{\mathbb C}$ and  $\a_{\mathbb C}J_{\mathbb C}=J_{\mathbb C}\a_{\mathbb C}$ on $ A_{\mathbb C}$ then $J$ is a complex structure on $ A$.
 \end{proof}
\begin{lem}
  Let $J$ be an almost complex structure on a real $3$-Hom-L-dendriform algebra $(A,\nw,\ne,\alpha)$ commuting with $\a$.
    If $J$ satisfies
     \begin{eqnarray}\label{abel-complex-0}
&&J\nw(x,y,z)=\nw(Jx,y,z)=\nw(x,y,Jz),\\&&\label{abel-complex-01} J\ne(x,y,z)=\ne(Jx,y,z)=\ne(x,Jy,z)=\ne(x,y,Jz),\ \forall \ x,y,z\in A.
\end{eqnarray}
Then $J$ is a complex structure on $A$ called a strict complex structure.
\end{lem}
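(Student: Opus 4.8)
The plan is to check that $J$ satisfies the two integrability identities \eqref{product complex structure1} and \eqref{product complex structure2} recorded in Remark \ref{complex1}: since $J^2=-\mathrm{id}_A$ and $J\a=\a J$ are assumed, establishing these two identities is exactly what is required for $J$ to be a Nijenhuis operator, hence a complex structure. So the whole proof reduces to a direct (scalar-counting) computation analogous to item 1 of the earlier proposition on strict product structures, now with the sign change coming from $J^2=-\mathrm{id}_A$.

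First I would treat \eqref{product complex structure1}. Fixing $x,y,z\in A$, I would reduce each of the seven terms on the right-hand side to a multiple of $J\nw(x,y,z)$ using \eqref{abel-complex-0}, the relation $J^2=-\mathrm{id}_A$, and the skew-symmetry \eqref{3-Hom-L-dendriform0} of $\nw$ in its first two slots. The three single-$J$ terms give $\nw(Jx,y,z)=\nw(x,y,Jz)=J\nw(x,y,z)$ directly from \eqref{abel-complex-0}, while for the middle one I would use \eqref{3-Hom-L-dendriform0} and then \eqref{abel-complex-0}: $\nw(x,Jy,z)=-\nw(Jy,x,z)=-J\nw(y,x,z)=J\nw(x,y,z)$. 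Each double-$J$ term $J\nw(Jx,Jy,z)$, $J\nw(x,Jy,Jz)$, $J\nw(Jx,y,Jz)$ collapses, after moving one $J$ out of a boundary slot via \eqref{abel-complex-0} (and, when needed, \eqref{3-Hom-L-dendriform0} to shift a $J$ out of the second slot), to $J\cdot J\nw(x,y,z)=-J\nw(x,y,z)$. Finally the triple-$J$ term gives $-\nw(Jx,Jy,Jz)=-J^2\nw(x,Jy,z)=\nw(x,Jy,z)=J\nw(x,y,z)$. Adding the contributions, the right-hand side of \eqref{product complex structure1} equals $(1+3-3)\,J\nw(x,y,z)=J\nw(x,y,z)$, as desired.

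Then I would carry out the same bookkeeping for \eqref{product complex structure2} using \eqref{abel-complex-01}. This case is slightly easier, because \eqref{abel-complex-01} already identifies $\ne(Jx,y,z)$, $\ne(x,Jy,z)$ and $\ne(x,y,Jz)$ all with $J\ne(x,y,z)$, so no appeal to the skew-symmetry of $\nw$ is required; the double-$J$ terms again become $-J\ne(x,y,z)$ and the triple-$J$ term becomes $+J\ne(x,y,z)$, so the right-hand side collapses to $J\ne(x,y,z)$. Together with $J^2=-\mathrm{id}_A$ and $J\a=\a J$, this shows $J$ is a Nijenhuis operator and hence a complex structure.

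I do not expect a genuine obstacle here; it is a counting argument. The only point that needs a moment's care is the behaviour of $\nw$ in its \emph{second} slot: unlike for $\ne$, the hypothesis \eqref{abel-complex-0} does not directly control $\nw(x,Jy,z)$ or $\nw(x,Jy,Jz)$, so one must invoke the skew-symmetry \eqref{3-Hom-L-dendriform0} to transfer the $J$ from the second argument to the first before applying \eqref{abel-complex-0}. Once that step is noted, the remaining manipulations are routine, and the other special complex structures (abelian, strong abelian, perfect) can be handled by the same template.
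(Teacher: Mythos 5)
Your proposal is correct and follows essentially the same route as the paper: verify the two integrability identities of Remark \ref{complex1} by a direct term-by-term reduction using the strictness hypotheses and $J^2=-\mathrm{id}_A$, which together with $J\alpha=\alpha J$ gives the Nijenhuis condition. The only difference is one of emphasis: the paper writes out the $\nearrow$-identity and dismisses the $\nwarrow$-case as similar, while you make explicit the one genuinely non-automatic point there, namely that the $J$ in the second slot of $\nwarrow$ must be moved via the skew-symmetry \eqref{3-Hom-L-dendriform0} before \eqref{abel-complex-0} applies.
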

\begin{proof}
  Using identity \eqref{abel-complex-01} and $J^2=-id_A$, we obtain
\begin{eqnarray*}
&&-\ne(Jx,Jy,Jz)+\ne(Jx,y,z)+\ne(x,Jy,z)+\ne(x,y,Jz)\\
&&+J\ne(Jx,Jy,z)+J\ne(x,Jy,Jz)+J\ne(Jx,y,Jz)\\
&=&-\ne(Jx,Jy,Jz)+J\ne(x,y,z)+\ne(x,Jy,z)+\ne(x,y,Jz)\\
&&+\ne(J^2x,Jy,z)+\ne(Jx,Jy,Jz)+\ne(J^2x,y,Jz)\\
&=&J\ne(x,y,z).
\end{eqnarray*}
Similarly, we can get \eqref{abel-complex-0}.
Thus, we obtain that $J$ is a complex structure on $A$.
\end{proof}
Let $J$ be a complex structure on $A$. Define   two new products $\nw_J,\ne_J:\otimes^3A\to A$ by
\begin{eqnarray}\label{J-bracket1}
&&\nw_J(x,y,z)\triangleq \frac{1}{4}(\nw(x,y,z)-\nw(x,Jy,Jz)-\nw(Jx,y,Jz)-\nw(Jx,Jy,z)),\\\label{J-bracket2}
&&\ne_J(x,y,z)\triangleq \frac{1}{4}(\ne(x,y,z)-\ne(x,Jy,Jz)-\ne(Jx,y,Jz)-\ne(Jx,Jy,z)),\,\,\,\,\forall x,y,z\in A.
\end{eqnarray}

\begin{pro}\label{subalgebra-iso}
Let $J$ be a complex structure on a real $3$-Hom-L-dendriform algebra $(A,\nw,\ne,\alpha)$ . Then the tuple $(A,\nw_J,\ne_J,\alpha)$ is a real $3$-Hom-L-dendriform algebra. Moreover, $J$ is a strict complex structure on  $(A,\nw_J,\ne_J,\alpha)$ and the corresponding complex $3$-Hom-L-dendriform algebra $(A,\nw_J,\ne_J,\alpha)$  is isomorphic to the complex $3$-Hom-L-dendriform algebra $\mathfrak{q}$ define in Theorem \ref{product-structure-subalgebra}.
\end{pro}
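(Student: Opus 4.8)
The plan is to identify $(A,\nw_J,\ne_J,\a)$ with the complex subalgebra $\mathfrak q=A_i\subset A_{\mathbb C}$ produced by Theorem \ref{product-structure-subalgebra}, via transport of structure. Regard $A$ as a complex vector space with $i\cdot x:=Jx$ (legitimate since $J^2=-\mathrm{id}_A$), and set
$$\varphi:A\longrightarrow A_{\mathbb C},\qquad \varphi(x)=\tfrac12\,(x-iJx).$$
First I would record the easy facts about $\varphi$: it is $\mathbb C$-linear for this complex structure, since $\varphi(Jx)=\tfrac12(Jx+ix)=i\varphi(x)$; it is injective and its image is exactly $\mathfrak q=A_i=\{x-iJx\mid x\in A\}$, so $\varphi:A\to\mathfrak q$ is a $\mathbb C$-linear isomorphism; and $\varphi\circ\a=\a_{\mathbb C}\circ\varphi$ because $J\a=\a J$.

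The core step is to show that $\varphi$ intertwines the operations, i.e. $\varphi(\nw_J(x,y,z))=\nw_{A_{\mathbb C}}(\varphi x,\varphi y,\varphi z)$ and $\varphi(\ne_J(x,y,z))=\ne_{A_{\mathbb C}}(\varphi x,\varphi y,\varphi z)$ for all $x,y,z\in A$. Here I would expand $\nw_{A_{\mathbb C}}(x-iJx,\,y-iJy,\,z-iJz)$ by $\mathbb C$-trilinearity into its $2^3$ summands and split into real and imaginary parts. The real part is, by the very definition \eqref{J-bracket1}, equal to $4\nw_J(x,y,z)$, while the imaginary part is
$$-\nw(x,y,Jz)-\nw(x,Jy,z)-\nw(Jx,y,z)+\nw(Jx,Jy,Jz),$$
which equals $-4J\nw_J(x,y,z)$ precisely upon invoking the integrability condition \eqref{product complex structure1} (rearranged so as to express $4J\nw_J$). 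Thus $\nw_{A_{\mathbb C}}(x-iJx,y-iJy,z-iJz)=4\big(\nw_J(x,y,z)-iJ\nw_J(x,y,z)\big)=8\,\varphi(\nw_J(x,y,z))$, and since each of the three arguments of $\varphi$ contributes a factor $\tfrac12$ we get $\nw_{A_{\mathbb C}}(\varphi x,\varphi y,\varphi z)=\tfrac18\cdot 8\,\varphi(\nw_J(x,y,z))=\varphi(\nw_J(x,y,z))$; the normalization $\tfrac12$ in $\varphi$ is calibrated against the $\tfrac14$ in \eqref{J-bracket1} exactly so this cubic scaling balances. The corresponding identity for $\ne_J$ is obtained identically from \eqref{product complex structure2}.

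With the intertwining in hand the conclusion follows formally. Since $\mathfrak q$ is a complex subalgebra of the complex $3$-Hom-L-dendriform algebra $A_{\mathbb C}$, it is itself a complex $3$-Hom-L-dendriform algebra; because $\varphi:A\to\mathfrak q$ is a $\mathbb C$-linear bijection carrying $\nw_J,\ne_J,\a$ to $\nw_{A_{\mathbb C}},\ne_{A_{\mathbb C}},\a_{\mathbb C}$, the operations $\nw_J,\ne_J,\a$ satisfy all the $3$-Hom-L-dendriform axioms, so $(A,\nw_J,\ne_J,\a)$ — viewed as a complex algebra via $J$ — is a complex $3$-Hom-L-dendriform algebra isomorphic to $\mathfrak q$, and restricting scalars it is in particular a real one. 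Finally, $\mathbb C$-trilinearity of $\nw_J$ and $\ne_J$ (inherited from $\mathfrak q$ through the $\mathbb C$-linear $\varphi$) is exactly the assertion that $J\nw_J(x,y,z)=\nw_J(Jx,y,z)=\nw_J(x,y,Jz)$ and $J\ne_J(x,y,z)=\ne_J(Jx,y,z)=\ne_J(x,Jy,z)=\ne_J(x,y,Jz)$, which together with $J^2=-\mathrm{id}_A$ and $J\a=\a J$ means $J$ is a strict complex structure on $(A,\nw_J,\ne_J,\a)$; alternatively this can be verified directly from \eqref{product complex structure1}--\eqref{product complex structure2} and $J^2=-\mathrm{id}_A$.

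The main obstacle is the bookkeeping in the intertwining step: one must expand the $\mathbb C$-trilinear triple products into all eight terms, track the powers of $-i$, and recognize that the imaginary part collapses onto $-4J\nw_J$ (resp. $-4J\ne_J$) only after using the full strength of the integrability conditions; everything else is routine transport of structure along an isomorphism.
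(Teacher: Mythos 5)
Your proposal is correct and follows essentially the same route as the paper: the same map $\varphi(x)=\tfrac12(x-iJx)$, the same eight-term expansion of $\nw_{A_{\mathbb C}}(\varphi x,\varphi y,\varphi z)$ whose imaginary part collapses via the integrability conditions \eqref{product complex structure1}--\eqref{product complex structure2}, and the same transport of structure from the subalgebra $\mathfrak{q}$ to obtain both the $3$-Hom-L-dendriform axioms for $(A,\nw_J,\ne_J,\alpha)$ and the isomorphism with $\mathfrak{q}$. The only cosmetic difference is that you deduce strictness of $J$ on $(A,\nw_J,\ne_J,\alpha)$ from $\mathbb{C}$-trilinearity inherited through the $\mathbb{C}$-linear $\varphi$ (which indeed implies the conditions \eqref{abel-complex-0}--\eqref{abel-complex-01}), whereas the paper verifies it by a direct computation; both are valid and equivalent in content.
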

\begin{proof}
We will first prove that   $(A,\nw_J,\ne_J,\alpha)$ is a real $3$-Hom-L-dendriform algebra. To minimize the computations, we use a new approach to do this.
Since $J$ is  complex structure on $A$. We can define  two map $\varphi:A\to\mathfrak{q}$  as following:
\begin{eqnarray*}
\varphi(x)=\frac{1}{2}(x-iJx).
\end{eqnarray*}
It is straightforward to deduce that $\varphi$ is complex linear isomorphism between complex vector spaces.
%

  Since $J\a=\a J $, then $\varphi\a=\a \varphi $. By \eqref{product complex structure1}, for all $x,y,z\in A$, we have
\begin{eqnarray}
\nonumber\nw(\varphi(x),\varphi(y),\varphi(z))&=&\frac{1}{8}\nw(x-iJx,y-iJy,z-iJz)\\
                                                  \nonumber &=&\frac{1}{8}(\nw(x,y,z)-\nw(x,Jy,Jz)-\nw(Jx,y,Jz)-\nw(Jx,Jy,z))\\
                                                   \nonumber&&-\frac{1}{8}i(\nw(x,y,Jz)+\nw(x,Jy,z)+\nw(Jx,y,z)-\nw(Jx,Jy,Jz))\\
                                                  \nonumber &=&\frac{1}{8}(\nw(x,y,z)-\nw(x,Jy,Jz)-\nw(Jx,y,Jz)-\nw(Jx,Jy,z))\\
                                                  \nonumber &&-\frac{1}{8}iJ(\nw(x,y,z)-\nw(x,Jy,Jz)-\nw(Jx,y,Jz)-\nw(Jx,Jy,z))\\
                                                   \label{eq:Jiso}&=&\varphi\nw_J(x,y,z).
\end{eqnarray}
Thus, we have $\nw_J(x,y,z)=\varphi^{-1}(\nw(\varphi(x),\varphi(y),\varphi(z))$. Similarly, we get $\ne_J(x,y,z)=\varphi^{-1}\ne(\varphi(x),\varphi(y),\varphi(z))$. Since $J $ is a complex structure, $\mathfrak{q}$ is a $3$-Hom-L-dendriform subalgebra. Hence $(A,\nw_J,\ne_J,\alpha)$ is a real $3$-Hom-L-dendriform algebra.

By \eqref{abel-complex-01}, for all $x,y,z\in A$, we have
\begin{eqnarray*}
J\nw_J(x,y,z)&=&\frac{1}{4}J(\nw(x,y,z)-\nw_J(x,Jy,Jz)-\nw_J(Jx,y,Jz)-\nw_J(Jx,Jy,z))\\
          &=&\frac{1}{4}(-\nw_J(Jx,Jy,Jz)+\nw_J(Jx,y,z)+\nw_J(x,Jy,z)+\nw_J(x,y,Jz))\\&=&\frac{1}{4}(-\nw_J(Jx,Jy,Jz)+\nw_J(Jx,y,z)-\nw_J(J^2x,Jy,z)-\nw_J(J^2x,y,Jz))\\
          &=&\nw_J(Jx,y,z),
\end{eqnarray*} Similarly, we can check that $$J\nw(x,y,z)=\nw(x,y,Jz)\ \textrm{and}\   J\ne(x,y,z)=\ne(Jx,y,z)=\ne(x,Jy,z)=\ne(x,y,Jz),\ \forall \ x,y,z\in A.$$
which implies that $J$ is a strict complex structure on  $(A,\nw_J,\ne_J,\alpha)$.
Identity \eqref{eq:Jiso} means that$(A,\nw_J,\ne_J,\alpha)$  is isomorphic to the complex $3$-Hom-L-dendriform algebra $\mathfrak{q}$.

\end{proof}

\begin{pro}
Let $J$ be a complex structure  on a real $3$-Hom-L-dendriform algebra $(A,\nw,\ne,\alpha)$. Then $J$ is a strict complex structure on   $(A,\nw,\ne,\alpha)$  if and only if $\nw_J=\nw$ and $\ne_J=\ne$.
\end{pro}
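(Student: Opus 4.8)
This one costs essentially nothing. Assume $\nw_J=\nw$ and $\ne_J=\ne$. Since $J$ is, by hypothesis, a complex structure on $(A,\nw,\ne,\alpha)$, Proposition~\ref{subalgebra-iso} applies and gives that $J$ is a strict complex structure on the $3$-Hom-L-dendriform algebra $(A,\nw_J,\ne_J,\alpha)$. But by assumption $(A,\nw_J,\ne_J,\alpha)=(A,\nw,\ne,\alpha)$, so $J$ is a strict complex structure on $(A,\nw,\ne,\alpha)$, and nothing more is needed.

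\textbf{The ``only if'' direction.} Here I would argue by a direct substitution. Assume $J$ is a strict complex structure on $(A,\nw,\ne,\alpha)$, i.e.\ the identities \eqref{abel-complex-0}--\eqref{abel-complex-01} hold, and plug these into the defining formulas \eqref{J-bracket1}--\eqref{J-bracket2} of $\nw_J$ and $\ne_J$. The point is first to record the ``double-$J$'' consequences of strictness: using $\nw(Jx,y,z)=J\nw(x,y,z)$ together with the skew-symmetry $\nw(x,y,z)=-\nw(y,x,z)$ one also obtains $\nw(x,Jy,z)=J\nw(x,y,z)$, and then applying such a relation in a second slot and using $J^2=-\mathrm{id}_A$ gives, for all $x,y,z\in A$, $\nw(x,Jy,Jz)=\nw(Jx,y,Jz)=\nw(Jx,Jy,z)=-\nw(x,y,z)$. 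The same reasoning from \eqref{abel-complex-01} (which already lists all three single-$J$ positions of $\ne$, so no skew-symmetry is needed there) yields $\ne(x,Jy,Jz)=\ne(Jx,y,Jz)=\ne(Jx,Jy,z)=-\ne(x,y,z)$. Feeding these four identities into \eqref{J-bracket1} makes $\nw_J(x,y,z)$ collapse to $\tfrac14\cdot 4\,\nw(x,y,z)=\nw(x,y,z)$, and \eqref{J-bracket2} similarly gives $\ne_J=\ne$.

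The argument is routine throughout. The ``if'' direction is purely a matter of invoking Proposition~\ref{subalgebra-iso} (a self-contained proof is also possible, but it must go through the integrability condition \eqref{product complex structure1}--\eqref{product complex structure2} and is longer). In the ``only if'' direction the single point to be careful about is the passage from the strictness relations to the double-$J$ identities $\nw(x,Jy,Jz)=-\nw(x,y,z)$ etc., which genuinely uses $J^2=-\mathrm{id}_A$ (and, for $\nw$, its skew-symmetry in the first two arguments); once those are in hand, the collapse of \eqref{J-bracket1} and \eqref{J-bracket2} is immediate and the only thing to watch is the signs.
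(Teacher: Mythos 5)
Your proof is correct. The ``only if'' half is essentially the paper's own computation: from the strictness identities \eqref{abel-complex-0}--\eqref{abel-complex-01} one derives $\nw(x,Jy,Jz)=\nw(Jx,y,Jz)=\nw(Jx,Jy,z)=-\nw(x,y,z)$ and the analogous identities for $\ne$, so that \eqref{J-bracket1}--\eqref{J-bracket2} collapse to $\nw_J=\nw$ and $\ne_J=\ne$; in fact you are more explicit than the paper, which never records that the second-slot relation $\nw(x,Jy,z)=J\nw(x,y,z)$ must be obtained from \eqref{abel-complex-0} together with the skew-symmetry \eqref{3-Hom-L-dendriform0}. Where you genuinely diverge is the ``if'' half: the paper argues directly, using $\nw_J=\nw$ to get $\nw(x,Jy,Jz)+\nw(Jx,y,Jz)+\nw(Jx,Jy,z)=-3\nw(x,y,z)$ and then feeding this into the integrability conditions \eqref{product complex structure1}--\eqref{product complex structure2} to conclude $J\nw(x,y,z)=\nw(Jx,y,z)$ and $J\ne(x,y,z)=\ne(Jx,y,z)$, whereas you simply invoke Proposition \ref{subalgebra-iso}, whose ``moreover'' clause states that $J$ is a strict complex structure on $(A,\nw_J,\ne_J,\alpha)$, and identify that algebra with $(A,\nw,\ne,\alpha)$ under the hypothesis $\nw_J=\nw$, $\ne_J=\ne$. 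Your route is shorter and, as a bonus, yields the full list of strictness identities in all slots at once, while the paper's direct computation exhibits only the first-slot identities and relegates the rest (e.g.\ $J\nw(x,y,z)=\nw(x,y,Jz)$) to a ``similarly''; the trade-off is that your argument is not self-contained, as it rests on the earlier proposition rather than only on the Nijenhuis condition defining a complex structure. There is no circularity, since Proposition \ref{subalgebra-iso} precedes this statement and its proof does not use it, so both arguments are legitimate.
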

\begin{proof} If $J$ is a strict complex structure on $(A,\nw,\ne,\alpha)$, by $J\nw(x,y,z)=\nw(Jx,y,z)=\nw(x,y,Jz)$, we have
\begin{eqnarray*}
\nw_J(x,y,z)=\frac{1}{4}(\nw(x,y,z)-\nw(x,Jy,Jz)-\nw(Jx,y,Jz)-\nw(Jx,Jy,z))=\nw(x,y,z).
\end{eqnarray*}
Similarly, we can check that $\ne_J(x,y,z)=\ne(x,y,z)$.

Conversely, if $\nw_J=\nw$ and $\ne_J=\ne$, we have
$$-3\nw(x,y,z)=\nw(x,Jy,Jz)+\nw(Jx,y,Jz)+\nw(Jx,Jy,z).$$
Then by  condition \eqref{product complex structure1}, we obtain
\begin{eqnarray*}
4J\nw_J(x,y,z)&=&-\nw(Jx,Jy,Jz)+\nw(Jx,y,z)+\nw(x,Jy,z)+\nw(x,y,Jz)\\
            &=&3\nw(Jx,y,z)+\nw(Jx,y,z)\\
            &=&4\nw(Jx,y,z),
\end{eqnarray*}
which implies that $J\nw(x,y,z)=\nw(Jx,y,z)$. Similarly, using \eqref{product complex structure2}, we obtain $J\ne(x,y,z)=\ne(Jx,y,z)$. The proof is finished.\end{proof}
\begin{lem}Let $J$ be an almost complex structure on a real $3$-Hom-L-dendriform algebra $(A,\nw,\ne,\alpha)$ commuting with $\a$.
      If $J$ satisfies
     \begin{eqnarray}\label{AbelianComplexStructure1}
&&\nw(x,y,z)=\nw(x,Jy,Jz)+\nw(Jx,y,Jz)+\nw(Jx,Jy,z),\\\label{AbelianComplexStructure2}&&\ne(x,y,z)=\ne(x,Jy,Jz)+\ne(Jx,y,Jz)+\ne(Jx,Jy,z),\ \forall \ x,y,z\in A.
\end{eqnarray}
Then $J$ is a complex structure on $A$ called an abelian  complex structure.
 \end{lem}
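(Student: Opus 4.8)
The plan is to verify directly that $J$ satisfies the two integrability conditions \eqref{product complex structure1} and \eqref{product complex structure2} from Remark \ref{complex1}. Since $J^2=-id_A$ and $J\a=\a J$ are already part of the hypotheses, establishing these two identities is exactly what is needed to conclude that $J$ is a Nijenhuis operator, hence a complex structure. So the whole proof reduces to a short bookkeeping computation, applying the abelian identities \eqref{AbelianComplexStructure1}-\eqref{AbelianComplexStructure2} in two different ways.

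First I would rewrite the right-hand side of \eqref{product complex structure1}, which reads
\begin{align*}
&-\nw(Jx,Jy,Jz)+\nw(Jx,y,z)+\nw(x,Jy,z)+\nw(x,y,Jz)\\
&\quad+J\nw(Jx,Jy,z)+J\nw(x,Jy,Jz)+J\nw(Jx,y,Jz).
\end{align*}
Applying $J$ to \eqref{AbelianComplexStructure1} shows that the three $J$-terms on the second line collapse to $J\nw(x,y,z)$. It then remains to see that the four terms on the first line cancel. For this I would substitute $Jx$ in place of $x$ in \eqref{AbelianComplexStructure1} and use $J^2=-id_A$, which gives
\begin{align*}
\nw(Jx,y,z)&=\nw(Jx,Jy,Jz)+\nw(J^2x,y,Jz)+\nw(J^2x,Jy,z)\\
&=\nw(Jx,Jy,Jz)-\nw(x,y,Jz)-\nw(x,Jy,z),
\end{align*}
so that $\nw(Jx,y,z)+\nw(x,Jy,z)+\nw(x,y,Jz)=\nw(Jx,Jy,Jz)$, and the first line vanishes. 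Hence the right-hand side of \eqref{product complex structure1} equals $J\nw(x,y,z)$, as desired.

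The verification of \eqref{product complex structure2} is word-for-word the same with $\nw$ replaced by $\ne$ and \eqref{AbelianComplexStructure1} replaced by \eqref{AbelianComplexStructure2}, so no new ideas are needed there. Since these are the only conditions to check, $J$ is a Nijenhuis operator on $(A,\nw,\ne,\alpha)$, hence a complex structure. I do not expect any genuine obstacle: the argument is entirely parallel to the treatment of the abelian product structure in the preceding Proposition, the only point worth flagging being that the sign $J^2=-id_A$ (rather than $E^2=id_A$) is precisely what forces the four ``uncorrected'' terms to cancel instead of recombining into $J\nw(x,y,z)$.
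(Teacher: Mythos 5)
Your proof is correct and follows essentially the same route as the paper: both verify the integrability conditions \eqref{product complex structure1}--\eqref{product complex structure2} directly by applying the abelian identities \eqref{AbelianComplexStructure1}--\eqref{AbelianComplexStructure2} twice (once under $J$ to collapse the three $J$-terms to $J\nw(x,y,z)$, once with a $J$-shifted argument together with $J^2=-id_A$ to cancel the remaining four terms). The only cosmetic difference is that the paper expands $-\nw(Jx,Jy,Jz)$ via the substitution $(Jx,Jy,Jz)$ whereas you expand $\nw(Jx,y,z)$ via the substitution $(Jx,y,z)$; the cancellation is the same.
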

 \begin{proof}
   By \eqref{AbelianComplexStructure1} and $J^2=-id_A$, we have
\begin{eqnarray*}
&&-\nw(Jx,Jy,Jz)+\nw(Jx,y,z)+\nw(x,Jy,z)+\nw(x,y,Jz)\\
&&+J\nw(Jx,Jy,z)+J\nw(x,Jy,Jz)+J\nw(Jx,y,Jz)\\
&=&-\nw(Jx,J^2y,J^2z)-\nw(J^2x,Jy,J^2z)-\nw(J^2x,J^2y,Jz)\\
&&+\nw(Jx,y,z)+\nw(x,Jy,z)+\nw(x,y,Jz)+J\nw(x,y,z)\\
&=&J\nw(x,y,z).
\end{eqnarray*}
Using  \eqref{AbelianComplexStructure2}, we can prove the condition \eqref{product complex structure2}. Thus, we obtain that $J$ is a complex structure on $A$.
 \end{proof}
 \begin{lem}Let $J$ be an almost complex structure on a real $3$-Hom-L-dendriform algebra $(A,\nw,\ne,\alpha)$ commuting with $\a$. If $J$ satisfies
     \begin{eqnarray}\label{StrongComplexStructure1}
&&\nw(x,y,z)=-J(\nw(x,y,Jz)+\nw(Jx,y,z)+\nw(x,Jy,z)),\\\label{StrongComplexStructure2}&&\ne(x,y,z)=-J(\ne(x,y,Jz)+\ne(Jx,y,z)+\ne(x,Jy,z)),\ \forall \ x,y,z\in A.
\end{eqnarray}
Then $J$ is a complex structure on $A$ called an strong  complex structure.
\end{lem}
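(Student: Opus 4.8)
By Remark~\ref{complex1} it is enough to verify that $J$ satisfies the two integrability conditions \eqref{product complex structure1} and \eqref{product complex structure2}; since $J^2=-id_A$ and $J\a=\a J$ hold by hypothesis, this will exhibit $J$ as a Nijenhuis operator and hence as a complex structure. The plan is to reduce the statement to the already-established case of an abelian complex structure. As the arguments for $\nw$ and $\ne$ are identical, I would carry out the reduction for $\nw$ and then repeat it verbatim for $\ne$ using \eqref{StrongComplexStructure2} in place of \eqref{StrongComplexStructure1}.

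First I would apply $J$ to both sides of \eqref{StrongComplexStructure1} and use $J^2=-id_A$ to extract the auxiliary identity
\begin{equation*}
J\nw(x,y,z)=\nw(Jx,y,z)+\nw(x,Jy,z)+\nw(x,y,Jz),\qquad\forall\,x,y,z\in A.
\end{equation*}
Next I would apply $J$ once more to this auxiliary identity, expand each of the three terms on the right by the same identity, and collect the nine resulting terms; using $J^2=-id_A$ the sum simplifies to
\begin{equation*}
\nw(x,y,z)=\nw(Jx,Jy,z)+\nw(Jx,y,Jz)+\nw(x,Jy,Jz),
\end{equation*}
which is exactly the abelian condition \eqref{AbelianComplexStructure1}; starting instead from \eqref{StrongComplexStructure2} the same computation gives \eqref{AbelianComplexStructure2}. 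Thus every strong complex structure is in particular an abelian complex structure, and the conclusion follows immediately from the lemma on abelian complex structures proved above.

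If a self-contained verification is preferred, I would instead substitute the two displayed identities directly into the right-hand side of \eqref{product complex structure1}: the three $J$-prefixed summands $J\nw(Jx,Jy,z)+J\nw(x,Jy,Jz)+J\nw(Jx,y,Jz)$ expand, via the auxiliary identity and $J^2=-id_A$, to $3\nw(Jx,Jy,Jz)-2\nw(Jx,y,z)-2\nw(x,Jy,z)-2\nw(x,y,Jz)$, and rewriting $\nw(Jx,Jy,Jz)$ through the abelian relation with $z$ replaced by $Jz$ collapses the entire right-hand side to $\nw(Jx,y,z)+\nw(x,Jy,z)+\nw(x,y,Jz)=J\nw(x,y,z)$; identity \eqref{product complex structure2} is handled the same way from \eqref{StrongComplexStructure2}. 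The only delicate point in either route is the sign bookkeeping produced by the repeated use of $J^2=-id_A$; once the passage from the strong condition to the abelian one is in place, no further obstacle remains.
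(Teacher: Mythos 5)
Your argument is correct, but it follows a genuinely different route from the paper. The paper verifies the integrability identity \eqref{product complex structure1} directly: it rewrites $-\nw(Jx,Jy,Jz)$ by applying \eqref{StrongComplexStructure1} at the triple $(Jx,Jy,Jz)$ and rewrites $\nw(Jx,y,z)+\nw(x,Jy,z)+\nw(x,y,Jz)$ as $J\nw(x,y,z)$ by applying $J$ to \eqref{StrongComplexStructure1} at $(x,y,z)$, after which the six auxiliary terms cancel pairwise and only $J\nw(x,y,z)$ survives; the same two substitutions handle \eqref{product complex structure2}. You instead iterate your auxiliary identity $J\nw(x,y,z)=\nw(Jx,y,z)+\nw(x,Jy,z)+\nw(x,y,Jz)$ twice and, after cancelling a factor of $2$, deduce the abelian conditions \eqref{AbelianComplexStructure1}--\eqref{AbelianComplexStructure2}, then invoke the abelian-complex-structure lemma already proved; I checked the coefficient bookkeeping in both of your displayed reductions (including the expansion $3\nw(Jx,Jy,Jz)-2\nw(Jx,y,z)-2\nw(x,Jy,z)-2\nw(x,y,Jz)$ and the identity $\nw(Jx,Jy,Jz)=J\nw(x,y,z)$ implicit in your collapse step), and it is sound. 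Your reduction buys a nontrivial byproduct, namely that every strong complex structure is automatically abelian, which echoes the ``strong abelian'' terminology in the $3$-Lie algebra literature; the price is the division by $2$, harmless here since the algebra is real (characteristic $0$), whereas the paper's two-substitution computation is shorter, self-contained, and needs no such division.
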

\begin{proof}
  By \eqref{StrongComplexStructure1} and $J^2=-id_A$, we have
\begin{eqnarray*}
&&-\nw(Jx,Jy,Jz)+\nw(Jx,y,z)+\nw(x,Jy,z)+\nw(x,y,Jz)\\
&&+J\nw(Jx,Jy,z)+J\nw(x,Jy,Jz)+J\nw(Jx,y,Jz)\\
&=&J\nw(J^2x,Jy,Jz)+J\nw(Jx,J^2y,Jz)+J\nw(Jx,Jy,J^2z)+J\nw(x,y,z)\\
&&+J\nw(Jx,Jy,z)+J\nw(x,Jy,Jz)+J\nw(Jx,y,Jz)\\
&=&J\nw(x,y,z).
\end{eqnarray*}
Using  \eqref{StrongComplexStructure2}, we can prove the condition \eqref{product complex structure2}.
Thus, $J$ is a complex structure on $A$.
\end{proof}
\begin{lem} Let $J$ be an almost complex structure on a real $3$-Hom-L-dendriform algebra $(A,\nw,\ne,\alpha)$ commuting wilth $\a$. If $J$ satisfies
     \begin{eqnarray}\label{PerfectComplexStructure}
J\nw(x,y,z)=-\nw(Jx,Jy,Jz),\ J\ne(x,y,z)=-\ne(Jx,Jy,Jz),\ \forall \ x,y,z\in A.
\end{eqnarray}
Then $E$ is a complex structure on $A$ called a perfect complex structure.
   \end{lem}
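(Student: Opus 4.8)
The plan is to verify that an almost complex structure $J$ satisfying \eqref{PerfectComplexStructure} fulfils the two integrability identities \eqref{product complex structure1} and \eqref{product complex structure2} of Remark \ref{complex1}; since the hypotheses already supply $J^{2}=-id_A$ and $J\a=\a J$, this is exactly what is needed for $J$ to be a complex structure in the sense of Definition \ref{complex}. Both identities are proved by the same manipulation, so I would write it out for $\nw$ and note that the case of $\ne$ is obtained verbatim.

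First I would take the right-hand side of \eqref{product complex structure1} and push each outer $J$ through its trilinear argument by means of \eqref{PerfectComplexStructure}, then collapse the resulting inner $J^{2}$'s using $J^{2}=-id_A$. Applying $J\nw(a,b,c)=-\nw(Ja,Jb,Jc)$ to the triples $(Jx,Jy,z)$, $(x,Jy,Jz)$ and $(Jx,y,Jz)$ yields
\begin{align*}
J\nw(Jx,Jy,z)&=-\nw(J^{2}x,J^{2}y,Jz)=-\nw(x,y,Jz),\\
J\nw(x,Jy,Jz)&=-\nw(Jx,J^{2}y,J^{2}z)=-\nw(Jx,y,z),\\
J\nw(Jx,y,Jz)&=-\nw(J^{2}x,Jy,J^{2}z)=-\nw(x,Jy,z),
\end{align*}
so these three contributions cancel the three single-$J$ terms $\nw(Jx,y,z)+\nw(x,Jy,z)+\nw(x,y,Jz)$ on the right-hand side, leaving only $-\nw(Jx,Jy,Jz)$, which equals $J\nw(x,y,z)$ once more by \eqref{PerfectComplexStructure}. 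Hence the right-hand side of \eqref{product complex structure1} collapses to $J\nw(x,y,z)$, that is, \eqref{product complex structure1} holds; repeating the same computation with $\ne$ in place of $\nw$, using the second equality in \eqref{PerfectComplexStructure}, gives \eqref{product complex structure2}.

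I do not anticipate a genuine obstacle, since the whole argument is a formal substitution; the only point calling for care is the sign bookkeeping — each time a factor of $J$ is moved past an argument of $\nw$ or $\ne$ it contributes a minus sign, and two factors of $J^{2}=-id_A$ sitting in one slot cancel, which is precisely what produces the cancellations above. It would also be natural to record here, in parallel with the product case, that a strict complex structure is automatically a perfect one: from \eqref{abel-complex-0} together with the skew-symmetry $\nw(x,y,z)=-\nw(y,x,z)$ one deduces $\nw(x,Jy,z)=J\nw(x,y,z)$, and then iterating the three slot-by-slot relations gives $\nw(Jx,Jy,Jz)=J^{3}\nw(x,y,z)=-J\nw(x,y,z)$, which is \eqref{PerfectComplexStructure}; the same argument applies to $\ne$ via \eqref{abel-complex-01}.
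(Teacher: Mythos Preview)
Your proof is correct and follows essentially the same route as the paper's own argument: both compute the right-hand side of \eqref{product complex structure1}, apply \eqref{PerfectComplexStructure} to the three terms of the form $J\nw(\cdot,\cdot,\cdot)$ with two interior $J$'s (the pair of $J^{2}$ factors cancel since $J^{2}=-id_A$), observe that these cancel the three single-$J$ terms, and identify the remaining $-\nw(Jx,Jy,Jz)$ with $J\nw(x,y,z)$; the $\ne$ case is identical. Your additional remark that a strict complex structure is perfect is not in the paper's proof of this lemma, but it is the complex analogue of the paper's earlier Remark following the product-structure proposition and is correct as stated.
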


\begin{proof}
  By \eqref{PerfectComplexStructure} and $J^2=-id_A$, we have
\begin{eqnarray*}
&&-\nw(Jx,Jy,Jz)+\nw(Jx,y,z)+\nw(x,Jy,z)+\nw(x,y,Jz)\\
&&+J\nw(Jx,Jy,z)+J\nw(x,Jy,Jz)+J\nw(Jx,y,Jz)\\
&=&J\nw(x,y,z)+\nw(Jx,y,z)+\nw(x,Jy,z)+\nw(x,y,Jz)\\
&&-\nw(J^2x,J^2y,Jz)-\nw(Jx,J^2y,J^2z)-\nw(J^2x,Jy,J^2z)\\
&=&J\nw(x,y,z)
\end{eqnarray*} and \begin{eqnarray*}
&&-\ne(Jx,Jy,Jz)+\ne(Jx,y,z)+\ne(x,Jy,z)+\ne(x,y,Jz)\\
&&+J\ne(Jx,Jy,z)+J\ne(x,Jy,Jz)+J\ne(Jx,y,Jz)\\
&=&J\ne(x,y,z)+\ne(Jx,y,z)+\ne(x,Jy,z)+\ne(x,y,Jz)\\
&&-\ne(J^2x,J^2y,Jz)-\ne(Jx,J^2y,J^2z)-\ne(J^2x,Jy,J^2z)\\
&=&J\ne(x,y,z).
\end{eqnarray*}
Thus, $J$ is a complex structure on $A$.
\end{proof}
The following result  illustrates the relation between a complex structure and a product structure on a complex $3$-Hom-L-dendriform algebra.

\begin{pro}\label{equivalent}
Let $(A,\nw,\ne,\alpha)$ be a complex $3$-Hom-L-dendriform algebra. Then $E$ is a product structure on $A$ if and only if $J=iE$ is a complex structure on $A$.
\end{pro}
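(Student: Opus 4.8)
The plan is to verify that $J = iE$ satisfies exactly the defining conditions of a complex structure on a complex $3$-Hom-L-dendriform algebra, i.e. the equations of Remark \ref{complex1}, given that $E$ satisfies the defining conditions of a product structure, i.e. the equations of the Remark preceding Theorem \ref{product-structure-subalgebra} (with $E^2 = id_A$, $E\a = \a E$, and the two multiplicativity identities \eqref{product structure1} and \eqref{product structure2}). Since we are on a \emph{complex} $3$-Hom-L-dendriform algebra, $J = iE$ is a legitimate $\mathbb C$-linear endomorphism of $A$. The argument is entirely symmetric in $\nw$ and $\ne$, so I would carry it out for $\nw$ and then remark that the computation for $\ne$ is identical.

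First I would record the easy algebraic facts: $J^2 = (iE)^2 = i^2 E^2 = -\,\mathrm{id}_A$, so $J$ is an almost complex structure; and $J\a = iE\a = i\a E = \a J$, so $J$ commutes with $\a$. This establishes everything except the integrability identities \eqref{product complex structure1}--\eqref{product complex structure2}.

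Next I would substitute $E = -iJ$ (equivalently $J = iE$) into the product-structure identity \eqref{product structure1} and track the powers of $i$ term by term. Writing $\nw(Ex,Ey,Ez) = \nw(-iJx,-iJy,-iJz) = -i^3\,\nw(Jx,Jy,Jz) = i\,\nw(Jx,Jy,Jz)$; each term with a single $E$ inside contributes a factor $-i$; each term of the form $-E\nw(Ex,Ey,z)$ contributes $-(-i)(-i)(-i) = -i^3\cdot(-1)\cdots$ — more carefully, $-E\nw(Ex,Ey,z) = -(-iJ)\nw(-iJx,-iJy,z) = iJ\cdot i^2\nw(Jx,Jy,z)$-type bookkeeping — and the left side $E\nw(x,y,z) = -iJ\nw(x,y,z)$. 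Multiplying the whole resulting identity through by $i$ (using $i^2 = -1$) should convert \eqref{product structure1} verbatim into \eqref{product complex structure1}: the pure term $\nw(Ex,Ey,Ez)$ becomes $-\nw(Jx,Jy,Jz)$, the single-$E$ terms become the plain $\nw(Jx,y,z)$, $\nw(x,Jy,z)$, $\nw(x,y,Jz)$ terms, and the $-E\nw(E\cdot,E\cdot,\cdot)$ terms become $+J\nw(J\cdot,J\cdot,\cdot)$ terms. I would then do the identical substitution into \eqref{product structure2} to obtain \eqref{product complex structure2}. The converse direction is the same computation read backwards: starting from a complex structure $J$ on the complex algebra $A$, set $E = -iJ$, check $E^2 = -i^2 J^2 = -(-1)(-1)\cdot$... wait, $E^2 = (-iJ)^2 = i^2 J^2 = (-1)(-1) = \mathrm{id}_A$, and $E\a = \a E$, and run the power-of-$i$ bookkeeping in reverse to recover \eqref{product structure1}--\eqref{product structure2}.

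The only real point requiring care — and hence the main obstacle — is the sign/phase bookkeeping: one must make sure that the coefficient $i^{k}$ picked up from a term carrying $k$ copies of $E$ (or $J$), including the outer $E$ (resp. $J$) applied to the whole bracket, matches across the two identities after multiplying through by a single overall power of $i$. A convenient way to make this transparent, which I would adopt, is to note that each monomial in \eqref{product structure1} has a total of either $3$ or $0$ \emph{internal} $E$'s plus possibly one \emph{external} $E$, so modulo signs every term acquires a well-defined power of $i$ under $E \mapsto iE$, and the identity is homogeneous of the right weight so that one global factor of $i$ cleans it up. Once this is observed the verification is mechanical, and I would simply state that the remaining identity for $\ne$ follows by the same argument.
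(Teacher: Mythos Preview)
Your approach is correct and is exactly what the paper does: multiply the product-structure identity \eqref{product structure1} by $i$, use $\mathbb C$-multilinearity of $\nw$ to absorb the scalar factors, and rewrite $iE$ as $J$, obtaining \eqref{product complex structure1} (and likewise for $\ne$); the converse is the same computation with $E=-iJ$. Your closing homogeneity remark is slightly miscounted (the terms carry a total of either one or three copies of $E$, not ``$3$ or $0$ internal plus possibly one external''), but this is harmless since your explicit term-by-term bookkeeping already establishes the result.
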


\begin{proof} Let $E$ be a product structure on $A$. We have $J^2=i^2E^2=-id_A.$ Thus, $J$ is an almost complex structure on $A$. Since $E$ satisfies the  condition \eqref{product structure1}, we have
\begin{eqnarray*}
J\nw(x,y,z)&=&iE\nw(x,y,z)\\
                 &=&-\nw(iEx,iEy,iEz)+\nw(iEx,y,z)+\nw(x,iEy,z)+\nw(x,y,iEz)\\
                 &&+iE\nw(iEx,iEy,z)+iE\nw(x,iEy,iEz)+iE\nw(iEx,y,iEz)\\
                 &=&-\nw(Jx,Jy,Jz)+\nw(Jx,y,z)+\nw(x,Jy,z)+\nw(x,y,Jz)\\
                 &&+J\nw(Jx,Jy,z)+J\nw(x,Jy,Jz)+J\nw(Jx,y,Jz).
\end{eqnarray*}
Similarly, using  \eqref{product structure2} we can check that
\begin{eqnarray*}
J\ne(x,y,z)        &=&-\ne(Jx,Jy,Jz)+\ne(Jx,y,z)+\ne(x,Jy,z)+\ne(x,y,Jz)\\
                 &&+J\ne(Jx,Jy,z)+J\ne(x,Jy,Jz)+J\ne(Jx,y,Jz).
\end{eqnarray*} Thus, $J$ is a complex structure on the complex $3$-Hom-L-dendriform algebra $A$.

The converse part can be proved similarly and we omit details.\end{proof}


\end{document}